	\def\MR#1{}
\newcommand{\kk}{\mathbb{k}}
\newcommand{\fP}{\mathfrak{P}}
\newcommand{\ZZ}{{\normalfont\mathbb{Z}}}
\newcommand{\sT}{\mathscr{T}}
\newcommand{\sX}{\mathscr{X}}
\newcommand{\sM}{\mathscr{M}}
\newcommand{\sF}{\mathscr{F}}
\newcommand{\PP}{{\normalfont\mathbb{P}}}
\newcommand{\degIm}{{\normalfont\text{degIm}}}
\newcommand{\mm}{{\normalfont\mathfrak{m}}}
\newcommand{\SSS}{\mathbb{S}}
\newcommand{\length}{{\rm length}}
\newcommand{\QQ}{\mathbb{Q}}
\newcommand{\pp}{\mathfrak{p}}
\newcommand{\aaa}{\mathfrak{a}}
\newcommand{\bb}{\mathfrak{b}}
\newcommand{\qqq}{\mathfrak{q}}
\newcommand{\ttt}{{\normalfont\textbf{t}}}
\newcommand{\nn}{{\mathfrak{N}}}
\newcommand{\nnn}{\mathfrak{n}}
\newcommand{\bn}{{\normalfont\mathbf{n}}}
\newcommand{\bm}{{\normalfont\mathbf{m}}}
\newcommand{\ee}{{\normalfont\mathbf{e}}}
\newcommand{\Ker}{\normalfont\text{Ker}}
\newcommand{\Quot}{{\normalfont\text{Quot}}}
\newcommand{\IM}{\normalfont\text{Im}}
\newcommand{\HT}{\normalfont\text{ht}}
\newcommand{\Ann}{\normalfont\text{Ann}}
\newcommand{\Supp}{\normalfont\text{Supp}}
\newcommand{\Ass}{\normalfont\text{Ass}}
\newcommand{\Rees}{\mathscr{R}}
\newcommand{\OO}{\mathcal{O}}
\newcommand{\LL}{\mathbb{L}}
\newcommand{\FF}{\mathcal{F}}
\newcommand{\HH}{\normalfont\text{H}}
\newcommand{\gr}{\normalfont\text{gr}}
\newcommand{\iniTerm}{\normalfont\text{in}}
\newcommand{\Proj}{\normalfont\text{Proj}}
\newcommand{\Spec}{{\normalfont\text{Spec}}}
\newcommand{\sG}{\mathscr{G}}
\newcommand{\multProj}{\normalfont\text{MultiProj}}
\newcommand{\biProj}{{\normalfont\text{BiProj}}}
\newtheorem{theorem}{Theorem}[section]
\newtheorem{headthm}{Theorem}
\newaliascnt{headcor}{headthm}
\newaliascnt{headconj}{headthm}
\newaliascnt{corollary}{theorem}
\newtheorem{corollary}[corollary]{Corollary}
\newaliascnt{claim}{theorem}
\newaliascnt{lemma}{theorem}
\newtheorem{lemma}[lemma]{Lemma}
\newaliascnt{conjecture}{theorem}
\newaliascnt{proposition}{theorem}
\newtheorem{proposition}[proposition]{Proposition}
\theoremstyle{definition}
\newaliascnt{definition}{theorem}
\newtheorem{definition}[definition]{Definition}
\newaliascnt{notation}{theorem}
\newtheorem{notation}[notation]{Notation}
\newaliascnt{example}{theorem}
\newtheorem{example}[example]{Example}
\newaliascnt{examples}{theorem}
\newaliascnt{remark}{theorem}
\newtheorem{remark}[remark]{Remark}
\newaliascnt{question}{theorem}
\newaliascnt{questions}{theorem}
\newaliascnt{problem}{theorem}
\newaliascnt{construction}{theorem}
\newaliascnt{setup}{theorem}
\newtheorem{setup}[setup]{Setup}
\newaliascnt{algorithm}{theorem}
\newaliascnt{observation}{theorem}
\newaliascnt{defprop}{theorem}
\newtheorem{defprop}[defprop]{Definition-Proposition}
\DeclareFontFamily{OT1}{pzc}{}
\DeclareFontShape{OT1}{pzc}{m}{it}{<-> s * [1.100] pzcmi7t}{}
\DeclareMathAlphabet{\mathchanc}{OT1}{pzc}{m}{it}
\def\equationautorefname~#1\null{(#1)\null}
\def\sectionautorefname~#1\null{Section #1\null}
\def\subsectionautorefname~#1\null{\S #1\null}
\def\trdeg{{\rm trdeg}}
\def\surjects{\twoheadrightarrow}
\begin{document}
	
	\title{Multidegrees, families, and integral dependence}

	\author{Yairon Cid-Ruiz}
	\address{Department of Mathematics, North Carolina State University, Raleigh, NC 27695, USA}
	\email{ycidrui@ncsu.edu}
	
	\author{Claudia Polini}
	\address{Department of Mathematics, University of Notre Dame, Notre Dame, IN 46556, USA}
	\email{cpolini@nd.edu}
	
	\author{Bernd Ulrich}
	\address{Department of Mathematics, Purdue University, West Lafayette, IN 47907, USA}
	\email{bulrich@purdue.edu}
	
	\date{\today}
	\keywords{multidegrees, families, integral dependence, mixed multiplicities, polar multiplicities, Segre numbers, rational maps, projective degrees, specialization}
	\subjclass[2020]{13H15, 14C17, 13B22, 13D40, 13A30}
	
	\maketitle

	\begin{abstract}
		We study the behavior of multidegrees in families and the existence of numerical criteria to detect integral dependence. 
		We show that mixed multiplicities of modules are upper semicontinuous functions when taking fibers and that projective degrees of rational maps are lower semicontinuous under specialization.
		We investigate various aspects of the polar multiplicities and Segre numbers of an ideal and introduce a new invariant that we call polar-Segre multiplicities. 
		In terms of polar multiplicities and our new invariants, we provide a new integral dependence criterion for certain families of ideals. 
		By giving specific examples, we show that the Segre numbers are the only invariants among the ones we consider that can detect integral dependence.
		Finally,  we generalize the result of Gaffney and Gassler regarding the lexicographic upper semicontinuity of Segre numbers.
	\end{abstract}

	\section{Introduction}
	
	This paper is concerned with \emph{the behavior of multidegrees in families} and with \emph{the search for criteria to detect integral dependence}. 
	Although these two themes are not typically studied together, the backbone of our work is the delicate interplay between them.
	Multidegrees provide the natural generalization of the degree of a projective variety to a multiprojective setting, and their study goes back to classical work of van der Waerden \cite{VAN_DER_WAERDEN}.
	The notion of multidegrees (or mixed multiplicities) has become of importance in several areas of mathematics (e.g., algebraic geometry, commutative algebra and combinatorics; see \cites{Bhattacharya,VERMA_BIGRAD,HERMANN_MULTIGRAD,Trung2001,MS, EXPONENTIAL_VARIETIES,KNUTSON_MILLER_SCHUBERT, POSITIVITY, CDNG_MINORS, CDNG_CS_IDEALS,  Huh12, LORENTZIAN, CS_PAPER, TRUNG_VERMA,  KNUTSON_MILLER_YONG,STURMFELS_UHLER}).
	On the other hand, the idea of detecting integral dependence with numerical invariants was initiated with seminal work of Rees \cite{REES}.
	Considerable effort has been made to extend Rees' theorem to the case of arbitrary ideals, modules, and, more generally, algebras (see \cites{KLEIMAN_THORUP_GEOM,KLEIMAN_THORUP_MIXED, UV_NUM_CRIT, SUV_MULT, UV_CRIT_MOD, BOEGER, FLENNER_MANARESI, GAFFNEY, GG, PTUV, cidruiz2024polar}). 
	
	Teissier's Principle of Specialization of Integral Dependence (PSID) can be seen as the first indication of a fruitful connection between the behavior of multiplicities in families and the detection of integral dependence (see \cite{TEISSIER_CYC}, \cite[Appendice I]{TEISSIER_RES2}).
	Indeed, in an analytic setting, the original PSID states that for a family of zero-dimensional ideals with constant Hilbert-Samuel multiplicity, a section is integrally dependent on the total family if and only if it is integrally dependent on the fibers corresponding to a Zariski-open dense subset of the base. 
	For families of not necessarily zero-dimensional ideals (also in an analytic setting), the PSID was extended by Gaffney and Gassler \cite{GG} using Segre numbers.
	This paper continues the line of research traced by the aforementioned works of Teissier, Gaffney and Gassler.
	
	\smallskip
	
	We now describe the results of this paper more precisely. 
	
	\smallskip
	
	\noindent
	1.1.
	\textbf{The behavior of multidegrees and projective degrees of rational maps in families.}
	\smallskip
	
	Let $\kk$ be a field and $T$ be a finitely generated standard $\ZZ_{\ge 0}^p$-graded $\kk$-algebra.
	Let $X = \multProj(T)$ be the corresponding multiprojective scheme embedded in a product of projective spaces $\PP = \PP_{\kk}^{m_1} \times_\kk \cdots \times_\kk \PP_{\kk}^{m_p}$.
	For each $\bn = (n_1,\ldots,n_p) \in \ZZ_{\ge0}^p$ with $|\bn|=n_1+\cdots+n_p=\dim(X)$, one denotes by $\deg_\PP^\bn(X)$ the \emph{multidegree of $X$ of type $\bn$ with respect to $\PP$}.
	In classical geometric terms, if $\kk$ is algebraically closed, then $\deg_\PP^\bn(X)$ is equal to the number of points (counted with multiplicities) in the intersection of $X$ with the product $L_1 \times_\kk \cdots \times_\kk L_p  \subset \PP$, where $L_i \subseteq \PP_\kk^{m_i}$ is a general linear subspace of dimension $m_i-n_i$ for each $1 \le i \le p$.
	More generally, given a finitely generated $\ZZ^p$-graded $T$-module $M$, one denotes by $e(\bn;M)$ the \emph{mixed multiplicity of $M$ of type $\bn$}, for each $\bn \in \ZZ_{\ge 0}^p$ with $|\bn|=\dim(\Supp_{++}(M))$.
	Let $\Psi : \PP_\kk^r \dashrightarrow \PP_\kk^s$ be a rational map and $\Gamma \subset \PP_\kk^r \times_\kk \PP_\kk^s$ be the closure of the graph of $\FF$.
	For each $0 \le i \le r$, the $i$-th \textit{projective degree} of $\Psi:\PP_\kk^r \dashrightarrow \PP_\kk^s$ is given by the following multidegree
	$$
	d_i(\Psi) \,=\, \deg_{\PP_\kk^r \times_\kk \PP_\kk^s}^{i,r-i}\left(\Gamma\right).
	$$
	For more details on these notions, see \autoref{sect_prelim}.
	
	\smallskip
	
	In \autoref{sect_mixed_mult}, we study the behavior of mixed multiplicities under the process of taking fibers with respect to a base ring.
	The idea of studying the multiplicities of families is now classical (see, e.g., \cite{LIPMAN_EQUIMULT}, \cite[Chapter 5]{COX_KPU}), but the case of mixed multiplicities does not seem to have been considered before.
	Let $A$ be a Noetherian ring and $\sT$ be a finitely generated standard $\ZZ_{\ge 0}^p$-graded $A$-algebra.
	Denote by $\sX = \multProj(\sT)$ the corresponding multiprojective scheme.		
	Let $\sM$ be a finitely generated $\ZZ^p$-graded $\sT$-module.
	For each $\pp \in \Spec(A)$, let $\kappa(\pp) = A_\pp/\pp A_\pp$ be the residue field of $\pp$, and consider the finitely generated $\ZZ^p$-graded module $\sM \otimes_A \kappa(\pp)$ over the finitely generated standard $\ZZ_{\ge 0}^p$-graded $\kappa(\pp)$-algebra $\sT(\pp) = \sT \otimes_A \kappa(\pp)$.
	Then, for all $\bn \in \ZZ_{\ge 0}^p$, we introduce a function 
	$$
	e_\bn^\sM  \,:\, \Spec(A) \rightarrow \ZZ \cup \{\infty\}
	$$
	that naturally measures the mixed multiplicities of the fibers $\sM \otimes_A \kappa(\pp)$ (see \autoref{def_mixed_mult_fib}).
	Our first main result is the following.

	\begin{headthm}[\autoref{thm_mixed_mult_fib}]
		For all $\bn \in \ZZ_{\ge 0}^p$, the function $e_\bn^\sM : \Spec(A) \rightarrow \ZZ \cup \{\infty\}$ is upper semicontinuous.
	\end{headthm}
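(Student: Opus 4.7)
The plan is to combine generic freeness with Noetherian induction to obtain constructibility of $e_\bn^\sM$, and separately to verify the specialization inequality $e_\bn^\sM(\qqq) \ge e_\bn^\sM(\pp)$ whenever $\pp \subset \qqq$ in $\Spec(A)$. These two ingredients together imply upper semicontinuity in the Zariski topology.

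For constructibility I would first reduce to the case where $A$ is a Noetherian integral domain by treating each irreducible component of $\Spec(A)$ separately. On such a component with generic point $\eta$, Grothendieck's generic freeness, applied to the finitely generated $\ZZ^p$-graded $\sT$-module $\sM$, yields $f \in A \setminus \{0\}$ such that each graded piece $(\sM_\bn)_f$ is finitely generated projective (in fact free, up to further localization) over $A_f$. Consequently, for every $\pp \in D(f)$ the fiber $\sM(\pp)$ has the same multigraded Hilbert function, and hence the same mixed multiplicities, as $\sM(\eta)$, so $e_\bn^\sM$ is constant on $D(f)$. Noetherian induction on the closed complement $V(f)$ then produces a finite stratification of $\Spec(A)$ by locally closed subsets on each of which $e_\bn^\sM$ is constant.

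For the specialization inequality I would replace $A$ by $A_\qqq/\pp A_\qqq$, a Noetherian local domain with fraction field $K = \kappa(\pp)$ and residue field $\kappa = \kappa(\qqq)$. Since each $\sM_\bn$ is finitely generated as an $A$-module (because $\sT$ is standard multigraded with $\sT_{\mathbf{0}} = A$ and $\sM$ is finitely generated over $\sT$), the classical upper semicontinuity of fiber dimensions gives
\[
\dim_\kappa (\sM_\bn \otimes_A \kappa) \;\ge\; \dim_K (\sM_\bn \otimes_A K) \quad \text{for every } \bn \in \ZZ^p,
\]
i.e.\ the pointwise dominance $H_\qqq \ge H_\pp$ of the two multigraded Hilbert functions.

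The main obstacle is translating this Hilbert function inequality into the desired comparison of mixed multiplicities, which are the top-degree coefficients of the Hilbert polynomials $P_\pp$ and $P_\qqq$. Two issues arise: (i) the two polynomials can have different total degrees, reflecting a jump in $\dim \Supp_{++}(\sM(\pp))$ under specialization; and (ii) even when the degrees agree, pointwise dominance of multivariate polynomials does not automatically imply coefficient-wise dominance of their leading homogeneous parts in the standard basis. Issue (i) is absorbed by the definition of $e_\bn^\sM$ through the value $\infty$: any increase in support dimension forces $e_\bn^\sM(\qqq) = \infty$, trivially exceeding any finite value at $\pp$. For issue (ii), I would expand both $P_\pp$ and $P_\qqq$ in the binomial basis $\prod_i \binom{n_i}{m_i}$, in which the degree-$d$ coefficients are precisely the mixed multiplicities $e(\bm;\sM(\pp))$ with $|\bm|=d$, and then extract each coefficient from the pointwise inequality $H_\qqq \ge H_\pp$ via iterated finite differences evaluated at sufficiently many lattice points where the Hilbert function agrees with the Hilbert polynomial. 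A cleaner alternative is to cut $\sM$ by general linear sections in each of the $p$ groups of variables, reducing the statement to ordinary Hilbert--Samuel semicontinuity in families (Bennett, Hironaka, Lipman); arranging these genericity conditions to hold uniformly on a neighborhood of $\qqq$, so that the same sections can be specialized to every fiber of interest, is where I expect the most delicate technical work to concentrate.
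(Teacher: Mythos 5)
Your overall strategy—generic freeness for constructibility plus a specialization inequality, which together amount to the topological Nagata criterion used in the paper—matches the paper's proof, and you correctly flag the central obstacle: pointwise dominance $P_\qqq \ge P_\pp$ of the fiberwise Hilbert polynomials does not transfer to their top-degree coefficients (the paper illustrates this with $(x-y)^2$). Where your plan breaks down is alternative (a): extracting the mixed multiplicities from $H_\qqq - H_\pp$ by iterated finite differences does not work, because finite differencing destroys pointwise nonnegativity. Concretely, if $P_\qqq - P_\pp = \tfrac12\bigl((t_1-t_2)^2 + t_1 + t_2 + 2\bigr)$, which is nonnegative on $\ZZ_{\ge 0}^2$, then $\Delta_1\Delta_2(P_\qqq - P_\pp) = -1 < 0$, so the comparison of $(1,1)$-coefficients comes out with the wrong sign; the finite-difference calculus alone cannot rule this out, which is exactly why the paper needs a geometric argument.

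Your alternative (b)—cutting by general linear sections in each of the $p$ degree groups—is what the paper actually does, but your sketch leaves open precisely the step that carries the weight: one must produce a single sequence of homogeneous elements of the degrees $\ee_i$ in $\sT$ that is \emph{simultaneously} filter-regular on both $\sM\otimes_A\kappa(\pp)$ and $\sM\otimes_A\kappa(\qqq)$. The paper achieves this by first reducing to $A$ a local domain with maximal ideal $\pp$ and $\qqq = 0$, then passing to the faithfully flat extension $A \to A[y]_{\pp A[y]}$ so the residue field is not algebraic over a finite field, and then invoking \cite[Lemma 2.6]{PTUV} together with prime avoidance to get elements that are general in both fibers at once. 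After that, rather than appealing to a classical one-variable Hilbert--Samuel semicontinuity theorem, the paper cuts all the way down to $\bn=\mathbf{0}$ using \cite[Lemma 3.9]{MIXED_MULT}, at which point the desired inequality is a direct Nakayama comparison of minimal numbers of generators over $A_\pp$ versus $A_\qqq$. Without this simultaneous-genericity lemma your plan does not close, so you should supply it (or cite it) rather than defer it as ``the delicate part.''
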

	
	A direct consequence of the above theorem is the upper semicontinuity of the respective functions
	$$
	\deg_{\sX,\PP_A}^\bn : \Spec(A) \rightarrow \ZZ \cup \{\infty\}
	$$
	measuring the multidegrees of the fibers $\sX \times_{\Spec(A)} \Spec(\kappa(\pp))$ (see \autoref{def_multdeg_fib} and \autoref{cor_multdeg_fib}).
	On the other hand, under certain conditions, in \autoref{cor_specializ_mods} we show that mixed multiplicities are lower semicontinuous under the process of taking specializations.

	\smallskip
	
	We study rational maps and their specializations in \autoref{sect_rat_maps}.
	In particular, we are interested in the behavior of projective degrees with respect to specializations. 
	Since projective degrees are the mixed multiplicities of the corresponding Rees algebra (i.e., the multidegrees of the graph), this type of question can be traced back to the problem of specializing Rees algebras (see \cite{EH_SPECIALIZ, KSU}).
	More recently, specializations of rational maps were studied in \cite{SPECIALIZATION_ARON, SPECIALIZATION_MARC_ARON}.  
	Let $A$ be a Noetherian domain, $S = A[x_0,\ldots,x_r]$ be a standard graded polynomial ring and $\PP_A^r = \Proj(S)$.
	Let $\FF :  \PP_A^r \dashrightarrow \PP_A^s$ be a rational map with representative $\mathbf{f} = (f_0:\cdots:f_s)$ such that $\{f_0,\ldots,f_s\} \subset S$ are homogeneous elements of the same degree. 
	Denote by $I = (f_0,\ldots,f_s) \subset S$ the base ideal of $\FF$.
	For any $\pp \in \Spec(A)$, we get the rational map 
	$\FF(\pp) : \PP_{\kappa(\pp)}^r \dashrightarrow \PP_{\kappa(\pp)}^s$ with representative 
	$
	\pi_\pp(\mathbf{f}) = \left( \pi_\pp(f_0) : \cdots : \pi_\pp(f_s) \right)
	$
	where $\pi_\pp(f_i)$ is the image of $f_i$ under the natural map $\pi_\pp : S \rightarrow S(\pp) = S \otimes_A \kappa(\pp)$.
	Then, we introduce the functions 
	\begin{align*}
		{\rm degIm}^\FF : \Spec(A) \rightarrow \ZZ & \qquad\qquad \text{(measures the degree of the image of $\FF(\pp)$)}\\
		d_i^\FF : \Spec(A) \rightarrow \ZZ & \qquad\qquad \text{(measures the projective degrees of $\FF(\pp)$)}\\	
		j^I : \Spec(A) \rightarrow \ZZ & \qquad\qquad \text{(measures the $j$-multiplicity of $I(\pp) \subset S(\pp)$)};
	\end{align*}
	see \autoref{def_spec_rat_map}.
	Our second main result deals with the behavior of the last three functions.

	\begin{headthm}[\autoref{thm_specialization_rat_map}]
		\label{thmB}
		The following statements hold:
		\begin{enumerate}[\rm (i)]
			\item $\degIm^\FF : \Spec(A) \rightarrow \ZZ$ is a lower semicontinuous function.
			\item $d_i^\FF : \Spec(A) \rightarrow \ZZ$ is a lower semicontinuous function for all $0 \le i \le r$.
			\item $j^I : \Spec(A) \rightarrow \ZZ$ is a lower semicontinuous function.
		\end{enumerate}
	\end{headthm}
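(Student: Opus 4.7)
The plan is to reduce the problem via constructibility to the case of a discrete valuation ring, and then to exploit flatness of an integral ambient (graph, image, or Rees algebra) together with a closed-subscheme comparison.

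First I would establish constructibility. Generic flatness applied to the bigraded Rees algebra $\Rees_S(I) = \bigoplus_{n \ge 0} I^n t^n$, to the graded subalgebra $R = A[f_0, \ldots, f_s] \subseteq S$, and to the auxiliary bigraded algebra governing the $j$-multiplicity, combined with Noetherian induction, makes $\degIm^\FF$, $d_i^\FF$, and $j^I$ into constructible $\ZZ$-valued functions. For such a function, lower semicontinuity is equivalent to the inequality $f(\qqq) \le f(\pp)$ whenever $\qqq$ is a specialization of $\pp$. Base changing to a discrete valuation ring that dominates the relevant pair of local rings, and invoking compatibility of projective degrees and the $j$-multiplicity with residue-field extensions, reduces the problem to the case where $A$ is itself a DVR with generic point $\eta$ and closed point $\mm$; it then suffices to prove $f(\mm) \le f(\eta)$.

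Now assume $A$ is a DVR. For part~(ii), let $\Gamma \subseteq \PP_A^r \times_A \PP_A^s$ be the closure of the graph of $\FF$, and let $\Gamma(\pp)$ denote the closure of the graph of $\FF(\pp)$. Since $\Gamma$ is integral and dominates $\Spec(A)$, it is $A$-torsion-free, hence $A$-flat. Flatness forces the bigraded Hilbert polynomial to be constant on fibers, so
$$\deg^{i,r-i}\bigl(\Gamma \times_A \kappa(\eta)\bigr) \;=\; \deg^{i,r-i}\bigl(\Gamma \times_A \kappa(\mm)\bigr).$$
The graph $\Gamma(\mm)$ sits inside the scheme-theoretic fiber $\Gamma \times_A \kappa(\mm)$ as a closed subscheme of the same pure dimension $r$, so a fundamental-cycle comparison yields $\deg^{i,r-i}(\Gamma(\mm)) \le \deg^{i,r-i}(\Gamma \times_A \kappa(\mm))$. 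Chaining these two facts gives $d_i(\FF(\mm)) \le d_i(\FF(\eta))$. Part~(i) follows by the analogous argument with $\Gamma$ replaced by the closure of the image of $\FF$ in $\PP_A^s$: its homogeneous coordinate ring $R$ is integral and $A$-flat, and the $\mm$-fiber $R \otimes_A \kappa(\mm)$ surjects onto the coordinate ring of $\IM(\FF(\mm))$. Part~(iii) follows from writing $j^I(\pp)$ as a suitable mixed multiplicity of a bigraded algebra built from $I$ and the irrelevant ideal $(x_0, \ldots, x_r)$, then combining Theorem~A (applied in the DVR case to the integral $A$-flat ambient) with the surjection onto the specialized analogue.

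The principal obstacle is the clean execution of this DVR reduction in each of the three settings---specifically, verifying that the $A$-flat integral ambient has fibers of the correct pure dimension and surjects onto the relevant specialized object, so that the fundamental-cycle inequality is automatic. A secondary subtlety, most salient for part~(iii), is identifying the precise bigraded algebra whose mixed multiplicity realizes the $j$-multiplicity in a manner compatible with specialization.
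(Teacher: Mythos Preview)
Your approach to parts (i) and (ii) is correct and genuinely different from the paper's. The paper works directly with the topological Nagata criterion: after verifying local constancy on a dense open via generic freeness (as you also do), it compares the invariants at $\pp \supseteq \qqq$ by choosing, after a transcendental residue-field extension, a sequence $z_1,\ldots,z_i \in \mm$ that is simultaneously filter-regular for both fibers, and then applying Nakayama's lemma to the graded pieces of $S/(z_1,\ldots,z_i,I^n)$. Your route---constructibility, reduction to a DVR, then exploiting that the integral closure $\Gamma$ (resp.\ the image closure) is automatically $A$-flat over a DVR so that its fiberwise Hilbert polynomial is constant, followed by the closed-immersion inequality $\Gamma(\mm) \hookrightarrow \Gamma \times_A \kappa(\mm)$---is more geometric and avoids the filter-regular machinery entirely. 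It also makes transparent \emph{why} the inequality goes the right direction: the specialized Rees algebra is a quotient of the special fiber of the total Rees algebra. The paper's approach, in turn, avoids the DVR reduction and the field-extension compatibility check, and yields the comparison for arbitrary $\pp \supseteq \qqq$ in one step.

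For part (iii), your proposed route is unnecessarily hazardous. The bigraded algebra governing $j(I(\pp))$ (built from $\gr_I$ and $\gr_\mm$) is not integral, so there is no obvious $A$-flat ambient with the right surjection, and invoking Theorem~A would push in the wrong direction anyway. The paper instead dispatches (iii) in one line: by \cite[Theorem~5.3]{KPU_blowup_fibers} one has $j^I(\pp) = \delta \cdot d_0^\FF(\pp)$ for every $\pp$, so (iii) is an immediate consequence of (ii). You should use this identity rather than attempt a parallel flatness argument.
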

	
	In \autoref{cor_upper_bounds_proj_degs}, we use \autoref{thmB} to give sharp upper bounds for the projective degrees of several families of rational maps (the list includes perfect ideals of height two and Gorenstein ideals of height three).
	The basic idea is that for several families of rational maps under generic conditions we can compute projective degrees, and then \autoref{thmB} yields an upper bound for any specialization.
	
	\medskip
	
	\noindent
	1.2.
	\textbf{Polar multiplicities, Segre numbers and integral dependence.}
	\smallskip
	
	Our next objective is to study various aspects of the \emph{polar multiplicities} and \emph{Segre numbers} of an ideal and introduce a new invariant that plays an important role in our work.
	One technical goal of our work is to extend several of the results of Gaffney and Gassler \cite{GG} from their analytic setting to an algebraic one over a Noetherian local ring.
	Let $(R,\mm,\kappa)$ be a Noetherian local ring with maximal ideal $\mm$ and residue field $\kappa$.
	Let $d = \dim(R)$, $X = \Spec(R)$ and $I \subset R$ be a proper ideal. 
	Consider the blow-up $\pi : P = \Proj(\Rees(I)) \rightarrow X$ and the exceptional divisor $E = \Proj(\gr_I(R))$ of $X$ along $I$.
	Following the general notion of polar multiplicities due to Kleiman and Thorup \cite{KLEIMAN_THORUP_GEOM,KLEIMAN_THORUP_MIXED}, one defines 
	$$
	\text{(\emph{polar multiplicity}) \;}m_i(I, R) \;=\; m_d^{d-i}(\Rees(I))  \quad \text{ and } \qquad \text{(\emph{Segre number}) \;}c_i(I, R) \;=\; m_{d-1}^{d-i}(\gr_I(R))
	$$
	as polar multiplicities of $\Rees(I)$ and $\gr_I(R)$, respectively. 
	It should be mentioned that the polar multiplicities of a standard graded $R$-algebra can be seen as the multidegrees of a biprojective scheme over the residue field $\kappa$.
	We introduce the new invariant 
	$$
	\nu_i(I, R) \;=\; m_i(I, R) + c_i(I, R)
	$$
	that we call \emph{polar-Segre multiplicity}.
	
	\smallskip
	
	\autoref{sect_invariants_cycles} is dedicated to establish several properties of the invariants $m_i(I, R)$, $c_i(I, R)$ and $\nu_i(I,R)$.
	Let $\delta = o(I)$ be the order of $I$ (see \autoref{nota_G_param}).
	In \autoref{prop_decomp_div}, we show the inequality 
	$$
	\delta \cdot m_{i-1}(I, R) \;\le\; m_i(I, R) + c_{i}(I, R) \;=\; \nu_i(I, R),
	$$
	and that equality holds for all $1 \le i \le d$ if and only if $I$ satisfies the \emph{$\sG$-parameter condition generically} (see \autoref{nota_G_param}).
	Next, we express all these numbers as the multiplicities of the push-forward via $\pi$ of certain cycles obtained by making general cuttings; thus following general tradition when studying local invariants (see, e.g.,  \cite{GG}, \cite[\S 8]{KLEIMAN_THORUP_GEOM}). 
	Assume that $\kappa$ is an infinite field, $\underline{H} = H_1,\ldots, H_d$ is a sequence of general hyperplanes, and denote by $\underline{g}=g_1,\ldots,g_d$ the associated sequence of elements in $I$ (see \autoref{nota_gen_hyper}).
	We introduce the following objects: 
	\begin{align*}
		\text{(\emph{polar scheme})} & \qquad  P_i(I, X)  \;=\; P_i^{\underline{H}}(I, X)  \;=\; \pi\left(H_1 \cap \cdots \cap H_i\right) \\
		\text{(\emph{Segre cycle})} & \qquad  \Lambda_i(I, X)  \;=\; \Lambda_i^{\underline{H}}(I, X)  \;=\; \pi_*\left(\big[E \cap H_1 \cap \cdots \cap H_{i-1}\big]_{d-i}\right) \;\in\; Z_{d-i}(X) \\
		\text{(\emph{polar-Segre cycle})} & \qquad  V_i(I, X)  \;=\; V_i^{\underline{H}}(I, X)  \;=\; \pi_*\left(\big[H_1 \cap \cdots \cap H_{i-1} \cap \pi^*D_i\big]_{d-i}\right) \;\in\; Z_{d-i}(X); 	
	\end{align*}
	for more details, see \autoref{setup_segre_cycles}.
	We have the following unifying result.		
	
	\begin{headthm}[\autoref{thm_polar_segre_cycles}]
		{\rm(}$\kappa$ infinite{\rm)}. The following statements hold: 
		\begin{enumerate}[\rm (i)]
			\item $m_i(I, R) = e_{d-i}\left(P_i(I, X)\right)$ and 
			$
			P_i(I, X) \;=\; \Spec\left(R/(g_1,\ldots,g_i):_RI^\infty\right).
			$
			\smallskip
			\item $c_i(I, R) = e_{d-i}\left(\Lambda_ i(I, X)\right)$ and 
			$$
			\Lambda_ i(I, X) \;=\; \sum_{\substack{\pp \in V\left((g_1,\ldots,g_{i-1}):_RI^\infty\right)\\ \pp \in V(I), \; \dim(R/\pp)=d-i}} e\big(I,\, R_\pp/(g_1,\ldots,g_{i-1})R_\pp:_{R_\pp} I^\infty R_\pp\big)\cdot \left[R/\pp\right] \;\in\; Z_{d-i}(X). 
			$$
			\item $\nu_i(I, R) = e_{d-i}(V_i(I, X))$ and 
			$$
			V_i(I, X) \;=\; \left[P_i(I, X)\right]_{d-i} + \Lambda_i(I, X) \;=\; \big[\Spec\left(R/(g_1,\ldots,g_{i-1}):_RI^\infty+g_iR\right)\big]_{d-i} \;\in\, Z_{d-i}(X).
			$$		
		\end{enumerate}		
	\end{headthm}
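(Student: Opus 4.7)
I prove the three identities in parallel by the following strategy: interpret each multiplicity as the degree of a cycle on $P$ or $E$ obtained by cutting with general hyperplane sections via the Kleiman--Thorup formalism, push this cycle forward along $\pi$ to $X$, and identify the result algebraically. The infinitude of $\kappa$ provides a dense Zariski-open set of hyperplanes for which all the required generic conditions hold simultaneously.

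\emph{Parts (i) and (ii).} For part (i), $m_i(I,R) = m_d^{d-i}(\Rees(I))$ is, by the Kleiman--Thorup definition, the degree of the cycle $[H_1\cap\cdots\cap H_i]_{d-i}$ on $P$, where the $H_j$ correspond to general $g_j \in I = \Rees(I)_1$. Since $\pi$ is birational over $X \setminus V(I)$, the projection formula gives $m_i(I,R) = e_{d-i}(P_i(I,X))$; the algebraic identification $P_i(I,X) = \Spec(R/(g_1,\ldots,g_i):_R I^\infty)$ is the standard description of the scheme-theoretic image in $X$ of the complete intersection $V_+(g_1,\ldots,g_i)$ on the blow-up, with the saturation removing exactly the exceptional components. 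Part (ii) is analogous on $E = \Proj(\gr_I(R))$ with general cuts coming from the images $\bar g_j \in I/I^2 = \gr_I(R)_1$, yielding $c_i(I,R) = e_{d-i}(\Lambda_i(I,X))$. For the sum formula, set $J = (g_1,\ldots,g_{i-1}):_R I^\infty$; at a prime $\pp \in V(I)\cap V(J)$ with $\dim R/\pp = d-i$, the ring $R_\pp/JR_\pp$ is one-dimensional, $IR_\pp$ is $\pp R_\pp$-primary in it (no associated prime of $R_\pp/JR_\pp$ contains $I$), and the local contribution to the pushforward equals $e(I, R_\pp/JR_\pp)$.

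\emph{Part (iii).} For a general $g_i\in I$ whose image in $I/I^2$ is nonzero, $g_i$ has order $1$ along $E$, so as Cartier divisors on $P$ one has $\pi^* D_i = H_i + E$. Intersecting with $[H_1\cap\cdots\cap H_{i-1}]$ and applying $\pi_*$ gives
\[
V_i(I,X) \;=\; \pi_*[H_1\cap\cdots\cap H_i]_{d-i} \,+\, \pi_*[H_1\cap\cdots\cap H_{i-1}\cap E]_{d-i} \;=\; [P_i(I,X)]_{d-i} + \Lambda_i(I,X),
\]
whence $\nu_i(I,R) = e_{d-i}(V_i(I,X))$ follows from (i) and (ii). The identification $V_i(I,X) = [\Spec(R/(J + g_iR))]_{d-i}$ comes from decomposing the primary components of $J + g_iR$: those outside $V(I)$ coincide with the components of $P_i(I,X)$ via $(J + g_iR):_R I^\infty = (g_1,\ldots,g_i):_R I^\infty$; for $\pp\in V(I)$ of dimension $d-i$ the length equals $\length(R_\pp/(J+g_iR)R_\pp) = e(g_iR_\pp,\, R_\pp/JR_\pp) = e(IR_\pp,\, R_\pp/JR_\pp)$, the last equality because a general $g_i$ generates a minimal reduction of $IR_\pp$ in the one-dimensional quotient.

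\textbf{Main obstacle.} The main technical difficulty is to verify all the generic conditions on $g_1,\ldots,g_d$ in concert: purity of dimension of the successive intersections on both $P$ and $E$ (Bertini-type arguments); the condition $o_E(g_i)=1$ so that the divisor decomposition $\pi^*D_i = H_i + E$ holds; and the condition that $\bar g_i$ generates a minimal reduction of $IR_\pp$ modulo $JR_\pp$ in every relevant one-dimensional localization. Guaranteeing all of these on a single dense open set of choices is where the hypothesis that $\kappa$ is infinite plays its essential role, and where the translation between the geometric pushforward and the algebraic saturation formulas becomes most delicate.
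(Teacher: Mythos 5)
Your overall strategy matches the paper's: push forward cycles on $P$ and $E$ along $\pi$ and identify them algebraically after cutting by general hyperplanes. However, there are genuine gaps at the technical heart of part (i), and part (iii) is argued by a somewhat different (but fixable) route.

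The central issue is your claim that ``by the Kleiman--Thorup definition'' $m_i(I,R)$ is the degree of the cycle $[H_1\cap\cdots\cap H_i]_{d-i}$, followed by an appeal to ``the projection formula.'' That is not how the polar multiplicity is defined: $m_d^{d-i}(B)$ is a leading coefficient of the bivariate Hilbert polynomial $(v,n)\mapsto\length_R(B_v/\mm^{n+1}B_v)$, a statement about graded modules, not a priori about cycle classes. The paper's actual argument is to first invoke \cite[Proposition 2.10]{cidruiz2024polar} to rewrite $m_i(I,R)=e_{d-i}\bigl(\HH^0(P,\OO_{H_1\cap\cdots\cap H_i})\bigr)$, and then confront the genuine subtlety that the $R$-module $\HH^0(P,\OO_P)$ is \emph{not} the same as $R/(0:_R I^\infty)$: the two differ by $[\HH^1_{B_+}(B)]_0$, and one must show $\dim\bigl([\HH^1_{B_+}(B)]_0\bigr)<d$ so that this discrepancy does not contribute to the leading term. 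Your proposal never engages with this; calling the identification $P_i(I,X)=\Spec\bigl(R/(g_1,\ldots,g_i):_RI^\infty\bigr)$ ``the standard description'' elides exactly the four-term local-cohomology sequence that makes it true. Likewise you omit the reduction that makes the $i\geq 1$ cases follow from $i=0$: the equality $H_1\cap\cdots\cap H_i = {\rm Bl}_I(\Spec(R/(g_1,\ldots,g_i)))$ holds because a sequence of general elements is superficial for $I$, giving $\aaa_iI^{v-1}=I^v\cap\aaa_i$ for $v\gg 0$, and this reduction is what lets the $i=0$ computation propagate.

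Your part (iii) argument is a legitimate alternative to the paper's: instead of reducing to $i=1$ on $\overline{X}_{i-1}$ and computing $\length_{R_\pp}(I^vR_\pp/gI^vR_\pp)$ directly via twists $\OO_{P_\pp}(v)$, you decompose the cycle of $J+g_iR$ prime by prime. The key facts you need are correct but should be made explicit: (a) the saturation $(g_1,\ldots,g_i):_RI^\infty$ has no associated prime containing $I$ (since $R/(\aaa:_RI^\infty)$ is $I$-torsion-free), so the $[P_i]$-contribution at $\pp\in V(I)$ vanishes; (b) on a one-dimensional local ring $R_\pp/JR_\pp$ with a nonzerodivisor $g_i$, $\length=e$ holds for the parameter ideal $(g_i)$; (c) since $g_i$ is general in $I$, one has $e(g_iR_\pp,\cdot)=e(IR_\pp,\cdot)$ by minimal reductions. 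Your ``Main obstacle'' paragraph correctly flags the places where care is needed, but acknowledging an obstacle is not overcoming it; as written, the proof of part (i) does not go through.
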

	
	A particular consequence of the above theorem is that it recovers known formulas for the polar multiplicities and Segre numbers of an ideal (see \autoref{rem_formulas}).
	
	\smallskip
	
	In \autoref{sect_integral_dep}, we introduce new numerical criteria for integral dependence.
	From the main result of \cite{PTUV}, we know that Segre numbers detect integral dependence (also, in an analytic setting, see \cite{GG}).
	In \autoref{thm_psid}, we prove a PSID that is similar to the ones in \cite[Theorem 4.7]{GG} and \cite[Theorem 4.4]{PTUV}.
	Then a driving question for our work is: \emph{can we detect  integral dependence with the invariants $m_i(I, R)$ and $\nu_i(I, R)$?} 
	In the particular case of the polar multiplicities $m_i(I, R)$ this has been a folklore question for many years.
	Here we give a definitive and perhaps unfortunate answer: 
	\begin{itemize}
		\item \emph{``only Segre numbers detect integral dependence''}.
	\end{itemize}
	Indeed, in \autoref{counter_polar} and \autoref{counter_nu}, we provide examples where the invariants $m_i(I, R)$ and $\nu_i(I, R)$ do not detect integral dependence. 
	On the other hand, we have the following criterion where these two invariants can be used.

	\begin{headthm}[\autoref{thm_new_crit_int_dep}]
		Assume that $R$ is equidimensional and universally catenary, and let $I \subset J$ be two $R$-ideals.
		Suppose the following two conditions: 
		\begin{enumerate}[\rm (a)]
			\item $o(I) = o(J)$.
			\item $I$ satisfies the $\sG$-parameter condition generically {\rm(}see \autoref{nota_G_param}{\rm)}.
		\end{enumerate}
		Then the following are equivalent: 
		\begin{enumerate}[\rm (i)]
			\item $J$ is integral over $I$.
			\item $m_i(I, R) = m_i(J,R)$ for all $0 \le i \le d-1$.
			\item $\nu_i(I, R) = \nu_i(J, R)$ for all $0 \le i \le d$.
		\end{enumerate}
	\end{headthm}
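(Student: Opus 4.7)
The strategy is to prove the cycle of implications (i)$\Rightarrow$(ii), (i)$\Rightarrow$(iii), (ii)$\Rightarrow$(i), and (iii)$\Rightarrow$(i), with the PSID of \autoref{thm_psid} bridging the two converse directions. A recurring ingredient is the pair of monotonicity inequalities
\begin{equation*}
m_i(I, R) \;\ge\; m_i(J, R) \qquad \text{and} \qquad c_i(I, R) \;\ge\; c_i(J, R)
\end{equation*}
induced by the containment $I \subseteq J$, which generalize the classical fact that $e(I) \ge e(J)$ and are obtainable from the cycle descriptions of \autoref{thm_polar_segre_cycles} by choosing generic elements in $I$ and noting that they remain valid (though not necessarily generic) elements of $J$, together with the reverse colon containment $(g_1,\ldots,g_i):_R I^\infty \supseteq (g_1,\ldots,g_i):_R J^\infty$.

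The implications (i)$\Rightarrow$(ii) and (i)$\Rightarrow$(iii) do not require either (a) or (b). When $J$ is integral over $I$, the inclusion $\Rees(I) \subseteq \Rees(J)$ is an integral extension of standard graded $R$-algebras, so the two Rees algebras share the same integral closure in $R[t]$; an analogous relation holds for the associated graded rings through the normalized blow-up of $I$. Since mixed multiplicities are invariants of the integral closure in the multigraded setting, one concludes $m_i(I, R) = m_i(J, R)$ and $c_i(I, R) = c_i(J, R)$ for all relevant $i$, and summing these yields $\nu_i(I, R) = \nu_i(J, R)$.

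For (ii)$\Rightarrow$(i), we combine condition (b), which by \autoref{prop_decomp_div} gives the equality $\delta \cdot m_{i-1}(I, R) = m_i(I, R) + c_i(I, R)$ for all $1 \le i \le d$, with the general inequality $o(J) \cdot m_{i-1}(J, R) \le m_i(J, R) + c_i(J, R)$ applied to $J$, where (a) ensures $o(J) = \delta$. Substituting $m_i(I, R) = m_i(J, R)$ and $m_{i-1}(I, R) = m_{i-1}(J, R)$ from (ii) produces
\begin{equation*}
m_i(I, R) + c_i(I, R) \;=\; \delta \cdot m_{i-1}(J, R) \;\le\; m_i(J, R) + c_i(J, R) \;=\; m_i(I, R) + c_i(J, R),
\end{equation*}
hence $c_i(I, R) \le c_i(J, R)$; the reverse monotonicity above then forces $c_i(I, R) = c_i(J, R)$ for all $1 \le i \le d$ (the extremal index $i = d$ requires a brief parallel step using the monotonicity of $m_d$ since (ii) does not directly control it), and \autoref{thm_psid} delivers the integral dependence of $J$ over $I$. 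The implication (iii)$\Rightarrow$(i) is even more transparent: rewrite $\nu_i(I, R) = \nu_i(J, R)$ as $m_i(I, R) - m_i(J, R) = c_i(J, R) - c_i(I, R)$, whereupon the two monotonicity inequalities force both sides to vanish, and \autoref{thm_psid} again applies.

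The main obstacle is establishing the monotonicity inequalities $m_i(I, R) \ge m_i(J, R)$ and $c_i(I, R) \ge c_i(J, R)$ under $I \subseteq J$ within the mixed multiplicity framework: although they generalize the classical Rees-type inequality for $\mm$-primary ideals, they go beyond it and demand a proper justification, most transparently via the polar-cycle and Segre-cycle formulas of \autoref{thm_polar_segre_cycles} applied to generic cuttings that remain valid for $J$. A secondary but routine concern is aligning the range of Segre-number equalities produced above with the exact range required by the PSID, a matter of dimensional bookkeeping once the monotonicity is in hand.
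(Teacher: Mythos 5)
Your proposal has a fatal flaw: the claimed general monotonicity inequality $m_i(I, R) \ge m_i(J, R)$ for $I \subseteq J$ is simply \emph{false}. The paper itself furnishes a counterexample in Example \ref{counter_nu}: there $I \subsetneq J$ in the coordinate ring of a rational quartic, yet $m_1(I, R) = 3 < 5 = m_1(J, R)$. Your (iii)$\Rightarrow$(i) argument, which rewrites $\nu_i(I,R) = \nu_i(J,R)$ as $m_i(I,R) - m_i(J,R) = c_i(J,R) - c_i(I,R)$ and then declares both sides to vanish by monotonicity, collapses at this step. The paper does not assert any general pointwise monotonicity for the polar multiplicities; rather, in (iii)$\Rightarrow$(i) it \emph{derives} $m_{i-1}(I, R) \ge m_{i-1}(J, R)$ from the hypotheses: condition (b) gives $\delta \cdot m_{i-1}(I,R) = \nu_i(I,R)$ exactly (Proposition \ref{prop_decomp_div}), condition (a) gives the one-sided inequality $\delta \cdot m_{i-1}(J,R) \le \nu_i(J,R)$ for $J$, and (iii) makes the two $\nu$-values equal. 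That chain, not a general comparison principle, is what produces the ordering of polar multiplicities, after which $c_i(I,R) \le c_i(J,R)$ follows and the PTUV criterion concludes.

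Two further problems. First, your claimed pointwise $c_i(I,R) \ge c_i(J,R)$ is also not established; the paper only proves the \emph{lexicographic} comparison in Corollary \ref{cor_lex_upper_two_ideals}, and your sketch argument ("choosing generic elements in $I$ and noting they remain valid though not generic elements of $J$") does not work, since the multiplicity formulas of Theorem \ref{thm_polar_segre_cycles} are only valid for generic cuttings and can change value (in either direction) for non-generic ones. Second, you repeatedly invoke Theorem \ref{thm_psid} (the PSID) as the closing tool, but that theorem is a specialization principle for a family over a parameter $t \in \mm$ with $\dim(R/tR)=d-1$; it is not the relevant instrument here. The paper instead closes both converse directions with the Rees-type criterion of [Theorem 4.2]{PTUV}, which says that Segre numbers detect integral dependence for $I \subseteq J$ in the equidimensional, universally catenary setting; combined with the automatic lexicographic inequality from Corollary \ref{cor_lex_upper_two_ideals}, the derived pointwise bounds $c_i(I,R) \le c_i(J,R)$ suffice. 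Your (i)$\Rightarrow$(ii)\&(iii) direction is fine in spirit (the paper cites [Corollary 3.8]{Trung2001} and [Theorem 4.2]{PTUV} for the invariance of $m_i$ and $c_i$ under integral closure), and your (ii)$\Rightarrow$(i) chain up to $c_i(I,R) \le c_i(J,R)$ essentially matches the paper, but the finish is wrong on both counts above. Also note $m_d(I,R) = 0$ always, so the "extremal index" you flag requires no extra work.
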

	
	An interesting family of ideals where the above theorem applies is that of equigenerated ideals (see \autoref{cor_equigen}).
	
	\smallskip
	
	Finally, we study the behavior of Segre numbers in families.
	More precisely, we generalize the result of Gaffney and Gassler \cite{GG} regarding the lexicographic upper semicontinuity of Segre numbers. 
	We now introduce a suitable algebraic notation to extend their original result expressed in an analytic setting. 
	Let $\iota : A \hookrightarrow R$ be a flat injective homomorphism of finite type of Noetherian rings and assume that $\pi : R \surjects A$ is a section of $\iota$.
	Let $Q = \Ker(\pi)$.
	For each $\pp \in \Spec(A)$, consider the Noetherian local ring $S(\pp) = R(\pp)_{QR(\pp)}$ that we call the \emph{distinguished fiber} of $\pp$ (see \autoref{subsect_lex}).
	Our last main result is the following.

	\begin{headthm}[\autoref{thm_Segre_lex_upper}]
		Assume that for all $\pp \in \Spec(A)$, the fibers $R(\pp)$ are equidimensional of the same dimension $d$ and
		$\HT(I(\pp)) > 0$.
		Then the function 
		$$
		\pp \in \Spec(A) \;\mapsto\; \big(c_1\left(I, S(\pp)\right), c_2\left(I, S(\pp)\right), \ldots, c_d\left(I, S(\pp)\right)\big) \in \ZZ_{\ge 0}^d 
		$$
		is upper semicontinuous with respect to the lexicographic order.
	\end{headthm}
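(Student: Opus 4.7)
The plan is to establish lexicographic upper semicontinuity by an induction on the coordinate index. Concretely, I would show that $c_1(I, S(\pp))$ is upper semicontinuous on $\Spec(A)$, and then, for each $i\geq 2$, that $c_i(I, S(\pp))$ is upper semicontinuous \emph{conditionally}, i.e.\ on the closed subset where $c_1, \ldots, c_{i-1}$ attain their values at a fixed point $\pp_0$. These conditional upper semicontinuities at every $\pp_0$ are known to be equivalent to lex upper semicontinuity of the tuple $(c_1, \ldots, c_d)$, so this reduction is the combinatorial backbone.

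The primary analytic tool is \autoref{thm_mixed_mult_fib} (upper semicontinuity of mixed multiplicities of fibers), combined with the cycle-theoretic formula of \autoref{thm_polar_segre_cycles}(ii) expressing $c_i$ via general cuttings and colon ideals. After replacing $\Spec(A)$ by an open subset and possibly enlarging $A$ by indeterminates so that every residue field is infinite, I would fix a finite generating set of $I$ and choose generic $A$-linear combinations $g_1, \ldots, g_d \in I$ of these generators. The point of this construction is that for every $\pp \in \Spec(A)$, the images $g_1(\pp), \ldots, g_d(\pp)$ in $I \cdot S(\pp)$ form a general sequence in the sense of \autoref{setup_segre_cycles}. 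The base case $i=1$ is then immediate: $c_1(I, S(\pp))$ is the mixed multiplicity of $\gr_I(R) \otimes_A \kappa(\pp)$, localized at the distinguished maximal ideal coming from $Q$, and so \autoref{thm_mixed_mult_fib} applied to $\gr_I(R)$ as a graded $A$-algebra delivers the statement.

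For the inductive step, let $Z \subset \Spec(A)$ be the closed subset on which $c_j(I, S(\pp)) = c_j(I, S(\pp_0))$ for all $j < i$. Along $Z$, I would invoke the PSID (\autoref{thm_psid}) to convert the numerical constancy of $c_1, \ldots, c_{i-1}$ into an ideal-theoretic compatibility: the colon module $(g_1, \ldots, g_{i-1}) :_R I^\infty$ commutes with base change along $Z \hookrightarrow \Spec(A)$, meaning its image in each $S(\pp)$ with $\pp \in Z$ coincides with the fiberwise colon $(g_1(\pp), \ldots, g_{i-1}(\pp)) :_{S(\pp)} I(\pp)^\infty$. Once this is in hand, \autoref{thm_polar_segre_cycles}(ii) rewrites $c_i(I, S(\pp))$ for $\pp \in Z$ as a mixed multiplicity of the fiber over $\pp$ of the single $A$-module $R / \bigl[(g_1, \ldots, g_{i-1}) :_R I^\infty\bigr]$, and one further application of \autoref{thm_mixed_mult_fib} finishes the induction.

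The main obstacle, and the step that requires real work, is justifying this fiberwise compatibility of the saturation $(g_1, \ldots, g_{i-1}) :_R I^\infty$ along $Z$. Saturation generically does not commute with base change, so without an extra input one cannot expect the colon ideal in $R$ to specialize to the colon in $S(\pp)$. The rescue is the constancy of the lower Segre numbers: following the strategy of Gaffney and Gassler in \cite{GG}, I would argue that this numerical constancy, fed into \autoref{thm_psid}, forces generic reductions of $I$ by the $g_j$'s to remain reductions on every fiber over $Z$, thereby excluding the embedded primes of $(g_1, \ldots, g_{i-1})$ that would otherwise obstruct the specialization of the colon. This is exactly the bridge from the combinatorial induction to the analytic upper semicontinuity provided by \autoref{thm_mixed_mult_fib}, and once it is carefully executed, the remaining assembly into a lex-USC statement is routine.
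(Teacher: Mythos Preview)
Your approach differs substantially from the paper's, and it has a genuine gap already at the base of the induction. The issue is that \autoref{thm_mixed_mult_fib} controls the mixed multiplicities of the \emph{fibers} $\sM\otimes_A\kappa(\pp)$ of a fixed module $\sM$, whereas $c_i(I,S(\pp))$ is a mixed multiplicity of the bigraded algebra $\gr_{QS(\pp)}\bigl(\gr_{IS(\pp)}(S(\pp))\bigr)$. There is a natural surjection $\gr_Q(\gr_I(R))\otimes_A\kappa(\pp)\twoheadrightarrow\gr_{QS(\pp)}\bigl(\gr_{IS(\pp)}(S(\pp))\bigr)$, but forming associated gradeds does not commute with base change, so this is rarely an isomorphism; and the inequality it gives points the wrong way: you get $c_1(I,S(\pp))\le e_\bn\bigl(\gr_Q(\gr_I(R))\otimes_A\kappa(\pp)\bigr)$, which cannot be chained with upper semicontinuity of the right-hand side to conclude $c_1(I,S(\pp))\ge c_1(I,S(\qqq))$. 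The same obstruction reappears in the inductive step, quite apart from whether \autoref{thm_psid} (which, as stated, detects integral dependence of an element and says nothing about specialization of saturations $(g_1,\ldots,g_{i-1}):_RI^\infty$) can be bent into the colon-compatibility statement you need. The claim that a single choice of $g_1,\ldots,g_d\in I$ can be made simultaneously general for \emph{every} fiber $S(\pp)$ is also unsupported; adjoining indeterminates to $A$ changes the base and does not resolve this.

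The paper sidesteps all of this. It too uses the topological Nagata criterion, and for generic constancy it does use generic freeness of $\gr_Q(\gr_I(R))$, because on the resulting open set the base-change surjection \emph{is} an isomorphism. But for the comparison $\pp\supset\qqq$ the paper takes a completely different route: reduce to $A$ a local domain, dominate it by a discrete valuation ring $(V,(t))$, pass to $T=(R\otimes_AV)_{(t,Q)}$, and then the desired lex inequality becomes the one-parameter statement $(c_i(I,T/tT))\ge_{\rm lex}(c_i(I,T))$, which is exactly \autoref{cor_lex_order} (a corollary of \autoref{thm_specialization_polar}), followed by \cite[Proposition~2.7]{PTUV} to localize from $T$ to $T_{QT}=S(\qqq)$. \autoref{thm_mixed_mult_fib} is not used at all.
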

	
	We give related lexicographic upper semicontinuity results for Segre numbers in \autoref{cor_lex_order} and \autoref{cor_lex_upper_two_ideals}.

	\section{Preliminaries and notation}
	\label{sect_prelim}
	
	Here we recall the concepts of mixed multiplicities, multidegrees and projective degrees.
	We also set up the notation that is used throughout the paper.
	Let $p \ge 1$ be a positive integer and, for each $1 \le i \le p$, let $\ee_i \in \ZZ_{\ge 0}^p$ be the $i$-th elementary vector $\ee_i=\left(0,\ldots,1,\ldots,0\right)$.
	If $\bn = (n_1,\ldots,n_p),\, \bm = (m_1,\ldots,m_p) \in \ZZ^p$ are two vectors, we write $\bn \ge \bm$ whenever $n_i \ge m_i$ for all $1 \le i \le p$, and $\bn > \bm$ whenever $n_j > m_j$ for all $1 \le j \le p$.
	We write $\mathbf{0} \in \ZZ_{\ge 0}^p$ for the zero vector $\mathbf{0}=(0,\ldots,0)$.

	Let $\kk$ be a field and $T$ be a finitely generated standard $\ZZ_{\ge 0}^p$-graded algebra over $\kk$, that is,  $\left[T\right]_{\mathbf{0}}=\kk$ and $T$ is finitely generated over $\kk$ by elements of degree $\ee_i$ with $1 \le i \le p$.
	The multiprojective scheme  $X = \multProj(T)$ corresponding to $T$ is given by 
	the set of all multihomogeneous prime ideals in $T$ not containing the irrelevant ideal $\nn := \left([T]_{\ee_1}\right) \cap \cdots \cap \left([T]_{\ee_p}\right)$, that is,
	$$
	X = \multProj(T) := \big\{ \mathfrak{P} \in \Spec(T) \mid \mathfrak{P} \text{ is multihomogeneous and } \mathfrak{P} \not\supseteq \nn \big\},
	$$
	and its scheme structure is obtained by using multihomogeneous localizations (see, e.g., \cite[\S 1]{HYRY_MULTIGRAD}). 
	We embed $X$ as a closed subscheme of a multiprojective space $\PP:=\PP_\kk^{m_1} \times_\kk \cdots \times_\kk \PP_\kk^{m_p}$.
	
	Let $M$ be a finitely generated $\ZZ^p$-graded $T$-module.
	A homogeneous element is said to be \textit{filter-regular on $M$} (with respect to $\nn$; see \cite[Appendix]{STUCKRAD_VOGEL_BUCHSBAUM_RINGS}) if $z \not\in \fP$ for all associated primes $\fP \in \Ass_T(M)$ of $M$ such that $\fP \not\supseteq \nn$.
	In terms of the multiprojective scheme $X$, this means that $zT_\fP$ is a nonzerodivisor on $M_\fP$ for all $\fP \in X$.
	A sequence of homogeneous elements $z_1,\ldots,z_m$ in $T$ is said to be \textit{filter-regular on $M$} (with respect to $\nn$) if $z_j$ is a filter-regular element on $M/\left(z_1,\ldots,z_{j-1}\right)M$ for all $1 \le j \le m$.
	The relevant support of $M$ is given by $\Supp_{++}(M) := \Supp(M) \cap X$.
	There is a polynomial $P_M(\ttt)=P_M(t_1,\ldots,t_p) \in \QQ[\ttt]=\QQ[t_1,\ldots,t_p]$, called the \emph{Hilbert polynomial} of $M$ (see, e.g., \cite[Theorem 4.1]{HERMANN_MULTIGRAD}, \cite[\S 4]{KLEIMAN_THORUP_GEOM}), such that the degree of $P_M$ is equal to $r = \dim\left(\Supp_{++}(M)\right)$ and 
	$$
	P_M(\nu) = \dim_\kk\left([M]_\nu\right) 
	$$
	for all $\nu \in \ZZ^p$ such that $\nu \gg \mathbf{0}$.
	Furthermore, if we write 
	\begin{equation*}
		P_{M}(\ttt) = \sum_{n_1,\ldots,n_p \ge 0} e(n_1,\ldots,n_p)\binom{t_1+n_1}{n_1}\cdots \binom{t_p+n_p}{n_p},
	\end{equation*}
	then $e(n_1,\ldots,n_p) \in \ZZ_{\ge 0}$ for all $n_1+\cdots+n_p = r$. 
	From this, we obtain the following invariants:

	\begin{definition} \label{def_multdeg}
		\begin{enumerate}[(i)]
			\item For $\bn = (n_1,\ldots,n_p) \in \ZZ_{\ge 0}^p$ with $\lvert\bn\rvert=\dim\left(\Supp_{++}(M)\right)$, $e(\bn;M) := e(n_1,\ldots,n_p)$ is the \textit{mixed multiplicity of $M$ of type $\mathbf{n}$}.
			
			\item For $\bn  \in \ZZ_{\ge 0}^p$ with $\lvert\bn\rvert=\dim(X)$, $\deg_\PP^\bn(X):=e(\bn; T)$ is the \textit{multidegree of $X=\multProj(T) \subset \PP$ of type $\bn$ with respect to $\PP$}. 
		\end{enumerate} 
	\end{definition}

	We recall the following basic concepts related to  rational maps. 
	
	\begin{definition}
		Let $\Psi : \PP_\kk^r \dashrightarrow \PP_\kk^s$ be a rational map, $Y \subset \PP_\kk^s$ be the closure of the image of $\Psi$, and $\Gamma \subset \PP_\kk^r \times_\kk \PP_\kk^s$ be the closure of the graph of $\Psi$.
		The rational map  $\Psi$ is \textit{generically finite} if one of the following equivalent conditions is satisfied:
		\begin{enumerate}[(i)]
			\item The field extension $K(Y) \hookrightarrow K(\PP_{\kk}^r)$ is finite, where $K(\PP_{\kk}^r)$ and $K(Y)$ denote the fields of rational functions of $\PP_{\kk}^r$ and $Y$, respectively.
			\item $\dim(Y) = \dim(\PP_{\kk}^r)=r$.
		\end{enumerate} 
		The \textit{degree} of $\Psi$ is defined as
		$
		\deg(\Psi):=\left[K(\PP_{\kk}^r):K(Y)\right]
		$ when $\Psi$ is generically finite.
		Otherwise, by convention, we set  $\deg(\Psi):=0$.
		For each $0 \le i \le r$, the $i$-th \textit{projective degree} of $\Psi:\PP_\kk^r \dashrightarrow \PP_\kk^s$ is given by 
		$$
		d_i(\Psi) \;:=\; \deg_{\PP_\kk^r \times_\kk \PP_\kk^s}^{i,r-i}\left(\Gamma\right).
		$$
	\end{definition}
	
	For more details on projective degrees, the reader is referred to \cite[Example 19.4]{HARRIS} and \cite[\S 7.1.3]{DOLGACHEV}.
	Of particular interest is the $0$-th projective degree $d_0(\Psi)$ as it is equal to the product of the degree of the map times the degree of the image
	\begin{equation}
		\label{eq_deg_formula}
		d_0(\Psi)\;=\;\deg(\Psi) \cdot \deg_{\PP_\kk^s}(Y)
	\end{equation}
	(e.g., this follows from \cite[Theorem 5.4]{MIXED_MULT} and \cite[Theorem 2.4]{MULTPROJ}).

	When we work with families of ideals, we shall use the following notation. 
	
	\begin{notation}
		Let $A$ be a ring and $R$ be an $A$-algebra.
		For any prime $\pp \in \Spec(A)$, let $\kappa(\pp) := A_\pp/\pp A_\pp$ be the residue field of $\pp$, and set $R_\pp := R \otimes_A A_\pp$, $R(\pp) := R \otimes_A \kappa(\pp)$ and $I(\pp) := I R(\pp) \subset R(\pp)$ for any ideal $I \subset R$.
	\end{notation}

	The notion general element will be quite useful in our treatment, thus we recall the following definition. 
	
	\begin{definition}
		Let $R$ be a Noetherian local ring infinite residue field $\kappa$.
		Let $I \subset R$ be a proper ideal generated by elements $f_1,\ldots,f_m \in R$.
		\begin{itemize}[--]
			\item We say that a property $\mathscr{P}$ holds for a \emph{general element} $g \in I$ if there exists a dense Zariski-open subset $U \subset \kappa^e$ such that whenever $g = a_1f_1+\cdots+a_mf_m$ and the image of $(a_1,...,a_m)$ belongs to $U$, then the property $\mathscr{P}$ holds for $g$.
			\item We say that $\underline{g} = g_1, \ldots, g_k$ is a \emph{sequence of general elements} in $I$ if the image of $g_i$ in the ideal $I /(g_1,\ldots,g_{i-1}) \subset R/(g_1,\ldots,g_{i-1})$ is a general element for all $1 \le i \le k$.
			We also say that $\underline{g} = g_1,\ldots,g_k$ are \emph{sequentially general elements} in $I$.
		\end{itemize}
		Given a multihomogeneous ideal $\mathcal{J} \subset T$, we say that an element $z \in T$ is general in $\mathcal{J}$ if its image is general in the localization $\mathcal{J} T_\mathfrak{M}$ where $\mathfrak{M} := \left([T]_{\ee_1}\right) + \cdots + \left([T]_{\ee_p}\right)$. 
	\end{definition}

	\section{The behavior of mixed multiplicities}
	\label{sect_mixed_mult}
	
	In this section, we study the behavior of mixed multiplicities under the processes of taking fibers and performing  specializations, both with respect to a base ring.
	This section revisits and generalizes some results from \cite[Chapter 5]{COX_KPU} and \cite{SPECIALIZATION_MARC_ARON, SPECIALIZATION_ARON}. 
	We fix the following setup throughout this section. 
	
	\begin{setup}
		\label{setup_fibers_specialization}
		Let $A$ be a Noetherian ring and $\sT$ be a finitely generated standard $\ZZ_{\ge 0}^p$-graded algebra over $A$.
		Let $\sX = \multProj(\sT)$ be the corresponding multiprojective scheme.		
	\end{setup}

	Given a topological space $Z$ and a totally ordered set $(\mathfrak{S}, <)$, 
	we say that a function $f : Z \rightarrow \mathfrak{S}$ is \emph{upper semicontinuous} if $\{ z \in Z \mid f(z) \ge s \}$ is a closed subset of $Z$ for each $s \in \mathfrak{S}$;
	on the other hand, a function $f : Z \rightarrow \mathfrak{S}$ is said to be \emph{lower semicontinuous} if $\{ z \in Z \mid f(z) \le s \}$ is a closed subset of $Z$ for each $s \in \mathfrak{S}$.

	An important basic tool in this paper is the topological Nagata criterion (see, e.g., \cite[Theorem 24.2]{MATSUMURA}). 
	
	\begin{remark}[topological Nagata criterion for openness]
		\label{rem_top_Nagata}
		A subset $U \subset \Spec(A)$ is open if and only if the following two conditions are satisfied: 
		\begin{enumerate}[\rm (i)]
			\item If $\qqq \in U$, then $U$ contains a nonempty open subset of $V(\qqq) \subset \Spec(A)$.
			\item If $\pp, \qqq \in \Spec(A)$, $\pp \in U$ and $\pp \supseteq \qqq$, then $\qqq \in U$.
		\end{enumerate}
	\end{remark}
	
	Given a finitely generated $\ZZ^p$-graded $\sT$-module $\sM$, we seek to study the behavior of mixed multiplicities $e(\bn; \bullet)$ when considering the family of modules $\sM \otimes_A \kappa(\pp)$ with $\pp\in\Spec(A)$.
	Notice that $\sM \otimes_A \kappa(\pp)$ is a finitely generated $\ZZ^p$-graded $\sT(\pp)$-module and that $\sT(\pp)$ is a finitely generated standard $\ZZ_{\ge0}^p$-graded algebra over the field $\kappa(\pp)$.
	
	\begin{definition}
		\label{def_mixed_mult_fib}
		We consider the functions 
		$$
		d_{++}^\sM : \Spec(A) \rightarrow \ZZ, \quad \pp \mapsto \dim\left(\Supp_{++}\left(\sM \otimes_A \kappa(\pp)\right)\right)
		$$
		and 
		$$
		e_\bn^\sM : \Spec(A) \rightarrow \ZZ \cup \{\infty\},  \qquad \pp \mapsto \begin{cases}
			e\left(\bn; \sM \otimes_A \kappa(\pp)\right) & \text{ if } |\bn| = d_{++}^\sM(\pp) \\
			0 & \text{ if } |\bn| > d_{++}^\sM(\pp) \\
			\infty & \text{ if } |\bn| < d_{++}^\sM(\pp) 
		\end{cases}
		$$
		for every $\bn \in \ZZ_{\ge 0}^p$.
		We use the natural ordering on the set $\ZZ \cup \{\infty\}$.
	\end{definition}

	The following result extends \cite[Theorem 5.13]{COX_KPU} to a multigraded setting.
	
	\begin{theorem}
		\label{thm_mixed_mult_fib}
		Assume \autoref{setup_fibers_specialization}.
		Let $\sM$ be a finitely generated $\ZZ^p$-graded $\sT$-module.
		Then the following statements hold: 
		\begin{enumerate}[\rm (i)]
			\item $d_{++}^\sM : \Spec(A) \rightarrow \ZZ$ is an upper semicontinuous function.
			\item $e_\bn^\sM : \Spec(A) \rightarrow \ZZ \cup \{\infty\}$ is an upper semicontinuous function for every $\bn \in \ZZ_{\ge0}^p$. 
		\end{enumerate}
	\end{theorem}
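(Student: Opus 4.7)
The strategy is to verify upper semicontinuity by applying the topological Nagata criterion (\autoref{rem_top_Nagata}) to each sublevel set $\{\pp \in \Spec(A) : f(\pp) \le s\}$, where $f$ denotes either $d_{++}^\sM$ or $e_\bn^\sM$. This reduces both (i) and (ii) to showing, for every pair $\qqq \subseteq \pp$ in $\Spec(A)$: (a) $f(\qqq) \le f(\pp)$ (closure under generalization), and (b) $f = f(\qqq)$ on a dense open subset of $V(\qqq)$ (generic constancy). After replacing $A$ by $A/\qqq$, one may assume throughout that $A$ is a Noetherian domain and $\qqq = (0)$; set $K = \text{Frac}(A) = \kappa(0)$ and let $P_K$ denote the Hilbert polynomial of $\sM_K := \sM \otimes_A K$.

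The central tool is a multigraded version of Grothendieck's generic flatness: applied to the finitely generated $\sT$-module $\sM$ over the finitely generated $A$-algebra $\sT$, it produces a nonzero $a \in A$ such that $\sM \otimes_A A_a$ is a flat $A_a$-module. Since each graded component $[\sM]_\nu$ is a finitely generated $A$-module arising as a direct summand of $\sM$, each $[\sM]_\nu \otimes_A A_a$ is finitely generated and flat over $A_a$, hence locally free of constant rank on the connected set $D(a)$. Consequently, for every $\pp \in D(a)$, $\dim_{\kappa(\pp)}[\sM(\pp)]_\nu = \dim_K [\sM_K]_\nu$ for all $\nu$, whence $P_{\sM(\pp)} = P_K$ as polynomials, and both $d_{++}^\sM$ and $e_\bn^\sM$ take their generic values on $D(a)$. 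This establishes condition (b). Combined with the standard upper semicontinuity of fiber rank applied to each $A$-module $[\sM]_\nu$, one also obtains $P_{\sM(\pp)}(\nu) \ge P_K(\nu)$ for all $\pp$ and $\nu \gg \mathbf{0}$, forcing $\deg P_{\sM(\pp)} \ge \deg P_K$ and hence $d_{++}^\sM(\pp) \ge d_{++}^\sM((0))$, which proves (i).

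For (ii), every case except $d_{++}^\sM(\pp) = |\bn| = d_{++}^\sM((0))$ follows directly from (i): one side of the inequality is $\infty$ or the other is $0$. The remaining case is the hard one, and I expect it to be the main obstacle: the pointwise inequality $P_{\sM(\pp)} \ge P_K$ on $\nu \gg \mathbf{0}$ does \emph{not} in general imply a term-by-term comparison of the top multidegree coefficients (even the homogeneous top part $(P_{\sM(\pp)} - P_K)_{\text{top}}$, while nonnegative on $\mathbb{R}_{\ge 0}^p$, can have individual coefficients of either sign in the binomial basis). To handle it, I would localize at $\pp$ to reduce to $(A,\pp)$ a Noetherian local domain and argue by Noetherian induction on $\dim A$: the case $\dim A = 0$ is trivial, and in general applying generic flatness to $A_\pp$ gives a dense open of $\Spec(A_\pp)$ on which Hilbert polynomials are constant. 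To reach the closed point, I would select by prime avoidance a general element $b \in \pp$ that is a nonzerodivisor on $A$ and on finitely many graded pieces of $\sM$ controlling $P_K$, pass to $A/(b)$, and invoke the induction hypothesis. The comparison across this quotient is handled via the Risler-Teissier description of $e(\bn; \sM(\pp))$ as a Hilbert-Samuel multiplicity with respect to a general filter-regular multihomogeneous sequence in $\sT$, which transfers the problem to the classical behavior of Hilbert-Samuel multiplicities under specialization by a single parameter.
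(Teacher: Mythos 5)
Your scaffolding matches the paper's: the Nagata criterion, generic flatness to get generic constancy of the Hilbert polynomial (condition (b) of Nagata), and Nakayama's lemma to get the pointwise inequality of Hilbert functions, which handles part (i) and the cases of (ii) where the relevant dimensions differ. You also correctly flag the crux of (ii) — that the pointwise inequality $P_{\sM(\pp)} \ge P_{\sM(\qqq)}$ for $\nu \gg \mathbf{0}$ does not force a coefficientwise comparison of the top-degree parts — which is exactly the pitfall the paper singles out (with the example $(x-y)^2$).

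Where you diverge is the resolution of this hard case, and your version has a gap. The paper does \emph{not} do Noetherian induction. After reducing to $A$ a local domain with maximal ideal $\pp$ and $\qqq=0$, it passes to the faithfully flat extension $A[y]_{\pp A[y]}$ so the residue field is not algebraic over a finite field, and then invokes \cite[Lemma 2.6]{PTUV}: for a multihomogeneous ideal $\mathfrak{J}\subset\sT$, one can choose $z\in\mathfrak{J}$ whose image is \emph{simultaneously} general in $\mathfrak{J}\sT(\pp)$ and $\mathfrak{J}\sT(\qqq)$. Iterating this gives a single sequence $z_1,\ldots,z_r$ in $\sT$, of the prescribed multidegree pattern, that is filter-regular on \emph{both} $\sM(\pp)$ and $\sM(\qqq)$; then \cite[Lemma 3.9]{MIXED_MULT} reduces $e(\bn;-)$ to $e(\mathbf{0};-)$ on both sides using the \emph{same} sequence, and one more Nakayama comparison finishes the proof in one pass, for arbitrary $\qqq\subseteq\pp$. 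Your proposal replaces this with a Noetherian induction to a ``single parameter'' specialization and an appeal to the ``classical behavior of Hilbert--Samuel multiplicities'' under such a specialization. This defers, rather than resolves, the difficulty: that one-dimensional statement is itself an instance of the theorem, and to prove it by the Risler--Teissier reduction you would still need a filter-regular sequence that works for \emph{both} the generic and closed fibers at once. You never address how to arrange this, yet that simultaneous genericity is precisely the nontrivial ingredient — if you pick the sequence general for one fiber only, the Nakayama comparison between $[\sM/(z_1,\ldots,z_r)\sM\otimes_A A_\pp]_\nu$ and $[\sM/(z_1,\ldots,z_r)\sM\otimes_A A_\qqq]_\nu$ no longer computes both mixed multiplicities. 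A secondary, smaller issue: prime avoidance gives you $b\in\pp$ avoiding a finite set of primes, but the set of associated primes of all graded pieces $[\sM]_\nu$ over $A$ is a priori infinite; the paper sidesteps this by applying generic freeness graded-piecewise rather than trying to pick a single $b$ regular on graded pieces.
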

	\begin{proof}
		We prove both statements by utilizing the topological Nagata criterion (see \autoref{rem_top_Nagata}) and Grothendieck's Generic Freeness Lemma (see, e.g., \cite[Theorem 24.1]{MATSUMURA}, \cite[Theorem 14.4]{EISEN_COMM}).
		
		Fix elements $d \in \ZZ$, $\bn = (n_1,\ldots,n_p) \in \ZZ_{\ge 0}^p$ and $e \in \ZZ \cup \{\infty\}$.
		We need to show that 
		$$
		U_d := \big\{\pp \in \Spec(A) \mid d_{++}^\sM(\pp) \le d\big\} \quad \text{ and } \quad V_{\bn,e} := \big\{\pp \in \Spec(A) \mid e_{\bn}^\sM(\pp) \le e\big\}
		$$	
		are open subsets of $\Spec(A)$.
		
		First, we verify condition (i) of \autoref{rem_top_Nagata} for both subsets $U_d$ and $V_{\bn,e}$.
		Let $\qqq \in \Spec(A)$ and  $\overline{A} := A/\qqq$.
		The Generic Freeness Lemma applied to the module $\overline{\sM} := \sM/\qqq \sM$ gives a nonzero element $0 \neq a \in \overline{A}$ such that each graded component of $\overline{\sM}_a$ is a finitely generated free $\overline{A}_a$-module.
		It follows that $P_{\sM \otimes_A \kappa(\pp)} = P_{\sM \otimes_A \kappa(\qqq)}$ for every $\pp \in D(a) \subset V(\qqq) \subset \Spec(A)$, which verifies the validity of condition (i) of \autoref{rem_top_Nagata} for both $U_d$ and $V_{\bn,e}$.
		
		Next, we show that condition (ii) of \autoref{rem_top_Nagata} also holds.
		Due to Nakayama's lemma, for any two primes $\pp, \qqq \in \Spec(A)$ with $\pp \supseteq \qqq$, we have that 
		$$
		\dim_{\kappa(\pp)}\left(\left[\sM \otimes_A \kappa(\pp)\right]_\nu\right) = \mu_{A_\pp}\left(\left[\sM \otimes_A A_\pp\right]_\nu\right) \ge \mu_{A_\qqq}\left(\left[\sM \otimes_A A_\qqq\right]_\nu\right) = \dim_{\kappa(\qqq)}\left(\left[\sM \otimes_A \kappa(\qqq)\right]_\nu\right)
		$$
		for all $\nu \in \ZZ^p$.
		The dimension of the relevant support equals the degree of the Hilbert polynomial, and the latter can be read-off from the Hilbert function.
		For any $\pp, \qqq \in \Spec(A)$ with $\pp \supseteq \qqq$, it follows that $P_{\sM\otimes_A \kappa(\pp)}(\nu) \ge P_{\sM\otimes_A \kappa(\qqq)}(\nu)$ for all $\nu \gg \mathbf{0}$, and so $d_{++}^\sM(\pp) \ge d_{++}^\sM(\qqq)$. 
		This shows that condition (ii) of \autoref{rem_top_Nagata} is satisfied for the subset $U_d$.
		
		Given two primes $\pp, \qqq \in \Spec(A)$ with $\pp \supseteq \qqq$ and $d_{++}^\sM(\pp) > d_{++}^\sM(\qqq)$, we easily check from the definition of the function $e_\bn^\sM$ that $e_\bn^\sM(\pp) \ge e_\bn^\sM(\qqq)$.
		
		Next, consider the case $\pp, \qqq \in \Spec(A)$ with $\pp \supseteq \qqq$ and $d_{++}^\sM(\pp) = d_{++}^\sM(\qqq)$. 
		(That $P_{\sM\otimes_A \kappa(\pp)}(\nu) - P_{\sM\otimes_A \kappa(\qqq)}(\nu) \ge 0$ for all $\nu \gg \mathbf{0}$ does not necessarily imply that the coefficients of the monomials of highest degree of $P_{\sM\otimes_A \kappa(\pp)}$ are bigger or equal than the ones of $P_{\sM\otimes_A \kappa(\qqq)}$; for instance, $f(x,y) = (x-y)^2= x^2 - 2xy + y^2 \in \QQ[x,y]$.)
		
		Let $r := |\bn|$.
		We only need to consider the case where $r = d_{++}^\sM(\pp) = d_{++}^\sM(\qqq)$.
		Notice that we can reduce modulo $\qqq$ and localize at $\pp$. 
		Hence we assume $A$ is a local domain with maximal ideal $\pp$ and $\qqq = 0$.
		By utilizing the faithfully flat extension $A \rightarrow A[y]_{\pp A[y]}$, we may assume that the residue field of $A$ is not an algebraic extension of a finite field. 
		From \cite[Lemma 2.6]{PTUV}, for any multihomogeneous ideal $\mathfrak{J} \subset \sT$, we can choose an element $z \in \mathfrak{J}$ whose image is general in both $\mathfrak{J}\sT(\pp)$ and $\mathfrak{J}\sT(\qqq)$.
		Therefore, by prime avoidance, there exists a sequence of homogeneous elements $z_1, \ldots,z_r$ in $\sT$
		such that the following three conditions are satisfied: 
		\begin{enumerate}
			\item each $z_j \in \sT$ has degree $\deg(z_j) = \ee_{l_j} \in  \ZZ_{\ge 0}^p$ where $1 \le l_j \le p$;
			\item $n_i$ equals the number $|\{  j  \mid 1 \le j \le r \text{ and } l_j = i \}|$;
			\item $\lbrace z_1,\ldots,z_r\rbrace \sT(\pp)$ and $\lbrace z_1,\ldots,z_r\rbrace \sT(\qqq)$ are filter-regular sequences on the modules $\sM \otimes_A \kappa(\pp)$ and $\sM\otimes_A \kappa(\qqq)$, respectively.
		\end{enumerate}
		To simplify notation, let $\sM_{j,\pp} := \sM/(z_1,\ldots,z_{j})\sM \otimes_A \kappa(\pp)$.
		By successively applying \cite[Lemma 3.9]{MIXED_MULT}, we obtain
		$$
		e\left(\bn; \sM \otimes_A \kappa(\pp)\right) =  e\left(\mathbf{0}; \sM_{r,\pp}\right)
		\quad\text{ and }\quad
		e\left(\bn; \sM \otimes_A \kappa(\qqq)\right) =  e\left(\mathbf{0}; \sM_{r,\qqq}\right).
		$$	
		We choose $\mathbf{0} \ll \nu \in \ZZ_{\ge0}^p$ with the property that $e\left(\mathbf{0}; \sM_{r,\pp}\right) = \dim_{\kappa(\pp)}\left(\left[\sM_{r,\pp}\right]_\nu\right)$ and $e\left(\mathbf{0}; \sM_{r,\qqq}\right) = \dim_{\kappa(\qqq)}\left(\left[\sM_{r,\qqq}\right]_\nu\right)$.
		Finally, Nakayama's lemma yields that
		\begin{align*}
			&e\left(\bn; \sM \otimes_A \kappa(\pp)\right) = e\left(\mathbf{0}; \sM_{r,\pp}\right) = \dim_{\kappa(\pp)}\left(\left[\sM_{r,\pp}\right]_\nu\right) = \mu_{A_\pp}\left(\left[\sM/(z_1,\ldots,z_r)\sM \otimes_A A_\pp\right]_\nu\right) \\
			&\quad\ge \mu_{A_\qqq}\left(\left[\sM/(z_1,\ldots,z_r)\sM \otimes_A A_\qqq\right]_\nu\right) = \dim_{\kappa(\qqq)}\left(\left[\sM_{r,\qqq}\right]_\nu\right) = e\left(\mathbf{0}; \sM_{r,\qqq}\right) =  e\left(\bn; \sM \otimes_A \kappa(\qqq)\right).
		\end{align*}
		Therefore, the subset $V_{\bn,e}$ satisfies condition (ii) of \autoref{rem_top_Nagata}.
		This completes the proof of the theorem.
	\end{proof}

	We restate the above theorem for the case of multidegrees. 
	As before, we embed $\sX$ as a closed subscheme of a multiprojective space $\PP_A := \PP_A^{m_1} \times_A \cdots \times_A \PP_A^{m_p}$.
	We seek to study the multidegrees of the fibers $\sX_\pp := \sX \times_{\Spec(A)} \Spec(\kappa(\pp)) = \multProj(\sT(\pp)) \subset \PP_\pp := \PP_A \times_{\Spec(A)} \Spec(\kappa(\pp))$. 
	
	\begin{definition}
		\label{def_multdeg_fib}
		We define the functions 
		$$
		d^\sX : \Spec(A) \rightarrow \ZZ, \quad \pp \mapsto \dim\left(\sX_\pp\right)
		$$
		and 
		$$
		\deg_{\sX,\PP_A}^\bn : \Spec(A) \rightarrow \ZZ \cup \{\infty\},  \qquad \pp \mapsto \begin{cases}
			\deg_{\PP_\pp}^\bn(\sX_\pp) & \text{ if } |\bn| = \dim\left(\sX_\pp\right) \\
			0 & \text{ if } |\bn| > \dim\left(\sX_\pp\right) \\
			\infty & \text{ if } |\bn| < \dim\left(\sX_\pp\right)
		\end{cases}
		$$
		for every $\bn \in \ZZ_{\ge0}^p$.
	\end{definition}
	
	We have the following direct consequence of \autoref{thm_mixed_mult_fib}.
	
	\begin{corollary}
		\label{cor_multdeg_fib}
		Assume \autoref{setup_fibers_specialization}.
		Then the following statements hold: 
		\begin{enumerate}[\rm (i)]
			\item $d^\sX : \Spec(A) \rightarrow \ZZ$ is an upper semicontinuous function.
			\item $\deg_{\sX,\PP_A}^\bn : \Spec(A) \rightarrow \ZZ \cup \{\infty\}$ is an upper semicontinuous function for every $\bn \in \ZZ_{\ge0}^p$. 
		\end{enumerate}
	\end{corollary}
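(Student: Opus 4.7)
The plan is to deduce both statements directly from \autoref{thm_mixed_mult_fib} applied to the $\ZZ^p$-graded $\sT$-module $\sM = \sT$ itself, the point being that for $\sT$ viewed as a module over itself the relevant support \emph{is} the multiprojective scheme. More precisely, for each $\pp \in \Spec(A)$ the finitely generated standard $\ZZ_{\ge 0}^p$-graded $\kappa(\pp)$-algebra $\sT(\pp) = \sT \otimes_A \kappa(\pp)$ satisfies
$$
\Supp_{++}\big(\sT(\pp)\big) \;=\; \multProj\big(\sT(\pp)\big) \;=\; \sX_\pp,
$$
so $d_{++}^{\sT}(\pp) = \dim(\sX_\pp) = d^\sX(\pp)$ for all $\pp$. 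Part (i) is then immediate from \autoref{thm_mixed_mult_fib}(i).

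For part (ii), by \autoref{def_multdeg}(ii) applied over the base field $\kappa(\pp)$, whenever $|\bn| = \dim(\sX_\pp)$ one has $\deg_{\PP_\pp}^\bn(\sX_\pp) = e\big(\bn;\sT(\pp)\big)$. The next step is to compare \autoref{def_mixed_mult_fib} (with $\sM = \sT$) against \autoref{def_multdeg_fib} branch by branch: both functions return the mixed multiplicity precisely when $|\bn|$ equals the common dimension $d^\sX(\pp) = d_{++}^{\sT}(\pp)$, both return $0$ when $|\bn|$ exceeds this dimension, and both return $\infty$ when $|\bn|$ falls below it. Hence $\deg_{\sX,\PP_A}^\bn = e_\bn^{\sT}$ as functions on $\Spec(A)$, and part (ii) follows from \autoref{thm_mixed_mult_fib}(ii).

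There is no real obstacle here; the only thing to check is the bookkeeping of the three case-splits in \autoref{def_mixed_mult_fib} and \autoref{def_multdeg_fib}, which is routine once the identification $\Supp_{++}(\sT(\pp)) = \sX_\pp$ is in place. This is exactly the sense in which the corollary is a \emph{direct} consequence of \autoref{thm_mixed_mult_fib}: no new semicontinuity argument, no extra use of the Nagata criterion or Grothendieck's Generic Freeness Lemma, is required.
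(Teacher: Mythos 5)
Your proposal is correct and coincides with the paper's intent: the paper states the corollary is a ``direct consequence'' of \autoref{thm_mixed_mult_fib} and gives no further argument, and you have supplied precisely the routine verification that taking $\sM = \sT$ identifies $\Supp_{++}(\sT(\pp))$ with $\sX_\pp$ and hence matches $e_\bn^{\sT}$ with $\deg_{\sX,\PP_A}^\bn$ branch by branch. No gap.
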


	We quickly revisit the notion of specialization of a module. 
	Here we follow the same setting and notations as in \cite{SPECIALIZATION_MARC_ARON}.
	Let $\sM$ be a finitely generated torsionless $\ZZ^p$-graded $\sT$-module, with a fixed injection $\iota : \sM \hookrightarrow \sF$ into a free $\ZZ^p$-graded $\sT$-module of finite rank.
	For any $\pp \in \Spec(A)$, the \emph{specialization of $\sM$ with respect to $\pp$} is defined as 
	$$
	\SSS_\pp(\sM) \,:=\, \IM\Big( \iota \otimes_A \kappa(\pp) : \sM \otimes_A \kappa(\pp) \rightarrow \sF \otimes_A \kappa(\pp)\Big).
	$$
	We now consider the following function 
	$$
	\SSS e_\bn^\sM : \Spec(A) \rightarrow \ZZ,  \qquad \pp \mapsto 
	e\left(\bn; \SSS_\pp(\sM) \right) 
	$$
	for every $\bn \in \ZZ_{\ge0}^p$ with $|\bn| \ge \dim\left(\Supp_{++}\left(\SSS_{\pp}(\sM)\right)\right)$.

	The following result deals with the behavior of mixed multiplicities with respect to specializations.
	
	\begin{corollary}
		\label{cor_specializ_mods}
		Assume \autoref{setup_fibers_specialization} and that each graded component of $\sT$ is a free $A$-module.
		Let $\sM$ be a finitely generated torsionless $\ZZ^p$-graded $\sT$-module, with a fixed injection $\iota : \sM \hookrightarrow \sF$ into a free $\ZZ^p$-graded $\sT$-module of finite rank.
		Let $r$ be the common dimension $\dim(\Supp_{++}(\sF\otimes_A\kappa(\pp)))$ for all $\pp \in \Spec(A)$.
		Then the function 
		$$
		\SSS e_\bn^\sM : \Spec(A) \rightarrow \ZZ 
		$$ 
		is lower semicontinuous for every $\bn \in \ZZ_{\ge0}^p$ with $|\bn| \ge r$. 
	\end{corollary}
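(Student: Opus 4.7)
My plan is to reduce the statement to the upper semicontinuity result \autoref{thm_mixed_mult_fib} applied to the cokernel $\sF/\sM$. The injection $\iota : \sM \hookrightarrow \sF$ sits in a short exact sequence $0 \to \sM \to \sF \to \sF/\sM \to 0$, and although tensoring with $\kappa(\pp)$ is only right exact, the very definition $\SSS_\pp(\sM) = \IM(\iota \otimes_A \kappa(\pp))$ yields a short exact sequence
\begin{equation*}
	0 \,\to\, \SSS_\pp(\sM) \,\to\, \sF \otimes_A \kappa(\pp) \,\to\, (\sF/\sM) \otimes_A \kappa(\pp) \,\to\, 0
\end{equation*}
of finitely generated $\ZZ^p$-graded $\sT(\pp)$-modules for every $\pp \in \Spec(A)$.

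Next I would observe that the Hilbert polynomial $P_{\sF \otimes_A \kappa(\pp)}$ is independent of $\pp$. Writing $\sF = \bigoplus_{j=1}^{s} \sT(-\mu_j)$ as a free graded $\sT$-module of finite rank, each graded component $[\sF]_\nu$ is a finite direct sum of shifted graded components of $\sT$, which by hypothesis are finitely generated free $A$-modules. Hence $\dim_{\kappa(\pp)}\bigl([\sF]_\nu \otimes_A \kappa(\pp)\bigr)$ is independent of $\pp$, so $e(\bn; \sF \otimes_A \kappa(\pp))$ is a constant $c_\bn$ as $\pp$ varies; note $c_\bn = 0$ whenever $|\bn| > r$.

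Additivity of Hilbert polynomials along the short exact sequence then gives
\[
c_\bn \;=\; \SSS e_\bn^\sM(\pp) \,+\, e\bigl(\bn;\, (\sF/\sM) \otimes_A \kappa(\pp)\bigr).
\]
Right exactness implies that both $\Supp_{++}\bigl((\sF/\sM)\otimes_A\kappa(\pp)\bigr)$ and $\Supp_{++}\bigl(\SSS_\pp(\sM)\bigr)$ are contained in $\Supp_{++}(\sF \otimes_A \kappa(\pp))$ and hence have dimension at most $r \le |\bn|$; by \autoref{def_mixed_mult_fib} this identifies both terms on the right as finite nonnegative integers, never $\infty$, and the latter is precisely $e_\bn^{\sF/\sM}(\pp)$. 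Rearranging produces the identity
\[
\SSS e_\bn^\sM(\pp) \;=\; c_\bn - e_\bn^{\sF/\sM}(\pp)
\]
of functions on $\Spec(A)$, and since $e_\bn^{\sF/\sM}$ is upper semicontinuous by \autoref{thm_mixed_mult_fib}(ii), $\SSS e_\bn^\sM$ is lower semicontinuous.

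I do not foresee a major obstacle; the only delicate point is the bookkeeping around the piecewise definition of $e_\bn^{\bullet}$ in \autoref{def_mixed_mult_fib}. The role of the hypothesis $|\bn| \ge r$ together with the freeness of the graded components of $\sT$ is precisely to render $P_{\sF \otimes_A \kappa(\pp)}$ fiber-independent and to rule out the $\infty$ case, so that the additivity of Hilbert polynomials translates cleanly into the claimed relation between $\SSS e_\bn^\sM$ and $e_\bn^{\sF/\sM}$.
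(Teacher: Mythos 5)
Your proof is correct and follows essentially the same route as the paper's: the same short exact sequence $0 \to \SSS_\pp(\sM) \to \sF \otimes_A \kappa(\pp) \to (\sF/\sM)\otimes_A\kappa(\pp) \to 0$, additivity of mixed multiplicities, constancy of $e_\bn^\sF$, and upper semicontinuity of $e_\bn^{\sF/\sM}$ from \autoref{thm_mixed_mult_fib}. You merely spell out more explicitly why $e_\bn^\sF$ is constant (freeness of graded components) and why the $\infty$ case is excluded, both of which the paper leaves implicit.
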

	\begin{proof}
		For any $\pp \in \Spec(A)$, we have the short exact sequence 
		$$
		0 \rightarrow \SSS_\pp(\sM) \rightarrow \sF \otimes_A \kappa(\pp) \rightarrow \sF/\sM \otimes_A \kappa(\pp) \rightarrow 0.
		$$
		From the additivity of mixed multiplicities, we get $e_\bn^\sF(\pp) = e_\bn^{\sF/\sM}(\pp) + \SSS e_\bn^\sM(\pp)$.
		Since $e_\bn^\sF$ is a constant function by assumption and $e_{\bn}^{\sF/\sM}$ is upper semicontinuous by \autoref{thm_mixed_mult_fib}, the result of the corollary follows. 
	\end{proof}

	\section{Rational maps and their specializations}
	\label{sect_rat_maps}

	Here we concentrate on a specialization process of rational maps.
	The next setup is now in place. 
	
	\begin{setup}
		\label{setup_specialization_rat_maps}
		Let $r < s$ be two positive integers.
		Let $A$ be a Noetherian domain, $S = A[x_0,\ldots,x_r]$ be a standard graded polynomial ring, $\PP_A^r = \Proj(S)$, and $\mm = (x_0,\ldots,x_r) \subset S$ be the graded irrelevant ideal.
		Let $\FF :  \PP_A^r \dashrightarrow \PP_A^s$ be a rational map with representative $\mathbf{f} = (f_0:\cdots:f_s)$ such that $\{f_0,\ldots,f_s\} \subset S$ are homogeneous elements of degree $\delta > 0$.
	\end{setup}

	We specialize this rational map as follows.
	For any $\pp \in \Spec(A)$, we get the rational map $$\FF(\pp) \,:\, \PP_{\kappa(\pp)}^r \dashrightarrow \PP_{\kappa(\pp)}^s$$ with representative 
	$
	\pi_\pp(\mathbf{f}) = \left( \pi_\pp(f_0) : \cdots : \pi_\pp(f_s) \right)
	$
	where $\pi_\pp(f_i)$ is the image of $f_i$ under the natural map $\pi_\pp : S \rightarrow S(\pp)$.
	
	Let $I = (f_0,\ldots,f_s) \subset S$ be the base ideal of the rational map $\FF : \PP_A^r  \dashrightarrow \PP_A^s$.
	The closure of the graph of $\FF$ is given as $\Gamma = \biProj(\Rees(I)) \subset \PP_A^r \times_A \PP_A^s$ where $\Rees(I) := \bigoplus_{n=0}^\infty I^nT^n \subset S[T]$ is the Rees algebra of $I$.
	As customary, $\Rees(I)$ is presented as a quotient of a standard bigraded polynomial ring $\sT := S \otimes_A A[y_0,\ldots,y_s]$ by using the $A$-algebra homomorphism 
	$$
	\sT \twoheadrightarrow \Rees(I), \quad x_i \mapsto x_i, \; y_j \mapsto f_jt.
	$$
	We have the equalities $I(\pp) = \left( \pi_\pp(f_0), \ldots, \pi_\pp(f_s) \right) \subset S(\pp)$ and $(I^k)(\pp) = I(\pp)^k \subset S(\pp)$ for all $\pp \in\Spec(A)$ and $k \ge 0$.
	For any $\pp \in \Spec(A)$, let $\Gamma(\pp) \subset \PP_{\kappa(\pp)}^r \times_{\kappa(\pp)} \PP_{\kappa(\pp)}^s$ and $Y(\pp) \subset \PP_{\kappa(\pp)}^s$ be the closures of the graph and the image of the rational map $\FF(\pp)$.
	We have that $\dim(\Gamma(\pp)) \le r$ and $\dim(Y(\pp)) \le r$ for all $\pp \in \Spec(A)$.
	We also consider the \emph{j-multiplicity} of ideal $I(\pp) \subset S(\pp)$ for all $\pp \in \Spec(A)$ (see \cite{ACHILLES_MANARESI_J_MULT}, \cite[\S 6.1]{FLENNER_O_CARROLL_VOGEL}).

	\begin{definition}
		\label{def_spec_rat_map}
		We have the following three functions:  
		$
		j^I : \Spec(A) \rightarrow \ZZ, \; \pp \mapsto j\left(I(\pp)\right),
		$
		$$
		\degIm^\FF : \Spec(A) \rightarrow \ZZ, \qquad \pp \mapsto \begin{cases}
			\deg_{\PP_{\kappa(\pp)}^s}\left(Y(\pp)\right) & \text{ if } \dim(Y(\pp)) = r \\
			0 & \text{ if } \dim(Y(\pp)) < r
		\end{cases}
		$$
		and 
		$d_i^\FF : \Spec(A) \rightarrow \ZZ,\;d_i\left(\FF(\pp)\right)$ for all $0 \le i \le r$.
	\end{definition}	
	
	Our main result regarding these functions is the following theorem.

	\begin{theorem}
		\label{thm_specialization_rat_map}
		Assume \autoref{setup_specialization_rat_maps}.
		Then the following statements hold: 
		\begin{enumerate}[\rm (i)]
			\item $\degIm^\FF : \Spec(A) \rightarrow \ZZ$ is a lower semicontinuous function.
			\item $d_i^\FF : \Spec(A) \rightarrow \ZZ$ is a lower semicontinuous function for all $0 \le i \le r$.
			\item $j^I : \Spec(A) \rightarrow \ZZ$ is a lower semicontinuous function.
		\end{enumerate}
	\end{theorem}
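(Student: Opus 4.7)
The unifying idea is to express each of the three functions as $\SSS e_\bn^\sM(\pp)$ for a suitable embedding $\iota : \sM \hookrightarrow \sF$ of bigraded $\sT$-modules, and to deduce lower semicontinuity from \autoref{cor_specializ_mods}. The feature of the proof of that corollary that I will exploit is that it does not really require $\sF$ to be \emph{free} over $\sT$ of finite rank; it only needs that $\sF$ have $\pp$-independent mixed multiplicities, so that the additivity of mixed multiplicities along
\[
0 \to \SSS_\pp(\sM) \to \sF \otimes_A \kappa(\pp) \to \sF/\sM \otimes_A \kappa(\pp) \to 0,
\]
combined with the upper semicontinuity of $e_\bn^{\sF/\sM}$ from \autoref{thm_mixed_mult_fib}, forces $\SSS e_\bn^\sM$ to be lower semicontinuous.

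For (ii), the starting point is the identification
\[
d_i(\FF(\pp)) \,=\, e\bigl((i, r-i);\,\Rees(I(\pp))\bigr)
\]
coming from the fact that $\Gamma(\pp) = \biProj(\Rees(I(\pp)))$. I then use the natural bigraded inclusion $\Rees(I) = \bigoplus_n I^n t^n \hookrightarrow S[t] = \bigoplus_n S \cdot t^n$. The ambient module $S[t]$ has $S[t] \otimes_A \kappa(\pp) = S(\pp)[t]$, a polynomial ring whose bigraded Hilbert polynomial (and therefore its mixed multiplicities) is independent of $\pp$. Moreover, the specialization along this embedding satisfies $\SSS_\pp(\Rees(I)) = \IM\bigl(\Rees(I) \otimes_A \kappa(\pp) \to S(\pp)[t]\bigr) = \bigoplus_n I(\pp)^n t^n = \Rees(I(\pp))$. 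The generalized argument of \autoref{cor_specializ_mods} applied with $\bn = (i, r-i)$ then yields (ii).

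For (i), when $\dim Y(\pp) = r$ the degree $\deg_{\PP^s_{\kappa(\pp)}}(Y(\pp))$ equals the multiplicity of the special fiber ring $\FF(I(\pp)) = \kappa(\pp)[\pi_\pp(f_0) t, \ldots, \pi_\pp(f_s) t]$; identifying $\FF(I) \cong A[f_0 t, \ldots, f_s t] \subset S[t]$ gives $\SSS_\pp(\FF(I)) = \FF(I(\pp))$, and the same specialization argument proves lower semicontinuity on the open set $\{\pp : \dim Y(\pp) = r\}$. On the complementary closed locus, closed because $d^\sX$ is upper semicontinuous by \autoref{cor_multdeg_fib}, the function $\degIm^\FF$ vanishes, which is consistent with lower semicontinuity. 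Part (iii) is handled analogously, writing $j(I(\pp))$ as a mixed multiplicity of modules built from $\Rees(I(\pp))$ and $\gr_{I(\pp)}(S(\pp))$ (cf.\ \cite[\S 6.1]{FLENNER_O_CARROLL_VOGEL}). The main technical obstacle will be verifying, in each of the three cases, that the chosen ambient $\sF$ has $\pp$-independent mixed multiplicities in the relevant bidegrees and that the dimensional comparisons implicit in the additivity formula hold uniformly in $\pp$, a point where the standing hypotheses (that $A$ is a Noetherian domain and that the $f_j$ share a common degree $\delta > 0$) enter.
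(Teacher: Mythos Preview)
Your plan has a genuine gap in part (ii). You correctly observe that the proof of \autoref{cor_specializ_mods} does not need $\sF$ to be free of finite rank, only that $e_\bn^\sF$ be constant. But the proof also needs $e_\bn^{\sF/\sM}$ to be upper semicontinuous, and for that it invokes \autoref{thm_mixed_mult_fib}, which requires $\sF/\sM$ to be a \emph{finitely generated} $\sT$-module. With $\sM=\Rees(I)$ and $\sF=S[t]$ there is no standard bigraded $A$-algebra $\sT$ over which both are finitely generated modules: over $\sT=S[y_0,\ldots,y_s]$ (or over $\Rees(I)$) the module $S[t]$ is not finitely generated (already the bidegree-$(0,n)$ pieces $A\cdot t^n$ for $n\ge 1$ cannot be reached from lower ones), while over $\sT=S[t]$ the Rees algebra is not even a submodule, since it is not closed under multiplication by $t$. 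So $S[t]/\Rees(I)=\bigoplus_n S/I^n$ lies outside the scope of \autoref{thm_mixed_mult_fib}, and the pointwise Hilbert-function inequality you would obtain from Nakayama does \emph{not} by itself force an inequality of individual mixed multiplicities in the bigraded setting --- this is exactly the parenthetical warning in the proof of \autoref{thm_mixed_mult_fib} about $(x-y)^2$.

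The paper does use the same embedding $\Rees(I)\hookrightarrow S[t]$, but only to check condition (i) of the topological Nagata criterion, via the Hochster--Roberts form of generic freeness applied to the inclusion of algebras $\Rees_{\overline{S}}(\overline{I})\hookrightarrow\overline{S}[t]$. For condition (ii) it does not appeal to \autoref{cor_specializ_mods}; instead it cuts first by general linear forms $z_1,\ldots,z_i\in\mm$ (so that the $z_j$ are regular on both $S(\pp)$ and $S(\qqq)$), reducing $d_i^\FF$ to a \emph{single}-index limit $\lim_n \dim_{\kappa(\bullet)}[J_\bullet^n]_{m+n\delta}/(n^{r-i}/(r-i)!)$, and then compares these via Nakayama on $S/(z_1,\ldots,z_i,I^n)$. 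This cutting step is precisely what converts the bivariate comparison into a univariate one where the Hilbert-function inequality does control the leading coefficient. Finally, part (iii) is immediate from $j^I(\pp)=\delta\cdot d_0^\FF(\pp)$ (\cite[Theorem 5.3]{KPU_blowup_fibers}); no separate module-theoretic argument is needed. (A minor side remark on your handling of (i): by \autoref{cor_multdeg_fib} the locus $\{\dim Y(\pp)=r\}$ is \emph{closed}, not open, so the casework you sketch needs to be rephrased --- though the paper's direct Hilbert-function argument for $L_\pp=\bigoplus_n[I(\pp)^n]_{n\delta}$ avoids this issue entirely.)
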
	
	\begin{proof}
		By \cite[Theorem 5.3]{KPU_blowup_fibers}, we have that $j^I(\pp) = \delta \cdot d_0^\FF(\pp)$ for all $\pp \in \Spec(A)$.
		Thus, we only need to prove parts (i) and (ii) of the theorem. 
		
		Fix $0 \le i \le r$, $e \in \ZZ$ and $h \in \ZZ$. 
		It remains to show that 
		$$
		D_e := \big\{\pp \in \Spec(A) \mid \degIm^\FF(\pp) \ge e\big\} \quad \text{ and } \quad E_{i,h} := \big\{\pp \in \Spec(A) \mid d_i^\FF(\pp) \ge h\big\}
		$$
		are open subsets of $\Spec(A)$.
		Again, to prove this we utilize a combination of the topological Nagata criterion and the Generic Freeness Lemma.
		
		First, we verify condition (i) of \autoref{rem_top_Nagata} for both $D_e$ and $E_{i,h}$.
		Let $\qqq \in \Spec(A)$, and set $\overline{A} = A/\qqq$, $\overline{S} = S/\qqq S$ and $\overline{I} = I\overline{S} \subset \overline{S}$.
		We use the version of the Generic Freeness Lemma  given in \cite[Lemma 8.1]{HOCHSTER_ROBERTS_INVARIANTS} applied to the inclusion of algebras $\Rees_{\overline{S}}\left(\overline{I}\right) \hookrightarrow \overline{S}[t]$, and we find a nonzero element $0 \neq a \in \overline{A}$ such that each graded component of $\overline{S}[t]/\Rees_{\overline{S}}\left(\overline{I}\right) \otimes_{\overline{A}} \overline{A}_a$ is a finitely generated free $\overline{A}_a$-module (also, see \cite[Theorem 3.5]{SPECIALIZATION_MARC_ARON}).
		For every $\pp \in D(a) \subset V(\qqq) \subset \Spec(A)$, we obtain that $\dim_{\kappa(\pp)}\left(\left[I(\pp)^n\right]_\nu\right) = \dim_{\kappa(\qqq)}\left(\left[I(\qqq)^n\right]_\nu\right)$ for all $n \ge 0$ and $\nu \in \ZZ$.
		Accordingly, condition (i) of \autoref{rem_top_Nagata} holds for both $D_e$ and $E_{i,h}$.
		
		Next, we show that condition (ii) of \autoref{rem_top_Nagata} also holds for $D_e$ and $E_{i,h}$.
		
		For any $\pp \in \Spec(A)$, we have that $\degIm^\FF(\pp) = e_{r+1}\left(L_\pp\right)$ and $d_i^\FF(\pp) = e\left(i, r-i; \Rees_{S(\pp)}(I(\pp))\right)$, where $L_\pp := \kappa(\pp)\left[\pi_\pp(f_0), \ldots, \pi_\pp(f_s)\right] = \bigoplus_{n=0}^\infty \left[I(\pp)^n\right]_{n\delta}$.
		For each $n \ge 0$, we have a short exact sequence 
		$$
		0 \rightarrow I(\pp)^n \rightarrow S(\pp) \rightarrow S/I^n \otimes_A \kappa(\pp) \rightarrow 0.
		$$
		Since we know that $\dim_{\kappa(\pp)}\left(\left[S/I^n \otimes_A \kappa(\pp)\right]_\nu\right) \ge \dim_{\kappa(\qqq)}\left(\left[S/I^n \otimes_A \kappa(\qqq)\right]_\nu\right)$ for all $n \ge 0$, $\nu \in \ZZ$ and $\pp, \qqq \in \Spec(A)$ with $\pp \supseteq \qqq$, it follows that $D_e$ satisfies condition (ii) of \autoref{rem_top_Nagata}.
		
		Fix two primes $\pp, \qqq \in \Spec(A)$ with $\pp \supseteq \qqq$.
		As in the proof of \autoref{thm_mixed_mult_fib}, we may assume that $\qqq = 0$ and that $A$ is a local domain with maximal ideal $\pp$.
		Moreover, by making a purely transcendental field extension, 
		we may assume that for any multihomogeneous ideal $\mathfrak{J} \subset \sT$, we can choose an element $z \in \mathfrak{J}$ whose image is general in both $\mathfrak{J}\sT(\pp)$ and $\mathfrak{J}\sT(\qqq)$ (see \cite[Lemma 2.6]{PTUV}).

		By applying \cite[Proposition 5.6]{MIXED_MULT}, we find a sequence $\{z_1, \ldots,z_i\} \subset \mm \subset S \subset \sT$ of homogeneous elements of degree one such that
		$$
		d_i^\FF(\pp) =  e\left(0,r-i;\, \Rees_{R_\pp}(J_\pp)\right)
		\quad \text{ and } \quad 
		d_i^\FF(\qqq) =  e\left(0,r-i;\, \Rees_{R_\qqq}(J_\qqq)\right)
		$$ 
		where $R_\pp := S(\pp)/(z_1,\ldots,z_i)S(\pp)$, $R_\qqq := S(\qqq)/(z_1,\ldots,z_i)S(\qqq)$, $J_\pp := IR_\pp \subset R_\pp$ and $J_\qqq := IR_\qqq \subset R_\qqq$.
		Since $i \le r$, we may further assume that $\{z_1,\ldots,z_i\} S(\pp)$ and $\{z_1,\ldots,z_i\} S(\qqq)$ are regular sequences on the polynomial rings $S(\pp)$ and $S(\qqq)$, respectively.
		
		There exists a positive integer $m > 0$ such that
		$$
		e\left(0,r-i;\, \Rees_{R_\pp}(J_\pp)\right) = \lim_{n \to \infty} \frac{\dim_{\kappa(\pp)}\left(\left[\Rees_{R_\pp}(J_\pp)\right]_{(m,n)}\right)}{n^{r-i}/(r-i)!} = \lim_{n \to \infty} \frac{\dim_{\kappa(\pp)}\left(\left[J_\pp^n\right]_{m+n\delta}\right)}{n^{r-i}/(r-i)!}
		$$
		and 
		$$
		e\left(0,r-i;\, \Rees_{R_\qqq}(J_\qqq)\right) = \lim_{n \to \infty} \frac{\dim_{\kappa(\qqq)}\left(\left[\Rees_{R_\qqq}(J_\qqq)\right]_{(m,n)}\right)}{n^{r-i}/(r-i)!} = \lim_{n \to \infty} \frac{\dim_{\kappa(\qqq)}\left(\left[J_\qqq^n\right]_{m+n\delta}\right)}{n^{r-i}/(r-i)!}.
		$$
		So, to show that $d_i^\FF(\pp) \le d_i^\FF(\qqq)$, it suffices to verify that $\dim_{\kappa(\pp)}\left(\left[J_\pp^n\right]_{\nu}\right) \le \dim_{\kappa(\qqq)}\left(\left[J_\qqq^n\right]_{\nu}\right)$ for all $n \ge 0$ and $\nu \in \ZZ$.
		Notice that the Hilbert functions of $R_\pp$ and $R_\qqq$ are equal.
		Then, from Nakayama's lemma we get 
		\begin{align*}
			\dim_{\kappa(\pp)}\left(\left[R_\pp/J_\pp^n\right]_{\nu}\right) &= \mu_{A_\pp}\left( \left[ \frac{S}{\left(z_1,\ldots,z_i,I^n\right)S} \otimes_A A_\pp \right]_\nu  \right)  \\
			&\ge \mu_{A_\qqq}\left( \left[ \frac{S}{\left(z_1,\ldots,z_i,I^n\right)S} \otimes_A A_\qqq \right]_\nu  \right) = \dim_{\kappa(\qqq)}\left(\left[R_\qqq/J_\qqq^n\right]_{\nu}\right)
		\end{align*}
		for all $n \ge 0$ and $\nu \in \ZZ$.
		Finally, this implies that $d_i^\FF(\pp) \le d_i^\FF(\qqq)$, and so it follows that condition (ii) of \autoref{rem_top_Nagata} holds for $E_{i,h}$.
		So, we are done with the proof of the theorem.  
	\end{proof}

	We single out an important corollary of \autoref{thm_specialization_rat_map}.
	When $r = s$, and we consider a rational map of the form $\FF : \PP_A^r \dashrightarrow \PP_A^r$, we have some control over the degree of the specialized rational maps $\FF(\pp) : \PP_{\kappa(\pp)}^r \dashrightarrow \PP_{\kappa(\pp)}^r$.
	
	\begin{corollary}
		\label{cor_spec_deg_map}
		Assume \autoref{setup_specialization_rat_maps}.
		Let $\FF : \PP_A^r \dashrightarrow \PP_A^r$ be a rational map.
		Then the function
		$$
		\deg^\FF : \Spec(A) \rightarrow \ZZ, \qquad \pp \mapsto \deg\left(\FF(\pp)\right)
		$$
		is lower semicontinuous.
	\end{corollary}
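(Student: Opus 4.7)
The plan is to reduce the corollary directly to \autoref{thm_specialization_rat_map}(ii) with $i = 0$, by observing that when the source and target projective spaces have equal dimension, the degree of the rational map coincides with its zeroth projective degree. So I would first verify the pointwise identity $\deg^\FF(\pp) = d_0^\FF(\pp)$ for every $\pp \in \Spec(A)$, and then invoke the lower semicontinuity of $d_0^\FF$ already established in \autoref{thm_specialization_rat_map}(ii).

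For the pointwise identity, the main tool is the classical formula \autoref{eq_deg_formula} applied to the specialized map $\FF(\pp) : \PP_{\kappa(\pp)}^r \dashrightarrow \PP_{\kappa(\pp)}^r$, namely
$$
d_0\bigl(\FF(\pp)\bigr) \;=\; \deg\bigl(\FF(\pp)\bigr) \cdot \deg_{\PP_{\kappa(\pp)}^r}\bigl(Y(\pp)\bigr).
$$
If $\FF(\pp)$ is generically finite, then $Y(\pp)$ is an irreducible closed subvariety of $\PP_{\kappa(\pp)}^r$ of dimension $r$ and must therefore coincide with $\PP_{\kappa(\pp)}^r$, so $\deg_{\PP_{\kappa(\pp)}^r}(Y(\pp)) = 1$ and the formula yields $d_0^\FF(\pp) = \deg^\FF(\pp)$. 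If $\FF(\pp)$ is not generically finite, then the convention $\deg(\FF(\pp)) = 0$ combined with the same formula forces $d_0^\FF(\pp) = 0 = \deg^\FF(\pp)$.

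Thus $\deg^\FF \equiv d_0^\FF$ as functions on $\Spec(A)$, and the lower semicontinuity claim is immediate from \autoref{thm_specialization_rat_map}(ii). I do not anticipate any real obstacle: the entire argument is an observation about how both sides of \autoref{eq_deg_formula} degenerate when $s = r$, with the conventions on $\deg(\Psi)$ absorbing the nongenerically-finite case cleanly. The substantive content is entirely contained in \autoref{thm_specialization_rat_map}, which was proved via the topological Nagata criterion together with generic freeness.
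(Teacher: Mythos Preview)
Your proposal is correct and follows essentially the same approach as the paper: both establish the pointwise identity $\deg^\FF(\pp) = d_0^\FF(\pp)$ by splitting into the cases $\dim(Y(\pp)) = r$ (where \autoref{eq_deg_formula} applies with $\deg_{\PP_{\kappa(\pp)}^r}(Y(\pp)) = 1$) and $\dim(Y(\pp)) < r$ (where both sides vanish), and then invoke \autoref{thm_specialization_rat_map}.
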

	\begin{proof}
		We have that $\deg^\FF(\pp) = d_0^\FF(\pp)$ for all $\pp \in \Spec(A)$.
		Indeed, if $\dim(Y(\pp)) < r$, both $\deg^\FF(\pp)$ and $d_0^\FF(\pp)$ are equal to zero; and if $\dim(Y(\pp)) = r$, $\deg_{\PP_{\kappa(\pp)}^r}(Y(\pp)) =1$ and so $d_0(\FF(\pp)) = \deg(\FF(\pp))$ according to \autoref{eq_deg_formula}.
		Thus, the result follows from \autoref{thm_specialization_rat_map}. 
	\end{proof}

	We now apply the above results to different families of rational maps. 
	We obtain generalizations of \cite[Theorems 6.3, 6.8]{SPECIALIZATION_ARON}, and we eliminate the conditions assumed there.
	The following corollary yields significant upper bounds for the projective degrees of certain families of rational maps. 
	It should be mentioned that these inequalities are sharp for the general members of the considered families (see \cite[Theorems 5.7, 5.8]{MIXED_MULT}).
	
	\begin{corollary}
		\label{cor_upper_bounds_proj_degs}
		Let $\kk$ be a field, $R = \kk[x_0,\ldots,x_r]$ be a standard graded polynomial ring, $\PP_\kk^r = \Proj(R)$, $\Psi: \PP_\kk^r \dashrightarrow \PP_\kk^s$ be a rational map with representative $\mathbf{g} = (g_0:\cdots:g_s)$ and base ideal $J = (g_0,\ldots,g_s) \subset R$, and suppose that $\delta=\deg(g_j) >0$.
		Then the following statements hold: 
		\begin{enumerate}[\rm (i)]
			\item $d_i(\Psi) \le \delta^{r-i}$ for all $0 \le i \le r$.

			\item Suppose that $J$ is a perfect ideal of height two with Hilbert-Burch resolution of the form
			$$
			0 \rightarrow \bigoplus_{i=1}^sR(-\delta-\mu_i) \rightarrow {R(-\delta)}^{s+1} \rightarrow  J \rightarrow 0.			
			$$
			Then, for all $0 \le i \le r$, we have 
			$$
			d_i(\Psi) \le  
			e_{r-i}(\mu_1,\ldots,\mu_s)
			$$
			where $	e_{r-i}(\mu_1,\ldots,\mu_s)$ denotes the elementary symmetric polynomial
			$$
			e_{r-i}(\mu_1,\ldots,\mu_s)= 
			\sum_{1\le j_1  < \cdots < j_{r-i} \le s} \mu_{j_1}\cdots\mu_{j_{r-i}}.
			$$
			In particular, if $r = s$, then $\deg(\Psi) \le \mu_1\cdots \mu_r$.

			\item Suppose that $J$ is a Gorenstein ideal of height three.
			Let $D \ge 1$ be the degree of every nonzero entry of an alternating minimal presentation matrix of $J$.
			Then, for all $0 \le i \le r$, we have 
			$$
			d_i(\Psi) \le \begin{cases}
				D^{r-i} \sum_{k=0}^{\lfloor\frac{s-r+i}{2}\rfloor}\binom{s-1-2k}{r-i-1} & \text{ if } 0 \le i \le r-3 \\
				\delta^{r-i} & \text{ if } r-2 \le i \le r.
			\end{cases}
			$$
			In particular, if $r=s$, then $\deg(\Psi) \le D^r$.
		\end{enumerate}
	\end{corollary}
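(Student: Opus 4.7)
The plan is to realize $\Psi$ as a specialization, from a closed point $\pp_0$ of a suitable parameter domain $A$, of a rational map $\FF : \PP_A^r \dashrightarrow \PP_A^s$ whose restriction to the generic point $(0) \in \Spec(A)$ is the generic member of the corresponding family, and then to invoke \autoref{thm_specialization_rat_map}(ii). Since lower semicontinuity of $d_i^\FF$ means exactly that $d_i^\FF$ cannot increase under specialization, once such an $\FF$ is constructed with $\FF(\pp_0) = \Psi$ and with generic-point value $d_i^\FF((0))$ equal to the claimed bound $N_i$, the inequality $d_i(\Psi) = d_i^\FF(\pp_0) \le d_i^\FF((0)) = N_i$ is immediate. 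Thus the real work is to build the right family in each case and to compute the projective degrees at the generic point using the formulas of \cite[Theorems 5.7, 5.8]{MIXED_MULT}.

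For part (i) I would take $A$ to be the polynomial ring over $\kk$ in the $(s{+}1)\binom{r+\delta}{r}$ coefficients of $s+1$ universal forms $\tilde g_0,\ldots,\tilde g_s$ of degree $\delta$ in $x_0,\ldots,x_r$, and let $\FF$ be the tautological rational map. The specific coefficients of $g_0,\ldots,g_s$ determine a closed point $\pp_0 \in \Spec(A)$ with $\FF(\pp_0)=\Psi$. At the generic point $\FF((0))$ is defined by $s+1$ generic forms of degree $\delta$; since the setup forces $r < s$, these forms have no common zero in $\PP_{K(A)}^r$, so $\FF((0))$ is a morphism and its $i$-th projective degree equals the intersection number $H^{i} \cdot (\delta H)^{r-i} = \delta^{r-i}$. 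Applying \autoref{thm_specialization_rat_map}(ii) yields $d_i(\Psi) \le \delta^{r-i}$.

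For parts (ii) and (iii) I would mimic this construction while preserving the prescribed resolution shape. In (ii), let $A$ be the polynomial ring over $\kk$ in the coefficients of the entries of an $(s{+}1)\times s$ matrix $\tilde\varphi$ whose $j$-th column has entries of degree $\mu_j$, let $\tilde J \subset A[x_0,\ldots,x_r]$ be the ideal of maximal minors of $\tilde\varphi$, and let $\FF$ be the associated rational map; the Hilbert-Burch matrix $\varphi$ of $J$ determines a closed point $\pp_0$ with $\FF(\pp_0)=\Psi$. By the Hilbert-Burch theorem, $\tilde J \otimes_A K(A)$ is perfect of height two with precisely the displayed resolution, and by \cite[Theorem 5.7]{MIXED_MULT} the $i$-th projective degree of $\FF((0))$ equals $e_{r-i}(\mu_1,\ldots,\mu_s)$, which gives the claimed bound. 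Part (iii) is handled identically with $A$ parametrizing alternating $(s{+}1)\times(s{+}1)$ matrices with entries of degree $D$, using the Buchsbaum-Eisenbud structure theorem to recover a generic Gorenstein ideal of height three at the generic point and \cite[Theorem 5.8]{MIXED_MULT} to compute the generic projective degrees as the stated sum; the range $r{-}2 \le i \le r$ reduces to part (i).

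The main technical obstacle I anticipate is verifying, in (ii) and (iii), that the projective-degree formulas of \cite[Theorems 5.7, 5.8]{MIXED_MULT} really do apply to $\FF((0))$ for the parameter spaces $A$ chosen above. Concretely, this requires that on a Zariski-open neighbourhood of the generic point of $\Spec(A)$ the prescribed Hilbert-Burch (respectively, alternating) shape is preserved and the Hilbert function of $\Rees(\tilde J)$ is stable, so that \autoref{thm_specialization_rat_map}(ii) is being applied in a regime where the generic value matches the cited formulas. This is precisely the generic-perfect / generic-Gorenstein setting in which those formulas were derived, so with the right choice of $A$ the ingredient is available off the shelf.
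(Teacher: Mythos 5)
Your overall strategy matches the paper's exactly: construct a parameter ring $A = \kk[\mathbf{z}]$ carrying a tautological family $\FF$, identify $\Psi$ with a closed-point specialization $\FF(\mm_\alpha)$, compute the projective degrees at the generic point $\xi = (0)$, and invoke the lower semicontinuity of $d_i^\FF$ from \autoref{thm_specialization_rat_map}(ii). Part (i) is essentially complete as written (the paper cites \cite[Observation 3.2]{KPU_blowup_fibers} for the generic projective degrees, but your direct geometric computation is equivalent).

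For parts (ii) and (iii), however, the obstacle you flag at the end is not dispensable: it is the substantive content of the proof. Invoking Hilbert--Burch (respectively Buchsbaum--Eisenbud) only gives the claimed perfect, height-two (respectively Gorenstein, height-three) structure over $K(A)$ once one separately checks that the generic determinantal (respectively Pfaffian) ideal attains the expected height, and, more importantly, the projective-degree formulas of \cite[Theorems 5.7, 5.8]{MIXED_MULT} require the generic base ideal to satisfy the $G_{r+1}$ condition on local numbers of generators. Your proposal asserts that these are ``available off the shelf'' but gives no argument or reference; the paper supplies exactly this missing step by citing \cite[Lemma 6.2]{SPECIALIZATION_ARON} for the generic Hilbert--Burch matrix and \cite[Lemma 2.12]{MULT_SAT_FIB_GOR_3} for the generic alternating matrix, which establish $G_{r+1}$ (and the correct height) for the generic member. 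Until that is verified for your chosen parameter ring $A$, the evaluation of $d_i(\FF(\xi))$ and hence the final inequality is unjustified, so there is a genuine gap at precisely the point you called ``the main technical obstacle.''
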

	\begin{proof}
		(i) For $0 \le j \le s$, consider a set of variables $\mathbf{z}_j = \{ z_{j,1}, \ldots, z_{j,m} \}$ over $\kk$ with $m = \binom{\delta+r}{r}$, and set $\mathbf{z} = \mathbf{z}_0 \cup \cdots \cup \mathbf{z}_s$.
		Let $A = \kk[\mathbf{z}]$, $S = A[x_0,\ldots,x_r]$ and consider the generic polynomials 
		$$
		G_j \,:=\, z_{j,1} x_0^\delta + z_{j,2} x_0^{\delta-1}x_1 + \cdots + z_{j,m} x_r^\delta \in S. 
		$$
		Let $\FF : \PP_A^r \dashrightarrow \PP_A^s$ be a rational map with base ideal $I = (G_0,\ldots,G_s) \subset S$.
		Let $\xi = (0) \subset \Spec(A)$ be the generic point and $\mm_\alpha = \left(\{z_{j,k} - \alpha_{j,k}\}_{j,k}\right)\in \Spec(A)$ be a rational maximal ideal such that $J = I(\mm_\alpha) \subset R$.
		It is known that the projective degrees of the morphism $\FF(\xi) : \PP_{\kk(\mathbf{z})}^r\rightarrow \PP_{\kk(\mathbf{z})}^s$ (it is base point free as $I \otimes_A \kk(\mathbf{z}) \subset S(\xi) = \kk(\mathbf{z})[x_0,\ldots,x_r]$ is a zero-dimensional ideal) are equal to $d_i\left(\FF(\xi)\right) = \delta^{r-i}$ (see, e.g., \cite[Observation 3.2]{KPU_blowup_fibers}).
		By using \autoref{thm_specialization_rat_map}, we obtain $d_i(\Psi) = d_i^\FF(\mm_\alpha) \le d_i^\FF(\xi) = \delta^{r-i}$ for all $0 \le i \le r$.
		
		(ii) 	For $1 \le j \le s+1$ and $1 \le k \le s$,  let 
		$
		\mathbf{z}_{j,k}=\{z_{j,k,1}, z_{j,k,2}, \ldots, z_{j,k,m_k}\}
		$	
		denote a set of variables over $\kk$ of cardinality $m_k=\binom{\mu_k+r}{r}$, and set $\mathbf{z} = \bigcup_{j,k} \mathbf{z}_{j,k}$.
		Let $A = \kk[\mathbf{z}]$, $S = A[x_0,\ldots,x_r]$ and consider the generic $(s+1)\times s$ Hilbert-Burch matrix 
		$$
		\mathcal{M} = \left( \begin{array}{cccc}
			p_{1,1} & p_{1,2} & \cdots & p_{1,s} \\
			p_{2,1} & p_{2,2} & \cdots & p_{2,s}\\
			\vdots & \vdots & & \vdots\\
			p_{s+1,1} & p_{s+1,2} & \cdots & p_{s+1,s}\\
		\end{array}
		\right)	
		$$	
		where each polynomial $p_{j,k} \in S$ is given by 
		$$
		p_{j,k} = z_{j,k,1} x_0^{\mu_k} + z_{j,k,2} x_0^{\mu_k-1}x_1 + \cdots + z_{j,k,m_k}x_r^{\mu_k}.
		$$
		Let $\FF : \PP_A^r \dashrightarrow \PP_A^s$ be a rational map with base ideal $I = I_s(\mathcal{M}) \subset S$.
		Let $\varphi \in R^{(s+1) \times s}$ be the Hilbert-Burch presentation of $J$.
		Let $\xi = (0) \subset \Spec(A)$ be the generic point and $\mm_\alpha = \left(\{z_{j,k,l} - \alpha_{j,k,l}\}_{j,k,l}\right)\in \Spec(A)$ be a rational maximal ideal such that $\varphi \in R^{(s+1)\times s}$ is obtained by specializing $\mathcal{M} \in S^{(s+1) \times s}$ via the map $S \twoheadrightarrow S/\mm_\alpha \cong R$.
		From \cite[Lemma 6.2]{SPECIALIZATION_ARON} the generic ideal $I \otimes_A \kk(\mathbf{z}) \subset S(\xi) = \kk(\mathbf{z})[x_0,\ldots,x_r]$ is perfect of height two and satisfies the condition $G_{r+1}$, and so \cite[Theorem 5.7]{MIXED_MULT} implies that the projective degrees of $\FF(\xi) : \PP_{\kk(\mathbf{z})}^r \dashrightarrow \PP_{\kk(\mathbf{z})}^s$ are equal to $d_i(\FF(\xi)) = e_{r-i}(\mu_1,\ldots,\mu_s)$.
		Finally, \autoref{thm_specialization_rat_map} yields that $d_i(\Psi) = d_i^\FF(\mm_\alpha) \le d_i^\FF(\xi) = e_{r-i}(\mu_1,\ldots,\mu_s)$ for all $0 \le i \le r$.
		
		(iii) This part follows verbatim as part (ii), except that we 
		need to use \cite[Lemma 2.12]{MULT_SAT_FIB_GOR_3} and \cite[Theorem 5.8]{MIXED_MULT}.
	\end{proof}
	
	From \autoref{thm_specialization_rat_map}, we have that the projective degrees and the degree of the image of a rational map $\FF : \PP_A^r \dashrightarrow \PP_A^s$ behave as lower semicontinuous functions under specialization; accordingly, these invariants cannot increase under specialization.
	However, it turns out that the degree of a rational map is a much more erratic invariant, and it seems that \autoref{cor_spec_deg_map} is the most general result one can hope for. Indeed, the following two examples show that the degree of a rational map can either increase or decrease under specialization. 
	
	\begin{example}[The degree of a rational map can increase]
		Let $\QQ$ be the field of rational numbers, and $A = \QQ[a]$ and $S = A[x_0,x_1,x_2]$ be polynomial rings.
		Consider the following  matrix 
		$$
		\mathcal{M} = \begin{pmatrix}
			{x}_{0}&{x}_{1}&{x}_{0}^{2}\\
			{x}_{1}&{x}_{0}&{x}_{1}^{2}\\
			{x}_{0}+a\,{x}_{2}&{x}_{1}&{x}_{2}^{2}\\
			0&{x}_{0}&0\end{pmatrix},
		$$ 
		and let $\FF : \PP_A^2 \dashrightarrow \PP_A^3$ be a rational map with base ideal $I = (f_0,f_1,f_2,f_3) \subset S$ given by
		$$
		I = I_3(\mathcal{M}) = {\small\begin{pmatrix}
				{x}_{0}^{2}{x}_{1}^{2}+a\,{x}_{0}{x}_{1}^{2}{x}_{2}-{x}_{0}{x}_{1}{x}_{2}^{2},\\
				-{x}_{0}^{4}-a\,{x}_{0}^{3}{x}_{2}+{x}_{0}^{2}{x}_{2}^{2},\\
				{x}_{0}^{3}{x}_{1}-{x}_{0}^{2}{x}_{1}^{2},\\
				{x}_{0}^{4}-{x}_{0
				}^{2}{x}_{1}^{2}+a\,{x}_{0}^{3}{x}_{2}-a\,{x}_{1}^{3}{x}_{2}-{x}_{0}^{2}{x}_{2}^{2}+{x}_{1}^{2}{x}_{2}^{2}\end{pmatrix}}.
		$$
		Let $\sT = S \otimes_A A[y_0,y_1,y_2,y_3]$ be a standard bigraded polynomial ring over $A$ and write the Rees algebra $\Rees(I)$ as $\Rees(I) \cong \sT/\mathcal{J}$.
		The ideal $\mathcal{J} \subset \sT$ is equal to 
		$$
		\small
		\begin{pmatrix}
			\mathtt{{x}_{1}{y}_{0}+{x}_{0}{y}_{1}+{x}_{1}{y}_{2}+{x}_{0}{y}_{3}},\\
			\mathtt{{x}_{0}{y}_{0}+{x}_{1}{y}_{1}+\left({x}_{0}+a\,{x}_{2}\right){y}_{2}},\\
			\left({x}_{0}{x}_{1}-{x}_{1}^{2}\right){y}_{1}+\left({x}_{0}^{2}+a
			\,{x}_{0}{x}_{2}-{x}_{2}^{2}\right){y}_{2},\\
			\mathtt{{x}_{2}{y}_{0}^{2}+\left(-a\,{x}_{0}+a\,{x}_{1}-{x}_{2}\right){y}_{1}^{2}+\left(a^{2}+2\right){x}_{2}{y}_{0}{y}_{2}-a\,{x}_{1}{y}_{1}{y}_{2}+{x}_{2}{y
				}_{2}^{2}+\left(-a\,{x}_{0}+a\,{x}_{1}-{x}_{2}\right){y}_{1}{y}_{3}},\\
			{y}_{0}^{4}-2\,{y}_{0}^{2}{y}_{1}^{2}+{y}_{1}^{4}+\left(a^{2}+4\right){y}_{0}^{3}{y}_{2}-4\,{y}_{0}{y}_{1}^{2}{y}_{2}+a^{2}{y
			}_{1}^{3}{y}_{2}+\left(2\,a^{2}+6\right){y}_{0}^{2}{y}_{2}^{2}-2\,{y}_{1}^{2}{y}_{2}^{2}+\\\left(a^{2}+4\right){y}_{0}{y}_{2}^{3}+{y}_{2}^{4}-2\,{y}_{0}^{2}{y}_{1}{y}_{3}+2\,{y}_{1}^{3}{y}_{3}-4\,{
				y}_{0}{y}_{1}{y}_{2}{y}_{3}+2\,a^{2}{y}_{1}^{2}{y}_{2}{y}_{3}-2\,{y}_{1}{y}_{2}^{2}{y}_{3}+{y}_{1}^{2}{y}_{3}^{2}+a^{2}{y}_{1}{y}_{2}{y}_{3}^{2}\end{pmatrix}.
		$$
		This can be computed in a computer algebra system like \texttt{Macaulay2} \cite{MACAULAY2}.
		The ideal $\mathcal{J} \subset \sT$ is generated by $5$ bihomogeneous polynomials in $\sT$. 
		The first, second and fourth generators of $\mathcal{J}$ are linear in the variables $x_i$.
		Let $\LL = \QQ(a) = \Quot(A)$ and $\mathbb{G} : \PP_\LL^2 \dashrightarrow \PP_\LL^3$ be the generic rational map with base ideal $I \otimes_A \LL \subset \LL[x_0,x_1,x_2]$.
		From \cite[Theorem 2.18]{AB_INITIO}, we can check that $\mathbb{G}$ is birational, i.e., $\deg(\mathbb{G})=1$.
		Alternatively, we give a short direct argument. 
		We may assume that $\LL$ is algebraically closed.
		By considering the generator $f_2  = x_0^3x_1-x_0^2x_1^2 = x_0^2x_1(x_0-x_1)$ of $I$, we obtain the morphism 
		$
		h : D(f_2) \rightarrow \PP_\LL^3
		$.
		Notice that $D(f_2)$ lies inside the affine patch  $\mathbb{A}_\LL^2 \subset \PP_\LL^2$ with $x_0 = 1$.
		For any point $p = (p_0: p_1:p_2:p_3) \in \PP_\LL^3$ in the image of $h$,  if $(1,\alpha_1,\alpha_2) \in h^{-1}(p)$, then we get the following linear system
		$$
		\begin{array}{cccc}
			p_1 \alpha_1 & + & p_2a\alpha_2 &= -(p_0+p_2)\\
			(p_0+p_2) \alpha_1 && &= -(p_1+p_3) 
		\end{array}
		$$
		that is derived from the two linear syzygies in $\mathcal{M}$; this system has a unique solution $(1,\alpha_1,\alpha_2)$ if the determinant $-ap_2(p_0+p_2)$ is non-zero.
		It follows that for any $\alpha = (1, \alpha_1, \alpha_2) \in D(f_2)\cap D(af_2(f_0+f_2)) = D(f_2(f_0+f_2))$, the fiber $h^{-1}(h(\alpha))$ has one element.
		Thus $\mathbb{G}$ is birational.

		On the other hand, we make the specialization $a = 0$, which gives the matrix 
		$$
		M = \begin{pmatrix}
			{x}_{0}&{x}_{1}&{x}_{0}^{2}\\
			{x}_{1}&{x}_{0}&{x}_{1}^{2}\\
			{x}_{0}&{x}_{1}&{x}_{2}^{2}\\
			0&{x}_{0}&0\end{pmatrix}.
		$$ 
		Let $\mathbb{g} : \PP_\kk^2 \dashrightarrow \PP_\kk^3$ be a rational map with base ideal 
		$$
		J = I_3(M) = \left(-{x}_{0}^{4}+{x}_{0}^{2}{x}_{1}^{2}+{x}_{0}^{2}{x}_{2}^{2}-{x}_{1}^{2}{x}_{2}^{2},\, {x}_{0}^{3}{x}_{1}-{x}_{0}^{2}{x}_{1}^{2},\, {x}_{0}^{4}-{x}_{0}^{2}{x}_{2}^{2},\, {x}_{0}^{2}{x
		}_{1}^{2}-{x}_{0}{x}_{1}{x}_{2}^{2}\right).
		$$
		In this case $\mathbb{g}$ is not birational, indeed $\deg(\mathbb{g}) = 2$.
		
		Therefore, under the above specialization, we obtain $\deg(\mathbb{g}) = 2 > 1 = \deg(\mathbb{G})$.
	\end{example}

	\begin{example}[{The degree of a rational map usually decreases}]
		We recall an example \cite[Example 6.5]{SPECIALIZATION_ARON} where the degree of a rational map can decrease arbitrarily under specialization.
		Let $m \ge 1$ be an integer.
		Let $\kk$ be a field, and $A=\kk[a]$ and $S=A[x_0,x_1,x_2]$ be polynomial rings. Consider the matrix 
		$$
		\mathcal{M} = 
		\begin{pmatrix}
			x & zy^{m-1}\\
			-y & zx^{m-1} + y^m\\
			az & zx^{m-1}
		\end{pmatrix}
		$$
		with entries in $S$.
		Let $\FF : \PP_A^2 \dashrightarrow \PP_A^2$ be a rational map with base ideal $I = I_2(\mathcal{M}) \subset S$.
		Let $\LL = \kk(a) = \Quot(A)$ and $\mathbb{G} : \PP_\LL^2 \dashrightarrow \PP_\LL^2$ be the generic rational map with base ideal $I \otimes_A \LL \subset \LL[x_0,x_1,x_2]$.
		For any $\beta \in \kk$, let $\nnn_\beta =(a-\beta) \in A$ and $\mathbb{g}_\beta : \PP_\kk^2 \dashrightarrow \PP_\kk^2$ be a rational map with base ideal $I(\nnn_\beta) \subset \kk[x_0,x_1,x_2]$.
		Then, we have that $\deg(\mathbb{G}) = m$ and 
		$$
		\deg(\mathbb{g}_\beta) = \begin{cases}
			1 & \text{ if } \beta = 0\\
			m & \text{ if } \beta \neq 0.
		\end{cases}
		$$
		So, the specialization $a = 0$ gives an arbitrary decrease in degree $\deg(\mathbb{g}_0) = 1 < m = \deg(\mathbb{G})$.
	\end{example}

	\section{Polar multiplicities, Segre numbers and new set of invariants}
	\label{sect_invariants_cycles}
	
	In this section, we prove several results regarding polar multiplicities and Segre numbers of an ideal, and we introduce a new related invariant.
	These invariants are defined as a special case of the general notion of polar multiplicities due to Kleiman and Thorup \cite{KLEIMAN_THORUP_GEOM, KLEIMAN_THORUP_MIXED}.
	Here, an important goal for us is to extend several of the results of Gaffney and Gassler \cite{GG} from their analytic setting to an algebraic one over a Noetherian local ring.
	The following setup is used throughout this section.
	
	\begin{setup}
		\label{setup_polar_Segre}
		Let $(R, \mm, \kappa)$ be a Noetherian local ring with maximal ideal $\mm$ and residue field $\kappa$.
		Let  $d := \dim(R)$ and $X := \Spec(R)$.
		Let $I \subset R$ be a proper  ideal  generated by elements $f_1,\ldots,f_m \in  R$.
		We consider the Rees algebra $B := \Rees(I) := R[IT] = \bigoplus_{v \ge 0} I^vT^v \subset R[T]$ of the ideal $I$. 
		We have a natural homogeneous presentation $W := R[y_1,\ldots,y_m] \surjects \Rees(I)$, $y_i \mapsto f_iT$, where $W$ is a standard graded polynomial ring over $R$.
		Let $P := {\rm Bl}_I(X) = \Proj(B) \subset \Proj(W) = \PP_R^{m-1}$ be the blowup of $X$ along $I$ and consider the natural projection 
		$$
		\pi : P \subset \PP_R^{m-1} \rightarrow X.
		$$
		Let $E := \pi^{-1}(V(I)) \cong \Proj(G) \subset P$ be the exceptional divisor and $G:=\gr_I(R) := \bigoplus_{v \ge 0} I^v/I^{v+1}$ be the corresponding associated graded ring.  
		The blowup $P$ has a natural  affine open cover $P = \bigcup_{i=1}^m U_i$ where $U_i = \Spec\left(\big[B_{y_i}\big]_0\right)$. 
		More precisely, we can write
		$$
		U_i= \Spec\left(R[I/f_i]\right),
		$$
		where $R[I/f_i]$ denotes the $R$-subalgebra of $R_{f_i}$ generated by all $f/f_i$ with $f \in I$.
		Notice that the local equation of $E$ on $U_i$ is given by $f_i \in R[I/f_i]$.
	\end{setup}
	
	We now briefly recall the general notion of polar multiplicities due to Kleiman and Thorup \cite{KLEIMAN_THORUP_GEOM, KLEIMAN_THORUP_MIXED}.
	Here we shall freely use the results from the references \cite{KLEIMAN_THORUP_GEOM, KLEIMAN_THORUP_MIXED, cidruiz2024polar} regarding polar multiplicities.
	Let $M$ be a finitely generated graded $B$-module and $\FF = \widetilde{M}$ the corresponding coherent $\OO_P$-module.
	The function $(v, n) \mapsto \length_R\left(M_v/\mm^{n+1}M_v\right)$ eventually coincides with a bivariate polynomial $P_M(v, n)$ of degree equal to $\dim(\Supp(\FF))$.
	Then, for all $r \ge \dim(\Supp(\FF))$, we can write 
	$$
	P_M(v, n) \;=\;  \sum_{i=0}^r\; \frac{m_r^i(M)}{i!\,(r-i)!}\;v^{r-i}n^i \;+\; \text{(lower degree terms)}.
	$$
	We say that the invariants $m_r^{i}(M)$ are the \emph{polar multiplicities} of $M$.
	Recall that $\dim(P) \le d$ and $\dim(E) \le d-1$.
	Our main interest is on the following invariants:

	\begin{definition}
		\begin{enumerate}[\rm (i)]
			\item For all $0 \le i \le d$, we say that $m_i(I, R) := m_{d}^{d-i}(B)$ is the \emph{$i$-th polar multiplicity} of the ideal $I \subset R$.
			\item For all $1 \le i \le d$, we say that $c_i(I, R) := m_{d-1}^{d-i}(G)$ is the \emph{$i$-th Segre number} of the ideal $I \subset R$.		
			\item For all $1 \le i \le d$, we say that $\nu_i(I, R) := m_i(I, R) + c_i(I, R)$ is the \emph{$i$-th polar-Segre multiplicity} of the ideal $I \subset R$.
			By convention, we also set $\nu_0(I, R) =m_0(I, R)$.
		\end{enumerate}
	\end{definition}
	
	By \cite[Proposition 2.10]{cidruiz2024polar}, we get $m_d(I, R) = m_{d}^0(B) = j_{d+1}(B)$, and since $\dim(B/\mm B) \le d$, it follows that $m_d(I, R) = j_{d+1}(B) = 0$ (see \cite[\S 6.1]{FLENNER_O_CARROLL_VOGEL}).
	For all $0 \le i \le d-1$, the polar multiplicity $m_i(I, R)$ is also referred to as the \emph{mixed multiplicity} $e_i(\mm\mid I)$ (see, e.g., \cite{Trung2001}).

	We shall need some very basic rudiments from intersection theory. 
	Since we are working over our Noetherian local ring $R$ (and not over a field), the usual developments from Fulton's book \cite{FULTON_INTERSECTION_THEORY} do not suffice. 
	In terms of a suitable \emph{dimension function}, we could use available extensions of intersection theory (see, e.g., \cite[Chapter 20]{FULTON_INTERSECTION_THEORY}, \cite[\href{https://stacks.math.columbia.edu/tag/02P3}{Chapter 02P3}]{stacks-project}, \cite{THORUP}).
	However, as we shall not require a notion of rational equivalence, we present our results in terms of cycles and quickly develop the necessary concepts. 
	
	\begin{notation}
		\begin{enumerate}[\rm (i)]
			\item 	Let $Y$ be a Noetherian scheme. 
			We denote by $Z_k(Y)$ the free group of $k$-dimensional cycles. 
			For a coherent sheaf $\FF$ on $Y$ and an integer $k \ge \dim(\Supp(\FF))$, we denote by $\big[\FF\big]_k \in Z_k(Y)$ the associated $k$-cycle. 
			For a closed subscheme $Z \subset Y$ and an integer $k \ge \dim(Z)$, we denote by $\big[Z\big]_k = \big[\OO_{Z}\big]_k \in Z_k(Y)$ the associated $k$-cycle.	
			\item Given $k$-cycle $\xi = \sum_i l_i[R/\pp_i] \in Z_k(X)$, its multiplicity is given by $e_{k}(\xi) := \sum_i l_ie_k(R/\pp_i)$.
		\end{enumerate}
	\end{notation}
	
	Below we give a short self-contained result regarding the push-forward of cycles along a projection (cf., \cite[\href{https://stacks.math.columbia.edu/tag/02R6}{Lemma 02R6}]{stacks-project}, \cite[Proposition 4.3]{THORUP}).
	
	\begin{defprop}
		\label{defprop_push}
		Let $S$ be a standard graded $R$-algebra and consider the projective morphism $\eta: Y = \Proj(S) \rightarrow X = \Spec(R)$.
		Given an integral closed subscheme $Z \subset Y$, the push-forward is defined as $$
		\eta_*\left([Z]\right) \;:=\; \begin{cases}
			\big[K(Z) : K(Z')\big] \cdot [Z'] & \quad \text{ if } \dim(Z) = \dim(Z') \\
			0 & \quad \text{ otherwise},
		\end{cases}
		$$
		where $Z'=\eta(Z)$, and $K(Z)$ and $K(Z')$ denote the function fields of $Z$ and $Z'$, respectively.
		The push-forward map $\eta_* : Z_k(Y) \rightarrow Z_k(X)$ is then determined by linearity.
		Let $\FF$ be a coherent $\OO_Y$-module and $k \ge \dim(\Supp(\FF))$.
		Then 
		$
		\eta_*\left(\big[\FF\big]_k\right) = \big[\eta_*(\FF)\big]_k \in Z_k(X).
		$
	\end{defprop}
	\begin{proof}
		Let $Z = \Proj(S/\fP)$ where $\fP \subset S$ is a relevant prime ideal, and set $Z' = \eta(Z) = \Spec(R/\pp)$ where $\pp = \fP \cap R$.
		From \cite[Lemma 1.2.2]{vasconcelos1994arithmetic}, we get $\dim(S/\fP) = \dim(R/\pp) + \trdeg_{R/\pp}(S/\fP)$.
		Since we have $\trdeg_{R/\pp}(S/\fP) \ge 1$, it follows that $\dim(Z) \ge \dim(Z')$.
		Let $M$ be a finitely generated graded $S$-module with $\FF \cong \widetilde{M}$ and $\HH_{S_+}^0(M)=0$.
		Notice that we may substitute $S$ by $S/\Ann_S(M)$ and $R$ by $R/(R\cap \Ann_S(M))$.
		Therefore we assume that $k \ge \dim(Y)$ and $k\ge\dim(R)$, and that any minimal prime of $S$ is relevant.
		
		Let $\pp \subset R$ be a minimal prime of $N=\HH^0(X, \eta_*(\FF))$ of dimension $k$.
		Notice that any relevant prime of $S$ contracting to $\pp$ should be minimal. 
		Thus the fiber $\eta^{-1}(\pp)$ is finite, and so we can find an affine open neighborhood $V \subset X$ of $\pp$ such that $\eta^{-1}(V) \rightarrow V$ is finite (see \cite[\href{https://stacks.math.columbia.edu/tag/02NW}{Lemma 02NW}]{stacks-project}, \cite[Exercise II.3.7]{HARTSHORNE}).
		We set $\eta^{-1}(V) = \Spec(A)$ and choose a finitely generated $A$-module $L$ such that $\widetilde{L} \cong {\FF\mid}_{\eta^{-1}(V)}$.
		Since $R_\pp$ is an Artinian local ring and $A_\pp = A \otimes_R R_\pp$ is module-finite over $R_\pp$, it follows that $A_\pp$ is Artinian.
		We have the equality 
		$$
		\sum_{\qqq}[\kappa(\qqq):\kappa(\pp)]\cdot \length_{A_\qqq}(L_\qqq)\;=\; \length_{R_\pp}(N_\pp),
		$$ 
		where the sum runs through the minimal primes of $A$ contracting to $\pp$.
		Finally, notice that the right-hand side of the above equality is the coefficient in $[\eta_*(\FF)]_k$ corresponding to $\pp$ and that the left-hand side is the coefficient in $\eta_*([\FF]_k)$ corresponding to $\pp$.
	\end{proof}
	
	\begin{notation}
		\label{nota_gen_hyper}
		Given elements $a_1,\ldots,a_m$ in $R$, say that $g=a_1f_1+\cdots+a_mf_m$ is the associated element in $I$, that $\ell = a_1y_1+\cdots+a_my_m \in B_1$ is the associated linear form, that $D = V(g) \subset X$ is the associated hypersurface in $X$, and that $H = V_+(\ell) \subset P$ is the associated hyperplane in $P$.
		We denote by $\pi^*D := V_+(gB) = \Proj(B/gB)$ the pullback of the hypersurface $D$.
		When the residue field $\kappa$ is infinite, we use the following conventions:
		\begin{itemize}[--]
			\item We say that $D \subset X$ is general (equivalently $H \subset P$ is general)  if $g$ is a general element in $I$ (equivalently $\ell$ is a general element in $B_+$).
			\item We say that a sequence of hyperplanes $\underline{H} = H_1,\ldots,H_k$ is \emph{a  sequence of general hyperplanes} in $P$ if the associated sequence $\underline{g}=g_1,\ldots,g_k$ is a sequence of general elements in $I$.
		\end{itemize}
	\end{notation}		
	
	When $\kappa$ is infinite and $g \in I$ is a general element, the following remark shows that $\pi^*D$ is an effective Cartier divisor on $P$ even if $D=V(g)$ is not a divisor on $X$.
	
	\begin{remark}
		\label{rem_pullback_g}
		($\kappa$ infinite).
		\label{rem_effective_Div_pullback}
		Let $\overline{R} = R/(0:_RI^\infty)$ and $\overline{X} = \Spec(\overline{R})$.
		Since ${\rm Bl}_I(X) \cong {\rm Bl}_I(\overline{X})$, we may assume that $(0:_RI^\infty)=0$, and so by prime avoidance we get that $g$ is a nonzerodivisor when $g \in I$ is general.
		This shows that $\pi^*D$ is an effective Cartier divisor when $g \in I$ is general.
	\end{remark}

	The next notation includes an inequality that will be useful in our approach.
	This inequality is related to when a general element of $I$ is a $\mathscr{G}$-parameter on the Rees algebra $B=\Rees(I)$ (in the sense of \cite[Definition 2.7]{cidruiz2024polar}).
	
	\begin{notation}
		\label{nota_G_param}
		We say that the \emph{order} of the ideal $I$ is given by $o(I) := \sup\lbrace \beta \in \ZZ_{\ge0} \mid I \subseteq \mm^\beta \rbrace$.
		Let $\delta = o(I)$.
		Consider the standard bigraded algebra $\mathscr{G} := \gr_\mm(B)$ with bigraded parts $[\sG]_{(v, n)} = \mm^nB_v/\mm^{n+1}B_v$. 
		Take  the $\kappa$-vector subspace $\bb = I/\mm^{\delta+1} \subset \mm^\delta/\mm^{\delta+1} = [\sG]_{(0,\delta)}$.
		Let 
		$$
		\iniTerm^\delta(I) \;:=\; \bb \cdot \sG \;=\; \left(\iniTerm(f) \mid f \in I \text{ and } o(f) = \delta\right) \subset \sG
		$$ 
		be the ideal generated by the initial forms of elements in $I$ of order $\delta$.
		If the following strict inequality 
		$$
		\dim\left(\biProj\left(\sG/\iniTerm^\delta(I)\right)\right) \;<\; d-1
		$$
		holds, we say that $I$ satisfies the \emph{$\sG$-parameter condition generically}.
	\end{notation}
	
	The following proposition is inspired by one of the technical steps in Fulton's proof of the commutativity of intersecting with Cartier divisors: indeed, similarly to \cite[Lemma 2.4]{FULTON_INTERSECTION_THEORY}, we express the pullback  $\pi^*D$ as the sum of the exceptional divisor $E$ and a hyperplane $H$ in the blowup $P = {\rm Bl}_I(X)$.
	As a consequence, we get inequalities relating the polar multiplicities and the Segre numbers of $I$.

	\begin{proposition}
		\label{prop_decomp_div}
		{\rm(}$\kappa$ infinite{\rm)}.
		Let $H \subset P$ be a general hyperplane, $D = V(g)$ be the associated hypersurface and $\ell \in B_+$ be the associated linear form. 	 
		Then the following statements hold: 
		\begin{enumerate}[\rm (i)]
			\item $\pi^*D$ and $H$ are effective Cartier divisors.
			\item $\pi^*D = E + H$.
			\item Let $\delta = o(I)$. 
			Then we have the inequality 
			$$
			\delta \cdot m_{i-1}(I, R) \;\le\; m_i(I, R) + c_{i}(I, R) \;=\; \nu_i(I, R),
			$$	
			and equality holds for all $1 \le i \le d$ if and only if $I$ satisfies the $\sG$-parameter condition generically {\rm(}see \autoref{nota_G_param}{\rm)}.
		\end{enumerate}
	\end{proposition}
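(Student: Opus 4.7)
The plan is to handle (i) and (ii) by a direct local computation on the standard affine cover $P = \bigcup_i U_i$ of the blowup, and then extract (iii) by evaluating the polar multiplicity $m_{d-1}^{d-i}(B/gB)$ in two independent ways. On each chart $U_i = \Spec(R[I/f_i])$, the local equation of $E$ is $f_i$, while the dehomogenization of $\ell = \sum_j a_j y_j$ at $y_i$ is $h_i := \sum_j a_j(f_j/f_i) \in R[I/f_i]$, so $H$ is locally cut out by $h_i$. For general coefficients $a_j$, the element $g = \sum_j a_j f_j$ is a nonzerodivisor modulo $0 :_R I^\infty$ by \autoref{rem_pullback_g} and $\ell \in B_+$ is filter-regular on $B$, which together yield the Cartier divisor statements in (i). Then the identity $g = f_i \cdot h_i$ in $R[I/f_i]$ produces the Cartier divisor identity $\pi^*D = E + H$ chart by chart, hence globally, proving (ii).

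For the first expression in (iii), the divisor identity of (ii) forces the $(d-1)$-cycle identity $[B/gB]_{d-1} = [G]_{d-1} + [B/\ell B]_{d-1}$ in $Z_{d-1}(P)$. Since polar multiplicities depend only on the associated top-dimensional cycle (see \cite{cidruiz2024polar}), this yields $m_{d-1}^{d-i}(B/gB) = m_{d-1}^{d-i}(G) + m_{d-1}^{d-i}(B/\ell B) = c_i(I,R) + m_{d-1}^{d-i}(B/\ell B)$. A standard leading-coefficient comparison applied to the short exact sequence $0 \to B(-1) \xrightarrow{\ell} B \to B/\ell B \to 0$ (using that $\ell$ is filter-regular of degree one on $B$) gives $m_{d-1}^{d-i}(B/\ell B) = m_d^{d-i}(B) = m_i(I,R)$ for $1 \le i \le d$, so altogether $m_{d-1}^{d-i}(B/gB) = \nu_i(I,R)$.

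For the second expression, I would exploit that $g \in R = [B]_0$ lies in $\mm^\delta$. Multiplication by $g$ on $B_v/\mm^{n+1}B_v$ is an $R$-linear endomorphism of finite-length modules, so its kernel and cokernel have equal length, giving $P_{B/gB}(v,n) = \length((\mm^{n+1}B_v :_{B_v} g)/\mm^{n+1}B_v)$. The inclusion $\mm^{n+1-\delta}B_v \subseteq \mm^{n+1}B_v :_{B_v} g$ then yields $P_{B/gB}(v,n) \ge P_B(v,n) - P_B(v, n-\delta)$ for $v, n \gg 0$. Since the top-degree part of the right-hand side equals $\delta \cdot \partial_n$ applied to the leading bihomogeneous part of $P_B$, extracting the coefficient of $v^{d-1-k} n^k$ at $k = d - i$ gives $m_{d-1}^{d-i}(B/gB) \ge \delta \cdot m_d^{d-i+1}(B) = \delta \cdot m_{i-1}(I,R)$. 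Combining with the previous paragraph produces $\delta \cdot m_{i-1}(I,R) \le \nu_i(I,R)$.

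Finally, equality for every $1 \le i \le d$ corresponds to $\mm^{n+1-\delta}B_v = \mm^{n+1}B_v :_{B_v} g$ modulo terms of lower bidegree growth, which after passing to $\sG = \gr_\mm(B)$ is the statement that $\iniTerm(g)$ is a $\sG$-parameter on $\sG$. For a general $g \in I$ the initial form $\iniTerm(g)$ is a generic element of $\bb = I/\mm^{\delta+1} \subset [\sG]_{(0,\delta)}$, and by the $\sG$-parameter characterization in \cite{cidruiz2024polar} this is equivalent to the dimension bound $\dim(\biProj(\sG/\iniTerm^\delta(I))) < d-1$ of \autoref{nota_G_param}. I expect the main obstacle to lie precisely in this last translation: tracking the lower-order corrections rigorously and reading the intrinsic condition on $\iniTerm^\delta(I)$ from the genericity of $\iniTerm(g)$ will require careful use of the bigraded $\sG$-parameter machinery developed in \cite{cidruiz2024polar}.
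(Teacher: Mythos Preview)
Your argument follows the paper's route essentially step for step: parts (i) and (ii) are handled identically via the local identity $g = f_i \cdot h_i$ on each $U_i$, and part (iii) proceeds by the same double evaluation of $m_{d-1}^{d-i}(B/gB)$, first through the cycle identity $[B/gB]_{d-1} = [G]_{d-1} + [B/\ell B]_{d-1}$ and additivity, then through the $\mm$-adic order of $g$. The only difference is cosmetic: where the paper invokes \cite[Theorem~2.8 and Proposition~2.10]{cidruiz2024polar} as black boxes for $m_{d-1}^{d-i}(B/gB) \ge \delta\, m_d^{d-i+1}(B)$ and $m_{d-1}^{d-i}(B/\ell B) = m_d^{d-i}(B)$, you sketch the length-counting arguments that constitute those proofs, and your closing paragraph correctly recognizes that the equality characterization is precisely the $\sG$-parameter criterion.

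One technical point to tighten: from the two-variable inequality $P_{B/gB}(v,n) \ge P_B(v,n) - P_B(v,n-\delta)$ you cannot in general read off each top-degree coefficient separately (think of $(v-n)^2 \ge 0$). What makes it work here is that the difference equals $\length\big((\mm^{n+1}B_v :_{B_v} g)/\mm^{n+1-\delta}B_v\big)$, i.e.\ the Hilbert-type function of the bigraded $\sG$-module $\big(0:_{\sG}\iniTerm(g)\big)$ up to a shift, so its polar multiplicities are automatically nonnegative; this is exactly the mechanism behind \cite[Theorem~2.8]{cidruiz2024polar}, and once you make it explicit your argument is complete.
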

	\begin{proof}
		(i) From \autoref{rem_effective_Div_pullback}, we get that $\pi^*D$ is an effective Cartier divisor. 
		We have that $H$ is also an effective Cartier divisor by prime avoidance.
		
		(ii) We write $g = a_1f_1+\cdots+a_mf_m$ and $\ell = a_1y_1+\cdots + a_my_m$.
		Consider the affine open subscheme $U_i = \Spec(R[I/f_i])$. 
		As we mentioned before, the local equation of $E$ on $U_i$ is given by $f_i \in R[I/f_i]$.
		On the other hand, the local equations of $\pi^*D$ and $H$ on $U_i$ are given by 
		$$
		a_1f_1 + \cdots + a_mf_m \;\in\; R[I/f_i] \qquad \text{ and } \qquad  \frac{a_1f_1 + \cdots + a_mf_m}{f_i} \;\in\; R[I/f_i],
		$$
		respectively. 
		This shows the equality $\pi^*D = E + H$ on each $U_i$, and so the equality holds globally on the whole  blowup $P = {\rm Bl}_I(X)$.
		
		(iii)	
		First, we check that in the vacuous case $\dim(P) < d$, we have that $m_i(I, R) = 0$, $\nu_i(I, R) =0$, $\dim(B) \le d$ and $\dim\left(\biProj(\sG)\right) \le d-2$ (hence the equivalence statement holds trivially).
		Therefore, we assume $\dim(P)=d$.

		As $g \in I$ is general, we may assume that $g \in \mm^\delta \setminus \mm^{\delta+1}$.
		Notice that $\big[\pi^*D\big]_{d-1} = \big[\widetilde{B/gB}\big]_{d-1}$ and $\big[H\big]_{d-1} = \big[\widetilde{B/\ell B}\big]_{d-1}$, and thus part (ii) yields the equality of cycles 
		$$
		\big[\widetilde{B/gB}\big]_{d-1} \;=\; \big[\widetilde{B/\ell B}\big]_{d-1} + \big[\widetilde{G}\big]_{d-1} \;\in\; Z_{d-1}(P);
		$$
		see, e.g., \cite[Lemma A.2.5]{FULTON_INTERSECTION_THEORY}.
		Hence the additivity of polar multiplicities (see \cite[Corollary 2.6]{cidruiz2024polar}) implies that 
		$$
		m_{d-1}^i(B/gB) \;=\; m_{d-1}^i(B/\ell B) + m_{d-1}^i(G).
		$$
		Due to \cite[Theorem 2.8]{cidruiz2024polar}, we have $m_{d-1}^i(B/gB) \ge \delta \cdot m_d^{i+1}(B)$ and an equality holds for all $0 \le i \le d-1$ if and only if $g$ is a $\mathscr{G}$-parameter on $B$ (in the sense of \cite[Definition 2.7]{cidruiz2024polar}).
		Since $\ell \in B_+$ is a general element, by utilizing  \cite[Proposition 2.10]{cidruiz2024polar},  we obtain $m_{d-1}^i(B/\ell B) = m_d^i(B)$.
		Finally, by combining everything we get the inequality 
		$$
		\delta \cdot m_{i-1}(I, R) \;=\;  \delta \cdot m_d^{d-i+1}(B) \;\le\; m_d^{d-i}(B) + m_{d-1}^{d-i}(G) \;=\; m_i(I, R) + c_i(I, R), 
		$$
		and an equality holds for all $1 \le i \le d$ if and only if $g$ is a $\mathscr{G}$-parameter on $B$.
		Since $g$ is a general element of $I$, it follows that $g$ is a $\sG$-parameter on $B$ if and only if $I$ satisfies the $\sG$-parameter condition generically.
	\end{proof}

	\begin{remark}
		The equality of \autoref{prop_decomp_div}(ii) can also be derived as follows. 
		Consider the extended Rees algebra $\Rees^+(I) := R[IT, T^{-1}]$.
		Notice that the equality $g = \ell \cdot T^{-1}$ holds in $\Rees^+(I)$ and recall the isomorphism $\gr_I(R) \cong \Rees^+(I)/ T^{-1}\Rees^+(I)$.
	\end{remark}

	Next, we introduce the notion of polar schemes and Segre cycles. 
	We also introduce a new type of cycle that will be fundamental in our treatment. 
	The following additional data is fixed for the rest of the section.

	\begin{setup}
		\label{setup_segre_cycles}
		Assume \autoref{setup_polar_Segre} and that the residue field $\kappa$ is infinite.
		Let $\underline{H} = H_1,\ldots, H_d$ be a sequence of general hyperplanes, and denote by $\underline{g}=g_1,\ldots,g_d$ the associated sequence of elements in $I$ and by $\underline{\ell} = \ell_1, \ldots, \ell_d$ the associated sequence of linear forms in $B_+$.
		We also set $D_i = V(g_i) \subset X$ and recall that the pullback $\pi^*D_i$ is an effective Cartier divisor on $P$ (see \autoref{rem_effective_Div_pullback}).
		We introduce the following objects: 
		\begin{enumerate}[\rm (i)]
			\item For $1 \le i \le d$, we say that the \emph{$i$-th polar scheme} (with respect to $\underline{H}$) is given by the following schematic-image
			$$
			P_i(I, X)  \;=\; P_i^{\underline{H}}(I, X)  \;:=\; \pi\left(H_1 \cap \cdots \cap H_i\right).
			$$
			\item For $1 \le i \le d$, we say that the \emph{$i$-th Segre cycle} (with respect to $\underline{H}$) is given by 
			$$
			\Lambda_i(I, X)  \;=\; \Lambda_i^{\underline{H}}(I, X)  \;:=\; \pi_*\left(\big[E \cap H_1 \cap \cdots \cap H_{i-1}\big]_{d-i}\right) \;\in\; Z_{d-i}(X).
			$$
			\item For $1 \le i \le d$, we say that the \emph{$i$-th polar-Segre cycle} (with respect to \underline{$H$}) is given by 
			$$
			V_i(I, X)  \;=\; V_i^{\underline{H}}(I, X)  \;:=\; \pi_*\left(\big[H_1 \cap \cdots \cap H_{i-1} \cap \pi^*D_i\big]_{d-i}\right) \;\in\; Z_{d-i}(X).
			$$
		\end{enumerate} 
		By convention, we set $P_0(I, X):=\pi(P)$ and $V_0(I, X) := \left[P_0(I, X)\right]_d \in Z_d(X)$.
	\end{setup}
	
	\begin{remark}
		\label{rem_filter_reg_seq}
		By prime avoidance, we can assume that $g_1,\ldots,g_{i}$ is a regular sequence on $R_\pp$ for each $\pp \in V(g_1,\ldots,g_i) \setminus V(I)$ (we say that $g_1,\ldots,g_i$ are a filter-regular sequence with respect to $I$; see \cite[Appendix]{STUCKRAD_VOGEL_BUCHSBAUM_RINGS}).
		Therefore, we have that $(g_1,\ldots,g_{i}):_RI^\infty$ either equals $R$ or it has height $i$, and so we obtain $\dim(R/(g_1,\ldots,g_{i}):_RI^\infty) \le d-i$.
		Similarly, we get $\dim(R/(g_1,\ldots,g_{i-1}):_RI^\infty+g_iR) \le d-i$.
	\end{remark}

	The theorem below gives an important description of the invariants $m_i(I, R)$, $c_i(I, R)$ and $\nu_i(I, R)$.
	It shows that these invariants are naturally the multiplicities of the cycles introduced in \autoref{setup_segre_cycles}.
	
	\begin{theorem}
		\label{thm_polar_segre_cycles}
		Assume \autoref{setup_segre_cycles}.
		Then the following statements hold: 
		\begin{enumerate}[\rm (i)]
			\item For all $0 \le i \le d$, we have the equalities $m_i(I, R) = e_{d-i}\left(P_i(I, X)\right)$ and 
			$$
			P_i(I, X) \;=\; \Spec\left(R/(g_1,\ldots,g_i):_RI^\infty\right).
			$$
			\item For all $1 \le i \le d$, we have the equalities $c_i(I, R) = e_{d-i}\left(\Lambda_ i(I, X)\right)$ and 
			$$
			\Lambda_ i(I, X) \;=\; \sum_{\substack{\pp \in V\left((g_1,\ldots,g_{i-1}):_RI^\infty\right)\\ \pp \in V(I), \; \dim(R/\pp)=d-i}} e\big(I,\, R_\pp/(g_1,\ldots,g_{i-1})R_\pp:_{R_\pp} I^\infty R_\pp\big)\cdot \left[R/\pp\right] \;\in\; Z_{d-i}(X). 
			$$
			\item For all $1 \le i \le d$, we have the equalities $\nu_i(I, R) = e_{d-i}(V_i(I, X))$ and 
			$$
			V_i(I, X) \;=\; \left[P_i(I, X)\right]_{d-i} + \Lambda_i(I, X) \;=\; \big[\Spec\left(R/(g_1,\ldots,g_{i-1}):_RI^\infty+g_iR\right)\big]_{d-i} \;\in\, Z_{d-i}(X).
			$$		
		\end{enumerate}		
	\end{theorem}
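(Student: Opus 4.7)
The plan is to prove all three parts by combining the theory of polar multiplicities through general hyperplane sections (using \cite{KLEIMAN_THORUP_GEOM, cidruiz2024polar}) with explicit identifications of image ideals via Rees-algebra saturations. The three parts are tied together through the divisor decomposition $\pi^*D_i = E + H_i$ already established in \autoref{prop_decomp_div}(ii).

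For part (i), I would first address the scheme-theoretic identification $P_i(I, X) = \Spec(R/(g_1,\ldots,g_i):_R I^\infty)$. The scheme-theoretic image of $V_+(\ell_1,\ldots,\ell_i) \subset P$ under $\pi$ is cut out by the ideal $K = \{a \in R \mid a B_n \subset (\ell_1,\ldots,\ell_i)B \text{ for some } n \gg 0\}$. Since $\ell_j = g_j T$ in $B$, the graded-$n$ component of $(\ell_1,\ldots,\ell_i)B$ equals $(g_1,\ldots,g_i) I^{n-1} T^n$, so $K$ consists of $a \in R$ satisfying $a I^n \subset (g_1,\ldots,g_i) I^{n-1}$ for $n \gg 0$. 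One direction yields $a \in (g_1,\ldots,g_i):_R I^\infty$ immediately; the converse uses the generality of $g_1,\ldots,g_i$ as superficial elements for $I$, which provides $(g_1,\ldots,g_i) \cap I^n = (g_1,\ldots,g_i) I^{n-1}$ for $n \gg 0$. For the multiplicity equality, iterating \cite[Proposition 2.10]{cidruiz2024polar} $i$ times with the general linear forms $\ell_1,\ldots,\ell_i$ reduces $m_i(I, R) = m_d^{d-i}(B)$ to the Hilbert-Samuel multiplicity of the $B_+$-saturation of the degree-zero part of $B/(\ell_1,\ldots,\ell_i)B$, which by the preceding identification is $R/(g_1,\ldots,g_i):_R I^\infty$.

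Part (ii) proceeds analogously with $G = \gr_I(R)$ in place of $B$, using $i-1$ rather than $i$ cuts (since $\dim(E) \le d-1$ and $c_i(I,R) = m_{d-1}^{d-i}(G)$). That the restrictions of general hyperplanes $H_j \subset P$ to $E$ remain general in $G_+$ is a standard flat-family consideration. The explicit cycle formula is then obtained by localizing the push-forward along $\pi|_E$ at each $(d-i)$-dimensional prime $\mathfrak{p} \in V(I)$ containing $(g_1,\ldots,g_{i-1}):_R I^\infty$: the local contribution corresponds to the Hilbert-Samuel multiplicity $e(I, R_\mathfrak{p}/(g_1,\ldots,g_{i-1})R_\mathfrak{p}:_{R_\mathfrak{p}} I^\infty R_\mathfrak{p})$, identified with the relevant polar multiplicity of the associated graded ring in a one-dimensional local quotient.

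Part (iii) then follows by combining parts (i), (ii), and the divisor decomposition. Splitting $[H_1 \cap \cdots \cap H_{i-1} \cap \pi^*D_i]_{d-i}$ via $\pi^*D_i = E + H_i$ as $[H_1 \cap \cdots \cap H_i]_{d-i} + [H_1 \cap \cdots \cap H_{i-1} \cap E]_{d-i}$ and pushing forward gives $V_i(I,X) = [P_i(I,X)]_{d-i} + \Lambda_i(I,X)$; additivity of multiplicities then yields $\nu_i(I, R) = e_{d-i}(V_i(I, X))$. For the alternative scheme description $V_i(I,X) = [\Spec(R/(g_1,\ldots,g_{i-1}):_R I^\infty + g_i R)]_{d-i}$, I would use a primary-decomposition argument: the minimal primes $\mathfrak{p}$ of dimension $d-i$ of $(g_1,\ldots,g_{i-1}):_R I^\infty + g_i R$ split disjointly into those not containing $I$ (contributing $[P_i(I,X)]_{d-i}$, by localizing part (i) and using $I R_\mathfrak{p} = R_\mathfrak{p}$) and those containing $I$ (contributing $\Lambda_i(I,X)$). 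The main technical obstacle is reconciling the multiplicity contributions at the $I$-containing primes: showing $\length_{R_\mathfrak{p}}\bigl(R_\mathfrak{p}/((g_1,\ldots,g_{i-1})R_\mathfrak{p}:_{R_\mathfrak{p}} I^\infty R_\mathfrak{p} + g_i R_\mathfrak{p})\bigr) = e(I, R_\mathfrak{p}/(g_1,\ldots,g_{i-1})R_\mathfrak{p}:_{R_\mathfrak{p}} I^\infty R_\mathfrak{p})$, which hinges on the generality of $g_i$ as a superficial element realizing the Hilbert-Samuel multiplicity as a colength in the one-dimensional local quotient, together with careful dimension-and-depth tracking through the $I^\infty$-colons to guarantee the quotient is indeed one-dimensional.
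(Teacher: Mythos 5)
Your proposal follows essentially the same strategy as the paper and its main pillars are correct: iterate \cite[Proposition 2.10]{cidruiz2024polar} with general linear forms, use the superficial-sequence lemma $\aaa_i I^{v-1} = I^v \cap \aaa_i$ ($v \gg 0$) to identify the scheme-theoretic images, push forward along $\pi$ and compute local contributions, and obtain $V_i = [P_i]_{d-i} + \Lambda_i$ from the divisor decomposition $\pi^*D_i = E + H_i$. For part (i) you compute the image ideal $K$ directly, whereas the paper does $i=0$ via the four-term local-cohomology sequence for $B$ and then reduces general $i$ to $i=0$ on $X_i$ through the scheme identity $H_1 \cap \cdots \cap H_i = {\rm Bl}_I(X_i)$; both routes rest on the same superficial-sequence input. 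The real divergence is the finish of part (iii). Your prime-by-prime case analysis (split of the $(d-i)$-dimensional primes into those containing $I$ and those not, which is indeed a disjoint partition since $(g_1,\ldots,g_i)R_\pp:I^\infty$ cannot be $\pp R_\pp$-primary and proper when $I \subset \pp$) is sound, but its key step--that $\length\bigl(R_\pp/((g_1,\ldots,g_{i-1})R_\pp:I^\infty + g_iR_\pp)\bigr) = e(I, R_\pp/(g_1,\ldots,g_{i-1})R_\pp:I^\infty)$--requires $g_i$ to be general in $IR_\pp$ for each such $\pp$, which is not automatic from generality at $\mm$; one must pass to a purely transcendental base extension and invoke \cite[Lemma 2.6]{PTUV}, as the paper does elsewhere. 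The paper's proof of (iii) sidesteps this entirely: after reducing to $i=1$ on $\overline{X}_{i-1}$ (with $g := g_1$ a nonzerodivisor), the coefficient of $\pi_*[\pi^*D]$ at $\pp$ is computed as $\length(I^vR_\pp/gI^vR_\pp) = e((g), I^vR_\pp) = e((g), R_\pp) = \length(R_\pp/gR_\pp)$ via the length-to-multiplicity conversion \cite[Corollary 1.2.14]{FLENNER_O_CARROLL_VOGEL} and additivity of multiplicities, which needs only that $g$ be a nonzerodivisor, not locally general. So your approach is correct but costs one extra technical reduction that the paper's argument makes unnecessary.
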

	\begin{proof}
		Let $\aaa_i:= (g_1,\ldots,g_i) \subset R$, $R_i := R/\aaa_i$, $X_i := \Spec(R_i)$ and $\overline{X}_i :=\Spec(R/\aaa_i:_RI^\infty)$.
		
		(i) 
		First, set $i=0$.
		By \cite[Proposition 2.10]{cidruiz2024polar}, we have $m_0(I, R) = m_d^d(B) = e_d\left(\HH^0(P, \OO_P)\right)$.
		We have that $P_0(I, X) = \pi(P) = \Spec(R/\aaa)$ where $\aaa \subset R$ is the kernel of the natural map $R \xrightarrow{\;\rm nat\;} \HH^0(P, \OO_P)$ (see, e.g., \cite[Proposition 10.30]{GORTZ_WEDHORN}, \cite[Exercise II.3.11]{HARTSHORNE}).  
		On the other hand, we have a four-term exact sequence 
		$$
		0 \;\longrightarrow\; \big[\HH_{B_+}^0(B)\big]_0  \;\longrightarrow\; R = B_0 \;\xrightarrow{\; \rm nat \;}\; \HH^0(P, \OO_P) \;\longrightarrow\; \big[\HH_{B_+}^1(B)\big]_0 \;\longrightarrow\; 0.
		$$
		Since $B = R[IT]$ is the Rees algebra of $I$, it follows that $\big[\HH_{B_+}^0(B)\big]_0 = 0 :_R I^\infty$.
		Hence we have $\aaa = 0 :_R I^\infty$ and $P_0(I, X) = \pi(P) = \Spec(R/0:_RI^\infty)$.
		Moreover, we get the short exact sequence 
		$$
		0 \rightarrow R/0:_RI^\infty \rightarrow \HH^0(P, \OO_P) \rightarrow \big[\HH_{B_+}^1(B)\big]_0 \rightarrow 0,
		$$
		and so to prove $m_0(I, R) = e_d(P_0(I, X))$, it suffices to show that $\dim\big(\big[\HH_{B_+}^1(B)\big]_0\big) < d$.
		Let $\overline{B} := B / \HH_{B_+}^0(B)$. 
		As $\HH_{B_+}^1(B) \cong \HH_{B_+}^1(\overline{B})$ and $\left[\overline{B}\right]_0 = R/0:_RI^\infty$, we have that $\HH_{B_+}^1(B) \otimes_R R_\pp = 0$ for any minimal prime $\pp \in {\rm Min}(R)$ that contains $I$.
		However, if a prime $\pp \in \Spec(R)$ does not contain $I$, we obtain 
		$$
		\big[\HH_{B_+}^1(B)\big]_0 \otimes_R R_\pp \;\cong\; \big[\HH_{B_+}^1(B \otimes_R R_\pp)\big]_0 \;\cong\; \big[\HH_T^1\left(R_\pp[T]\right)\big]_0 = 0.
		$$
		This settles the claim that $\dim\big(\big[\HH_{B_+}^1(B)\big]_0\big) < d$, and so the proof is complete for the case $i=0$.
		
		Since $\ell_1,\ldots,\ell_i$ is a sequence of general elements in $B_+$, \cite[Proposition 2.10]{cidruiz2024polar} yields the equalities $m_i(I, R) = m_d^{d-i}(B) = m_{d-i}^{d-i}(B/(\ell_1,\ldots,\ell_i)B) = e_{d-i}\left(\HH^0(P, \OO_{H_1\cap\cdots\cap H_i})\right)$.
		Consider the natural specialization map 
		$$
		\mathfrak{s} \;:\;  B/(\ell_1,\ldots,\ell_i)B \;\cong\; \bigoplus_{v\ge0} I^v/\aaa_iI^{v-1} \;\;\;\surjects\;\;\; \Rees_{R_i}(IR_i) \;\cong\; \bigoplus_{v\ge0} I^v/(I^v \cap \aaa_i).
		$$
		Since $g_1,\ldots,g_i$ is a sequence of general elements in $I$, we may assume that they form a superficial sequence for $I$ (see \cite[Proposition 8.5.7]{huneke2006integral}), and so \cite[Lemma 8.5.11]{huneke2006integral} yields the equality $\aaa_i I^{v-1} = I^v \cap \aaa_i$ for $v \gg 0$.
		Therefore, we get the following equality (as schemes) 
		\begin{equation}
			\label{eq_specialization}
			H_1\cap \cdots \cap H_i \;=\; \Proj(B/(\ell_1,\ldots,\ell_i)B) \;=\; \Proj(\Rees_{R_i}(IR_i)) \;=\; {\rm Bl}_I(X_i) \;=:\; P_i.
		\end{equation}
		Since we already dealt with the initial case, by substituting $X$ by $X_i$ and setting $i = 0$,  we obtain 
		$$
		P_i(I, X) \;=\; \pi(H_1\cap \cdots \cap H_i) \;=\; \pi(P_i) \;=\; P_0(I, X_i) \;=\; \Spec\left(R/(g_1,\ldots,g_i):_RI^\infty\right)
		$$
		and 
		$$
		m_i(I, R) \;=\; e_{d-i}\left(\HH^0(P, \OO_{H_1\cap\cdots\cap H_i})\right) \;=\; e_{d-i}\left(\HH^0\left(P_i, \OO_{P_i}\right)\right) \;=\; e_{d-i}\left(P_0(I, X_i)\right).
		$$
		This completes the proof of part (i).
		
		(ii) First, set $i=1$.
		By \cite[Proposition 2.10]{cidruiz2024polar}, we get $c_1(I, R) = m_{d-1}^{d-1}(G) = e_{d-1}\left(\HH^0(E, \OO_E)\right)$.
		From \autoref{defprop_push}, we have the equality of cycles
		$$
		\Lambda_1(I, X) \;=\; \big[\pi_*(\OO_E)\big]_{d-1} \;=\; \big[\HH^0(X, \pi_*(\OO_E))\big]_{d-1} \;=\;  \big[\HH^0(E, \OO_E)\big]_{d-1} \;\in\; Z_{d-1}(X),
		$$
		and so it follows that $c_1(I, R) = e_{d-1}(\Lambda_1(I, X))$.
		Since we proved $\pi(P) = \Spec(R/0:_RI^\infty)$ in part (i), it follows that $\Supp\left(\HH^0(E, \OO_E)\right) \subset V(I, 0:_RI^\infty)$.
		Then we get the following equality
		$$
		\left[\HH^0(E, \OO_E)\right]_{d-1} \;=\; \sum_{\substack{\pp \in V(I, 0:_RI^\infty)\\ \dim(R/\pp)=d-1}} \length_{R_\pp}\left(\HH^0(E, \OO_E)\otimes_RR_\pp\right) \cdot [R/\pp] \;\in\; Z_{d-1}(X).
		$$
		Fix a prime $\pp$ in the above summation.
		Let $E_\pp := \Proj\big(\gr_{IR_\pp}(R_\pp)\big)$ be the exceptional divisor of $\Spec(R_\pp)$ along $IR_\pp$.
		Notice that $\dim(R_\pp) = 1$ and $\HH^0(E, \OO_E)\otimes_RR_\pp \cong \HH^0(E_\pp, \OO_{E_\pp})$.
		As a consequence, for $v \gg 0$, we obtain
		$$
		\length_{R_\pp}\left(\HH^0(E_\pp, \OO_{E_\pp})\right) \;=\; \length_{R_\pp}\left(\HH^0(E_\pp, \OO_{E_\pp}(v))\right) \;=\; \length_{R_\pp}\left(I^vR_\pp/I^{v+1}R_\pp\right);
		$$
		the first equality holds for any $v \in \ZZ$ because $\dim(E_\pp) \le 0$. 
		On the other hand, we have 
		$$
		e\left(I, R_\pp/0:_{R_\pp} I^\infty R_\pp\right) \;=\; \length_{R_\pp}\left(I^vR_\pp/I^{v+1}R_\pp\right) \quad \text{ for } \quad v \gg 0.
		$$
		This completes the proof for the case $i = 1$.
		
		Since $\ell_1,\ldots,\ell_{i-1}$ is a sequence of general elements in $B_+$, \cite[Proposition 2.10]{cidruiz2024polar} gives $c_i(I, R) = m_{d-1}^{d-i}(G) = m_{d-i}^{d-i}(G/(\ell_1,\ldots,\ell_{i-1})G) = e_{d-i}\left(\HH^0(P, \OO_{E \cap H_1\cap\cdots\cap H_{i-1}})\right)$.
		From \autoref{eq_specialization}, we obtain the following equality (as schemes)
		\begin{equation}
			\label{eq_specialization_E}
			E \cap H_1 \cap \cdots \cap H_{i-1} \;=\; (H_1\cap \cdots \cap H_{i-1}) \times_X \Spec(R/I) \;=\; P_{i-1} \times_X \Spec(R/I) \;=\;  E_{i-1}
		\end{equation}
		where $E_{i-1} = \Proj(\gr_{IR_{i-1}}(R_{i-1}))$ is the exceptional divisor of $X_{i-1}$ along $IR_{i-1}$.	
		Then the result of part (ii) follows. 
		
		(iii) 
		We have the following equalities 
		\begin{alignat*}{3}
			V_i(I, X) &\;=\; \pi_*\left(\left[H_1 \cap \cdots \cap H_{i-1} \cap \pi^*D_i\right]_{d-i}\right) & \text{by definition of $V_i(I, X)$}\\ 
			&\;=\; \pi_*\left(\left[P_{i-1} \cap \pi^*D_i\right]_{d-i}\right) & \text{by \autoref{eq_specialization}}\\
			&\;=\; \left[V_1(I, \overline{X}_{i-1})\right]_{d-i}  & \text{by definition of $V_1(I, \overline{X}_{i-1})$}\\
			&\;=\; \left[P_1(I, \overline{X}_{i-1})\right]_{d-i} + \left[\Lambda_1(I, \overline{X}_{i-1})\right]_{d-i} & \text{ \quad\quad\; by applying \autoref{prop_decomp_div} to $\overline{X}_{i-1}$}\\
			&\;=\; \left[P_i(I, X)\right]_{d-i} + \Lambda_i(I, X) & \text{by the formulas of part (i) and (ii).}
		\end{alignat*}
		Therefore, to complete the proof of part (iii), we substitute $X$ by $\overline{X}_{i-1}$ and assume that $i=1$.
		In particular, we may assume that $(0:_RI^\infty)=0$ and that $g_1$ is a nonzerodivisor. 
		Set $g := g_1$ and $D := D_1$.	
		By \autoref{defprop_push}, we obtain
		$$
		V_1(I,X) = \big[\pi_*\left(\OO_{\pi^*D}\right)\big]_{d-1} = \sum_{\substack{\pp \in V(g)\\ \dim(R/\pp)=d-1}} \length_{R_\pp}\left(\HH^0(P, \OO_P/g\OO_P)\otimes_RR_\pp\right) \cdot [R/\pp] \;\in\; Z_{d-1}(X).
		$$
		Let $\pp$ be a prime in the above summation, and let $P_\pp := \Proj(B \otimes_R R_\pp)$ the blowup of $\Spec(R_\pp)$ along $IR_\pp$.
		Then the coefficient corresponding to $\pp$ is given by 
		$$
		\length_{R_\pp}\left(\HH^0(P_\pp, \OO_{P_\pp}/g\OO_{P_\pp})\right) \;=\; \length_{R_\pp}\left(\HH^0(P_\pp, \OO_{P_\pp}/g\OO_{P_\pp}(v))\right) \;=\; \length_{R_\pp}\left(I^vR_\pp/gI^{v}R_\pp\right)
		$$
		for $v \gg 0$.
		Expressing length as a multiplicity (see \cite[Corollary 1.2.14]{FLENNER_O_CARROLL_VOGEL}) and utilizing the additivity of multiplicities give the following 
		$$
		\length_{R_\pp}\left(I^vR_\pp/gI^{v}R_\pp\right) \;=\; e\left((g), I^vR_\pp\right) \;=\; e\left((g), R_\pp\right) \;=\; \length_{R_\pp}\left(R_\pp/gR_\pp\right).
		$$
		Hence the required equality $V_1(I, X) = \left[\Spec(R/gR)\right]_{d-1}$ follows.
		This completes the proof of the last part of the theorem.
	\end{proof}

	\begin{remark}
		\label{rem_formulas}
		Due to \autoref{thm_polar_segre_cycles}, we obtain the following consequences:
		\begin{enumerate}[\rm (i)]
			\item We have the equations
			\begin{enumerate}[\rm (a)]
				\item $\big[P_i(I, X)\big]_{d-i} \;=\; \big[P_1\left(I, P_{i-1}(I, X)\right)\big]_{d-i}$.
				\item $\Lambda_i(I, X) \;=\; \big[\Lambda_1\left(I, P_{i-1}(I, X)\right)\big]_{d-i}$.
				\item $V_i(I, X) \;=\; \big[V_1\left(I, P_{i-1}(I, X)\right)\big]_{d-i}$.
			\end{enumerate}
			\item We recover the formula of \cite[Theorem 3.4]{Trung2001} for the mixed multiplicities of an ideal
			$$
			m_i(I, R) \;=\; e_{d-i}\left(R/(g_1,\ldots,g_i):_RI^\infty\right).
			$$
			\item The new invariants (i.e., polar-Segre multiplicities) have the following formula
			$$
			\nu_i(I, R) \;=\; e_{d-i}\left(R/(g_1,\ldots,g_{i-1}):_RI^\infty+g_iR\right).
			$$
			\item We obtain the following formula for Segre numbers
			$$
			c_i(I, R) \;=\; \sum_{\substack{\pp \in V\left((g_1,\ldots,g_{i-1}):_RI^\infty\right)\\ \pp \in V(I), \; \dim(R/\pp)=d-i}} e\big(I,\, R_\pp/(g_1,\ldots,g_{i-1})R_\pp:_{R_\pp}I^\infty R_\pp\big)\cdot e(R/\pp).
			$$ 
			On the other hand, the formulas of part (i) and (iii) yield
			\begin{align*}
				c_i(I, R) &\;=\; e_{d-i}\left(R/(g_1,\ldots,g_{i-1}):_RI^\infty+g_iR\right) \;-\; e_{d-i}\left(R/(g_1,\ldots,g_i):_RI^\infty\right)\\
				&\;=\; e_{d-i}\left(\HH_I^0\big(R/(g_1,\ldots,g_{i-1}):_RI^\infty+g_iR\big)\right)\\
				&\;=\; \sum_{\substack{\pp \in V\left((g_1,\ldots,g_{i-1}):_RI^\infty\right)\\ \pp \in V(I), \; \dim(R/\pp)=d-i}} \length_{R_\pp}\left(
				\frac{R_\pp}
				{(g_1,\ldots,g_{i-1})R_\pp :_{R_\pp}  I^\infty R_\pp + g_iR_\pp} \right)\cdot e(R/\pp).
			\end{align*}
			This recovers the length formula of \cite[Proposition 2.1]{PTUV} for the Segre numbers of an ideal.
			Therefore, for any prime $\pp$ in the summation above, it follows that  
			$$
			e(I,\, R_\pp/(g_1,\ldots,g_{i-1})R_\pp:_{R_\pp}I^\infty R_\pp) \;=\; \length_{R_\pp}\left( R_\pp/\left((g_1,\ldots,g_{i-1})R_\pp:_{R_\pp} I^\infty R_\pp + g_iR_\pp\right)\right).
			$$
			In any case, this is expected: if the residue field $\kappa$ is not an algebraic extension of a finite field, we may assume that $g_i \in I$ is general in $IR_\pp$ for each prime $\pp$ above (see \cite[Lemma 2.6]{PTUV}).
		\end{enumerate}
	\end{remark}

	\section{Integral dependence and specialization}
	\label{sect_integral_dep}
	
	In this section, we discuss how integral dependence can be detected by utilizing the invariants that we study in \autoref{sect_invariants_cycles}.
	We also discuss when integral closure and polar schemes specialize.
	Moreover, we generalize the result of Gaffney and Gassler \cite{GG} regarding the lexicographic upper semicontinuity of Segre numbers (see \autoref{thm_Segre_lex_upper}).
	Throughout this section, we continue using \autoref{setup_polar_Segre}.
	
	To deal with the case where $\HT(I) = 0$, we also need to consider an additional number that is denoted as $c_0(I, R)$.
	Let $\mathscr{H} := \gr_\mm(G) = \gr_\mm\left(\gr_I(R)\right) $ with standard bigrading $[\mathscr{H}]_{(v, n)} = \mm^{n}G_v/\mm^{n+1}G_v$.
	The first sum transform of the Hilbert function of $\mathscr{H}$ with respect to $n$ is equal to 
	$$
	H_\mathscr{H}^1(v, n) := \sum_{k = 0}^{n}\dim_\kappa\left([\mathscr{H}]_{(v, n)}\right) = \length_R\left(G_v/\mm^{n+1}G_v\right),
	$$ 
	and it encodes the polar multiplicities of the standard graded algebra $G=\gr_I(R)$ (i.e., the Segre numbers $c_1(I, R), \ldots, c_d(I, R)$; see \autoref{sect_invariants_cycles}).
	If we further consider the second sum transform $H_\mathscr{H}^2(v, n) := \sum_{k = 0}^{v}H_\mathscr{H}^1(k,n)$, for $v \gg 0$ and $n \gg 0$, we get a polynomial 
	$$
	H_\mathscr{H}^2(v, n) \;=\; \sum_{i=0}^d\; \frac{c_i(I, R)}{i!\,(d-i!)}\; v^in^{d-i} \;+\; \text{(lower degree terms)},
	$$
	that encodes the Segre numbers $c_1(I, R),\ldots,c_d(I, R)$ defined in \autoref{sect_invariants_cycles}, but also the new number $c_0(I, R)$.
	This leads to the following definition.

	\begin{definition}[Achilles--Manaresi \cite{AM_MULT_SEQ}]
		The \emph{multiplicity sequence} of the ideal $I \subset R$ is given by 
		$$
		\big(c_0(I, R), c_1(I, R), \ldots, c_d(I, R)\big) \;\in\; \ZZ_{\ge 0}^{d+1}.
		$$
	\end{definition}
	
	In \autoref{sect_invariants_cycles}, we restricted ourselves to considering only the first sum transform $H_\mathscr{H}^1$ (i.e., the polar multiplicities of $G$) because then we could use many desirable properties of polar multiplicities  (see \cite{cidruiz2024polar}, \cite[\S 8]{KLEIMAN_THORUP_GEOM}).

	We now discuss when the polar schemes specialize module an element $t \in \mm$ that is part of a system of parameters of $R$ (i.e., $\dim(R/tR) =d-1$).
	We say that the $i$-th polar scheme $P_i(I, X)$ \emph{specializes} modulo the element $t \in \mm$ if $\dim\left(P_i(I, X) \cap V(t)\right) \le d-i-1$ and we have the equality of cycles
	$$
	\big[P_i(I, X) \cap V(t)\big]_{d-i-1} \;=\; \big[P_i\left(I, X \cap V(t)\right)\big]_{d-i-1} \;\in\;  Z_{d-i-1}(X).
	$$
	We shall see that the question of whether polar schemes specialize is governed by the Segre numbers $c_i(I, R)$ and $c_i(I, R/tR)$.
	First, we have the following basic but useful observation. 
	
	\begin{lemma}
		\label{lem_sp_polar}
		Assume \autoref{setup_segre_cycles}.
		Let $t \in \mm$ be such that $\dim(R/tR) = d-1$.
		If\, $\HT\left(I, (g_1,\ldots,g_i):_RI^\infty, t\right) \ge i +2$, then $P_i(I, X)$ specializes modulo $t$.
	\end{lemma}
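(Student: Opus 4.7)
By \autoref{thm_polar_segre_cycles}(i), I would begin by rewriting both polar schemes as explicit spectra: $P_i(I,X) = \Spec(R/J)$ with $J := (g_1,\ldots,g_i) :_R I^\infty$, and, via the closed embedding $\Spec(R/tR) \hookrightarrow X$, $P_i(I, X \cap V(t)) = \Spec(R/J')$ with $J' := (g_1,\ldots,g_i,t) :_R I^\infty$. Writing $\aaa_i := (g_1,\ldots,g_i)$, the obvious inclusion $J + tR \subseteq J'$ (since $\aaa_i \subseteq J$ and $t \in J'$) yields a surjection $R/(J+tR) \twoheadrightarrow R/J'$. Establishing that $P_i(I,X)$ specializes modulo $t$ then amounts to proving (a) the dimension bound $\dim(R/(J+tR)) \le d-i-1$ and (b) the cycle equality $[R/(J+tR)]_{d-i-1} = [R/J']_{d-i-1}$ in $Z_{d-i-1}(X)$.

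The central observation driving both parts is that the hypothesis pins down primes lying over $I$: any $\pp \supseteq I + J + tR$ satisfies $\HT(\pp) \ge i+2$, whence $\dim(R/\pp) \le d-i-2$. Consequently, every prime that can contribute to either cycle in dimension $d-i-1$ must lie outside $V(I)$, and one may freely localize at such primes using $IR_\pp = R_\pp$.

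I would handle (b) first, as it is cleaner. For any prime $\pp \not\supseteq I$ of dimension $d-i-1$, localization kills the $I$-saturation: $JR_\pp = \aaa_i R_\pp$ and $J'R_\pp = \aaa_i R_\pp + tR_\pp$, hence $(J+tR)R_\pp = J'R_\pp$ and the two lengths agree. The supports of the two cycles in dimension $d-i-1$ also coincide: each equals the set of primes $\pp \not\supseteq I$ of dimension $d-i-1$ containing $\aaa_i + tR$. Together these establish (b).

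For (a), let $\pp$ be a minimal prime of $J+tR$. If $\pp \supseteq I$ we are done by the central observation. Otherwise $\pp$ contains a minimal prime $\qqq$ of $J$, and \autoref{rem_filter_reg_seq} yields $\HT(\qqq) = i$ with $\qqq \not\supseteq I$. When $t \notin \qqq$, Krull's principal ideal theorem applied in $R/\qqq$ gives $\HT(\pp/\qqq) = 1$, and the basic inequality $\dim(R/\pp) \le \dim(R/\qqq) - 1 \le d-i-1$ closes the argument. The delicate case $t \in \qqq$—in which one could a priori have $\pp = \qqq$ with $\dim(R/\qqq)$ as large as $d-i$—is the main technical obstacle; my plan is to exploit the hypothesis applied to primes minimal over $\qqq + I$, combined with the filter-regularity supplied by \autoref{rem_filter_reg_seq}, to exclude such a $\qqq$.
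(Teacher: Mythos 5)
Your split into a dimension bound (a) and a cycle equality (b) is sensible, and your localization argument for (b) is correct. But your proof of (a) has a genuine gap in the case $t \in \qqq$, and the proposed ``plan'' to exclude such a $\qqq$ cannot work as stated: a minimal prime $\qqq$ of $J := (g_1,\ldots,g_i):_RI^\infty$ with $\qqq \not\supseteq I$ and $t \in \qqq$ is perfectly possible, and the hypothesis $\HT(I, J, t) \ge i+2$ imposes nothing on primes that avoid $I$. The missing ingredient is the bound $\dim(R/J') \le d-i-1$, which is \autoref{rem_filter_reg_seq} applied in the ring $R/tR$; this is legitimate because $g_1,\ldots,g_i$, being sequentially general in $I$, may simultaneously be taken filter-regular with respect to $I(R/tR)$. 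Once you have this bound, your own localization observation from (b) finishes (a) in one stroke: if $\pp$ is minimal over $J+tR$ and $\pp \not\supseteq I$, then $(J+tR)R_\pp = (g_1,\ldots,g_i,t)R_\pp = J'R_\pp$, hence $\pp \supseteq J'$ and $\dim(R/\pp) \le \dim(R/J') \le d-i-1$. The Krull-theorem case analysis, including the stuck $t\in\qqq$ case, then becomes unnecessary.

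For comparison, the paper's short proof bypasses the prime-by-prime analysis entirely: setting $\overline{R} := R/(J+tR)$, it notes that $I^k\overline{R}\cdot J'\overline{R} = 0$ for some $k$, so in the exact sequence $0 \to J'/(J+tR) \to \overline{R} \to R/J' \to 0$ the kernel is supported on $V(I+J+tR)$, of dimension at most $d-i-2$ by hypothesis; combined with $\dim(R/J') \le d-i-1$ from \autoref{thm_polar_segre_cycles}(i), this yields the dimension bound and the cycle equality simultaneously. Both arguments hinge on the same two facts — the $I$-saturation is invisible away from $V(I)$, and the hypothesis controls $\dim V(I+J+tR)$ — but the paper's global exact-sequence argument is considerably cleaner than the prime-by-prime route.
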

	\begin{proof}
		Let $\overline{R} := R/\left((g_1,\ldots,g_i):_RI^\infty, t\right)$ and notice that $I^k\overline{R} \cdot \left((t, g_1,\ldots,g_i):_RI^\infty\right)\overline{R} =0$ for some $k > 0$.
		Thus the result follows as a direct consequence of \autoref{thm_polar_segre_cycles}(i).
	\end{proof}

	A very important technical result is the following theorem.
	Our arguments below follow closely the ones in the proof of \cite[Theorem 3.3]{PTUV}.

	\begin{theorem}
		\label{thm_specialization_polar}
		Assume \autoref{setup_segre_cycles} and that $R$ is equidimensional and catenary.
		Let $1 \le i \le d-1$.
		Let $t \in \mm$ be such that $\HT\left((g_1,\ldots,g_{i-1}):_RI^\infty, t\right) \ge i$ and $\HT\left(I, (g_1,\ldots,g_{i-1}):_RI^\infty, t\right) \ge i +1$.
		Then the following statements hold: 
		\begin{enumerate}[\rm (i)]
			\item $\HT\left((g_1,\ldots,g_{i}):_RI^\infty, t\right) \ge i+1$.
			\item $c_i(I, R/tR) \ge c_i(I, R)$.
			\item If $c_i(I, R/tR) = c_i(I, R)$, then $\HT\left(I, (g_1,\ldots,g_i):_RI^\infty, t\right) \ge i +2$.
		\end{enumerate}
	\end{theorem}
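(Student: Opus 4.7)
The plan is to prove $(i)$ by a genericity argument for $g_i$, and to derive $(ii)$ and $(iii)$ simultaneously from a local-to-global comparison of the length formulas for Segre numbers in \autoref{rem_formulas}(iv). To streamline notation, set
\[
\aaa := (g_1,\ldots,g_{i-1}):_RI^\infty, \qquad \aaa' := (g_1,\ldots,g_i):_RI^\infty, \qquad \bb := (g_1,\ldots,g_{i-1},t):_RI^\infty,
\]
which satisfy $\aaa + (g_i) \subset \aaa'$, $\aaa + tR \subset \bb$, and $\bb = (\aaa + tR):_R I^\infty$.

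For $(i)$, I first exploit the genericity of $g_i$ to arrange that $g_i$ avoids every minimal prime of $\aaa + tR$ that does not contain $I$. Let $\pp$ be minimal over $\aaa' + tR$. If $\pp \supset I$, then the second hypothesis immediately yields $\HT(\pp) \ge i+1$. Otherwise $\pp \not\supset I$, and since $g_i \in \aaa' \subset \pp$, the choice of $g_i$ forces $\pp$ not to be minimal over $\aaa + tR$; combined with the first hypothesis and catenarity of $R$, this gives $\HT(\pp) \ge i+1$.

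For $(ii)$, \autoref{rem_formulas}(iv) expresses $c_i(I, R) = \sum_\pp \length_{R_\pp}(R_\pp/(\aaa R_\pp + g_iR_\pp)) \cdot e(R/\pp)$, summed over primes $\pp \supset \aaa + I$ of height $i$, and the second hypothesis forces $t \notin \pp$ for each such $\pp$. Similarly, $c_i(I, R/tR) = \sum_\qqq \length_{R_\qqq}(R_\qqq/(\bb R_\qqq + g_iR_\qqq)) \cdot e(R/\qqq)$ over primes $\qqq \supset \bb + I$ of height $i+1$. By catenarity, each minimal prime $\qqq$ of $\pp + tR$ has $\HT(\qqq) = i+1$, and one verifies (using that $\pp$ dominates a minimal prime of $\aaa$ of height $i-1$ not containing $I$, on which $t$ is a nonzerodivisor by the first hypothesis) that such $\qqq$ lies over $\bb$ via the saturation identity. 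The technical heart of the argument is the local inequality
\begin{equation*}
\length_{R_\pp}\!\left(R_\pp/(\aaa R_\pp + g_iR_\pp)\right) e(R/\pp) \;\le\; \sum_{\qqq\,\text{min over}\,\pp + tR} \length_{R_\qqq}\!\left(R_\qqq/(\bb R_\qqq + g_iR_\qqq)\right) e(R/\qqq),
\end{equation*}
which I would derive by combining the hyperplane-section inequality $e(R/\pp) \le e(\mm, R/(\pp+tR))$, the associativity formula for Hilbert-Samuel multiplicities applied to $R/(\pp+tR)$, and the length comparison $\length_{R_\pp}(R_\pp/(\aaa R_\pp + g_iR_\pp)) \cdot \length_{R_\qqq}(R_\qqq/(\pp+tR)R_\qqq) \le \length_{R_\qqq}(R_\qqq/(\bb R_\qqq + g_iR_\qqq))$, where the last inequality uses the identity $\bb = (\aaa + tR):_RI^\infty$ to absorb $I$-primary embedded contributions.

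Summing the local inequality over $\pp$ yields $(ii)$, using that distinct $\pp$ contribute through disjoint sets of $\qqq$: the unique $\pp$ associated to $\qqq$ is characterized as the height-$i$ prime contained in $\qqq$ that contains $\aaa + I$ and avoids $t$. For $(iii)$, equality $c_i(I, R) = c_i(I, R/tR)$ forces equality in every local inequality and, more importantly, forces every contributing $\qqq$ to arise from some contributing $\pp$ via $\qqq \supset \pp + tR$; a minimal prime $\pp^\star$ of $\aaa' + I + tR$ of height exactly $i+1$ would otherwise give an extraneous contribution to $c_i(I, R/tR)$, contradicting the equality. Ruling out such $\pp^\star$ is precisely the statement $\HT(I, \aaa', t) \ge i+2$. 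The main obstacle is the rigorous verification of the local inequality and of the bijective correspondence between contributing $\pp$'s and $\qqq$'s, both of which rest on a careful interplay between the associativity formula and the $I^\infty$-saturation operation, paralleling the analytic argument of \cite[Theorem 3.3]{PTUV}.
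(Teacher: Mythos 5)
Your proof of part (i) is essentially sound and parallels the paper's argument (the paper reduces to $i=1$ first and then uses prime avoidance and the order inequality $\HT((g):_RI^\infty, t) \ge \HT(g, t)$; your version keeps general $i$ but uses the same idea). The real issue is in your treatment of (ii) and (iii).

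The paper also reduces immediately to $i=1$ by replacing $R$ with $R/(g_1,\ldots,g_{i-1}):_RI^\infty$, and then proves the key inequality $c_1(I,R/tR)\ge c_1(I,R)$ via a long chain of multiplicity inequalities: it writes $c_1(I,R/tR)=\sum_{\pp\in\Lambda}e(I,R_\pp/tR_\pp)e(R/\pp)$, replaces $e(I,R_\pp/tR_\pp)$ by $e((g),R_\pp/tR_\pp)$ (genericity), then applies \cite[Lemma 3.1]{PTUV} to get $e((g),R_\pp/tR_\pp)\ge e((t),R_\pp/gR_\pp)$, then the associativity formula, then a counting argument $\Gamma_\pp\subseteq\Sigma_\pp$, then \cite[Lemma 3.2]{PTUV}. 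The crucial technical input is Lemma 3.1 of \cite{PTUV}, the two-element multiplicity symmetry inequality $e((g),R_\pp/tR_\pp)\ge e((t),R_\pp/gR_\pp)$ for $g$ a nonzerodivisor, and your sketch has no analogue of it.

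Your proposed replacement — the local length inequality
\begin{equation*}
\length_{R_\pp}\!\left(R_\pp/(\aaa R_\pp + g_iR_\pp)\right)\cdot\length_{R_\qqq}\!\left(R_\qqq/(\pp+tR)R_\qqq\right)\;\le\;\length_{R_\qqq}\!\left(R_\qqq/(\bb R_\qqq + g_iR_\qqq)\right)
\end{equation*}
— has a direction problem. Since $\qqq\supset I$ and $\bb=(\aaa+tR):_RI^\infty$, after localizing at $\qqq$ one has $\bb R_\qqq\supseteq(\aaa+tR)R_\qqq$, so $R_\qqq/(\bb+g_i)R_\qqq$ is a \emph{quotient} of $R_\qqq/(\aaa+tR+g_i)R_\qqq$ and the saturation makes the right-hand side \emph{smaller}, not larger. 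Thus the phrase ``uses the identity $\bb=(\aaa+tR):_RI^\infty$ to absorb $I$-primary embedded contributions'' points in the wrong direction, and the proposed local inequality is not only unverified but likely false as stated. Relatedly, you assert a bijective correspondence between the contributing $\pp$'s and $\qqq$'s (``the unique $\pp$ associated to $\qqq$''); the paper never claims such a bijection — it sums over all incidences $\qqq\subset\pp$ and uses a double-sum exchange together with the containment $\Gamma_\pp\subseteq\Sigma_\pp$, which also carries the information needed for part (iii). To fix the argument you should follow the paper's reduction to $i=1$ and invoke the multiplicity symmetry lemma rather than trying to compare lengths across two different localizations directly.
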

	\begin{proof}
		We may assume that $\dim(R/(g_1,\ldots,g_{i-1}):_RI^\infty)=d-i+1$ because $R$ is equidimensional and catenary (see \autoref{rem_filter_reg_seq}).
		By utilizing \autoref{thm_polar_segre_cycles}(ii), we may assume $i=1$ and substitute $R$ by $R/(g_1,\ldots,g_{i-1}):_RI^\infty$.
		So, we are free to assume the conditions $i = 1$, $\dim(R/tR) = d-1$, $0:_RI^\infty=0$ and $\HT(I, t) \ge 2$.
		Hence $g_1$ is a nonzerodivisor since it is a general element of $I$.
		Set $g := g_1$.
		Since $g$ is a general element of $I$, it follows that $\HT((g):_RI^\infty, t) \ge \HT(g, t) \ge 2$; thus settling part (i).
		We need to verify $c_1(I, R/tR) \ge c_1(I, R)$ to prove part (ii).
		Part (iii) follows by showing that if $c_1(I, R/tR)=c_1(I, R)$ then $\HT(I, (g):_RI^\infty, t) \ge 3$.
		
		The rest of the proof follows verbatim the arguments in \cite[page 962, proof of Theorem 3.3]{PTUV}.
		Consider the finite sets of primes $\Lambda := \lbrace \pp \in V(I, t) \mid \HT(\pp) = 2 \rbrace$ and $\Gamma := \lbrace \qqq \in V(I) \mid \HT(\qqq) = 1 \rbrace$.
		Thus, to prove the inequality $\HT(I, (g):_RI^\infty, t) \ge 3$, it suffices to show that for each $\pp \in \Lambda$ we have $\pp \not\supset (g):_RI^\infty$ or, equivalently, $IR_\pp \subset \sqrt{gR_\pp}$.
		For each $\pp \in \Lambda$, we define the finite sets
		$$
		\Sigma_\pp := \{\qqq \in {\rm Min}(g) \mid \qqq \subset \pp \} \quad \text{ and } \quad \Gamma_\pp := \lbrace \qqq \in V(I) \mid \qqq \subset \pp \text{ and } \HT(\qqq)=1 \rbrace.
		$$
		Notice that $\Gamma_\pp \subseteq \Sigma_\pp$ and that an equality holds if and only if $IR_\pp \subset \sqrt{gR_\pp}$.
		By utilizing the faithfully flat extension $R \rightarrow R[y]_{\mm R[y]}$, we may assume that the residue field of $R$ is not an algebraic extension of a finite field, and thus we may assume that $g$ is a general element of $IR_\pp$ for each $\pp \in \Lambda$ (see \cite[Lemma 2.6]{PTUV}).
		
		We have the following
		{\footnotesize
			\begin{alignat*}{3}
				c_1(I, R/tR) &= \sum_{\pp \in \Lambda} e\left(I, R_\pp/tR_\pp\right)\cdot e(R/\pp) &\text{by \cite[Proposition 2.4(a)]{PTUV} or \autoref{thm_polar_segre_cycles}(ii)}\\
				&= \sum_{\pp \in \Lambda} e\left((g), R_\pp/tR_\pp\right)\cdot e(R/\pp) &\text{since $g$ is general in $IR_\pp$} \\
				&\ge \sum_{\pp \in \Lambda} e\left((t), R_\pp/gR_\pp\right)\cdot e(R/\pp) &\text{by \cite[Lemma 3.1]{PTUV} since $g$ is a nonzerodivisor}\\
				&= \sum_{\pp \in \Lambda} \sum_{\qqq \in \Sigma_\pp} \length_{R_\qqq}(R_\qqq/gR_\qqq)\cdot e\left((t), R_\pp/\qqq R_\pp\right)\cdot e(R/\pp) & \text{by the associativity formula}\\
				&\ge \sum_{\pp \in \Lambda} \sum_{\qqq \in \Gamma_\pp} \length_{R_\qqq}(R_\qqq/gR_\qqq)\cdot e\left((t), R_\pp/\qqq R_\pp\right)\cdot e(R/\pp) & \text{since $\Gamma_\pp \subseteq \Sigma_\pp$}\\
				&= \sum_{\qqq \in \Gamma} \length_{R_\qqq}(R_\qqq/gR_\qqq) \sum_{\pp \in \Lambda, \pp \supset \qqq}  e\left((t), R_\pp/\qqq R_\pp\right)\cdot e(R/\pp) & \text{by switching the summation} \\
				&\ge \sum_{\qqq \in \Gamma} \length_{R_\qqq}(R_\qqq/gR_\qqq) \cdot e(R/\qqq) & \text{by \cite[Lemma 3.2]{PTUV}} \\
				&\ge \sum_{\qqq \in \Gamma} e((g), R_\qqq) \cdot e(R/\qqq) &\text{by \cite[Corollary 1.2.12]{FLENNER_O_CARROLL_VOGEL}}\\
				&\ge \sum_{\qqq \in \Gamma} e(IR_\qqq, R_\qqq) \cdot e(R/\qqq) & \text{since $g \in I$}\\
				&=c_1(I, R) &\text{by \cite[Proposition 2.4(a)]{PTUV} or \autoref{thm_polar_segre_cycles}(ii).}
			\end{alignat*}
		}
		Therefore, we obtain the inequality $c_1(I, R/tR) \ge c_1(I, R)$ and that $\Gamma_\pp = \Sigma_\pp$ if an equality holds.
		This concludes the proof of the theorem.
	\end{proof}

	Important consequences of \autoref{thm_polar_segre_cycles} and \autoref{thm_specialization_polar} are the following lexicographical upper-semicontinuity results.
	
	\begin{corollary}
		\label{cor_lex_order}
		Assume that $R$ is equidimensional and catenary, and let $t \in \mm$ be such that $\dim(R/tR) = d-1$.
		Suppose that $\HT(I, 0:_RI^\infty, t) \ge 2$.
		Then we get the inequality 
		$$
		\big(c_1(I, R/tR), c_2(I, R/tR),\ldots, c_{d-1}(I, R/tR)\big) \;\;\ge_{\rm lex}\;\; \big(c_1(I, R), c_2(I, R),\ldots, c_{d-1}(I, R)\big)
		$$
		under the lexicographical order.
	\end{corollary}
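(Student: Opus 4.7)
The plan is to apply \autoref{thm_specialization_polar} iteratively, using its three parts as a self-propagating package that advances through the coordinates one at a time. First, I fix a sequence $\underline{g} = g_1,\ldots,g_{d-1}$ of elements in $I$ that is sequentially general simultaneously for $I \subset R$ and for $I(R/tR) \subset R/tR$; since the ``general'' condition defines a dense Zariski-open subset of the coefficient space, such a simultaneous choice exists, and by \autoref{thm_polar_segre_cycles}(ii) this single sequence computes both $c_i(I, R)$ and $c_i(I, R/tR)$ for all $1 \le i \le d-1$.

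Next, I induct on $i$, proving the following dichotomy: either $c_j(I, R/tR) > c_j(I, R)$ for some $j \le i$, or $c_j(I, R/tR) = c_j(I, R)$ for every $j \le i$ and the two height conditions
$$
\HT\left((g_1,\ldots,g_i):_RI^\infty,\, t\right) \;\ge\; i+1 \quad \text{and} \quad \HT\left(I,\, (g_1,\ldots,g_i):_RI^\infty,\, t\right) \;\ge\; i+2
$$
hold. The base case $i = 0$ reduces to $\HT(0:_RI^\infty, t) \ge 1$ and $\HT(I, 0:_RI^\infty, t) \ge 2$: the second is exactly the standing hypothesis, while the first follows because equidimensionality of $R$ together with $\dim(R/tR) = d-1$ forces $t$ to lie outside every minimal prime of $R$, so $(0:_RI^\infty, t) \supseteq (t)$ has height at least $1$. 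For the inductive step, the height conditions from level $i-1$ are precisely the hypotheses required to invoke \autoref{thm_specialization_polar} at level $i$; part (ii) yields $c_i(I, R/tR) \ge c_i(I, R)$, and if equality holds, parts (i) and (iii) of the same theorem promote the height bounds to level $i$, closing the induction.

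Running $i$ from $1$ to $d-1$ either locates the first strict inequality, yielding the lex inequality with strict increase at that coordinate (all earlier coordinates equal by the inductive hypothesis), or concludes that all coordinates coincide, in which case the lex inequality holds as equality. The main obstacle is essentially bookkeeping: one must recognize that \autoref{thm_specialization_polar} has been deliberately formulated so that its conclusions (i) and (iii) reproduce precisely the hypotheses needed at the next iteration, and that the base case is unlocked by the standing assumption together with equidimensionality. Once the simultaneous general sequence is in hand, the argument reduces to reading off this self-propagation.
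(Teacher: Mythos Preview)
Your proof is correct and is exactly the argument the paper has in mind: the corollary is stated without proof because it is the evident induction on \autoref{thm_specialization_polar}, whose parts (i) and (iii) were engineered precisely to feed the hypotheses back in at the next level, with the base case $i=0$ supplied by the standing assumption $\HT(I,0:_RI^\infty,t)\ge 2$ together with the observation that $t$ avoids all minimal primes of $R$.

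Two minor remarks. First, the extra requirement that $\underline{g}$ be sequentially general for $I(R/tR)\subset R/tR$ is harmless but unnecessary: the quantities $c_i(I,R/tR)$ appearing in \autoref{thm_specialization_polar}(ii) are intrinsic polar multiplicities of $\gr_I(R/tR)$ and do not depend on any choice of sequence, while the height conditions in (i) and (iii) only involve the sequence chosen in $R$. Second, you implicitly assume $\kappa$ is infinite when you fix $\underline{g}$; since the corollary does not assume this, a one-line reduction via the faithfully flat extension $R\to R[y]_{\mm R[y]}$ (which preserves all the hypotheses and the Segre numbers) should precede the choice of sequence.
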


	\begin{corollary}
		\label{cor_lex_upper_two_ideals}
		Assume that $R$ is equidimensional and universally catenary.
		Let $I \subset J$ be two $R$-ideals.
		Then we get the inequality 
		$$
		\big(c_0(I, R), c_1(I, R),\ldots, c_d(I, R)\big) \;\;\ge_{\rm lex}\;\; \big(c_0(J, R), c_1(J, R),\ldots, c_d(J, R)\big)
		$$
		under the lexicographical order.
	\end{corollary}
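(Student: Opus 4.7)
The strategy is to deduce the corollary from \autoref{cor_lex_order} (and for the $c_0$ slot, direct considerations) by constructing a one-parameter deformation that realizes $J$ as a generic fiber and $I$ as a special fiber. The lex inequality is transitive, so by induction on the number of generators of $J/I$ it suffices to treat the case $J = I + (g)$ for a single element $g \in J$. I will then form the Noetherian local ring $\widetilde{R} := R[t]_{(\mm R[t] + (t))}$ of dimension $d+1$ (which inherits equidimensionality and universal catenarity from $R$) and introduce the ideal $\widetilde{I} := I\widetilde{R} + tg\widetilde{R} \subset \widetilde{R}$. The parameter $t$ satisfies $\dim(\widetilde{R}/t\widetilde{R}) = d$, and modulo $t$ we recover $(\widetilde{R}/t\widetilde{R}, \widetilde{I}/(t\widetilde{R}\cap\widetilde{I})) \cong (R, I)$. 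After inverting $t$, the ring $\widetilde{R}[1/t]$ becomes a faithfully flat extension of $R$ whose residue field is the purely transcendental extension $\kappa(t)$ of $\kappa$, and in this localization $\widetilde{I}$ becomes $J\widetilde{R}[1/t]$ (since $tg$ generates the same ideal as $g$ when $t$ is a unit and $I \subset J$).

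Next, I plan to apply \autoref{cor_lex_order} to the triple $(\widetilde{R}, \widetilde{I}, t)$. The height hypothesis $\HT(\widetilde{I}, 0 :_{\widetilde{R}} \widetilde{I}^\infty, t) \geq 2$ holds after a preliminary reduction that ensures $\HT(I) > 0$ (this reduction is handled by passing to $R/(0 :_R I^\infty)$, which does not affect Segre numbers of positive codimension), because then $\HT(\widetilde{I}) \geq 1$ and $t$ is transcendental over $R$ and thus not in the radical of any ideal generated by elements of $R$. The corollary then yields
$$
\big(c_1(I, R), \ldots, c_d(I, R)\big) \;\geq_{\mathrm{lex}}\; \big(c_1(\widetilde{I}, \widetilde{R}), \ldots, c_d(\widetilde{I}, \widetilde{R})\big).
$$
The final ingredient is the identification $c_i(\widetilde{I}, \widetilde{R}) = c_i(J, R)$ for $1 \leq i \leq d$. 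This is obtained by combining the length/multiplicity formula from \autoref{thm_polar_segre_cycles}(ii) (applied in the ring $\widetilde{R}$) with the observation that, after inverting $t$, the ideal $\widetilde{I}$ coincides with $J\widetilde{R}[1/t]$, together with the standard permanence of Segre numbers under faithfully flat extensions that only enlarge the residue field by a purely transcendental extension.

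For the remaining slot $c_0$, I will argue separately, since \autoref{cor_lex_order} does not govern it. If $\HT(I) > 0$, then $\HT(J) > 0$ and $c_0(I, R) = c_0(J, R) = 0$, so the lex comparison reduces to the previous step. If $\HT(I) = 0$ but $\HT(J) > 0$, then $c_0(I, R) > 0 = c_0(J, R)$, and the lex inequality holds already in the first slot. The remaining case $\HT(I) = \HT(J) = 0$ will be handled by inspecting the contributions from the minimal primes of $R$ using the Achilles--Manaresi interpretation of $c_0$ as the leading coefficient (in $n^d$) of the second sum transform $H^2_{\mathscr{H}(I)}(0, n) = \length_R\bigl(R/(\mm^{n+1} + I\mm^n + \cdots)\bigr)$; here $I \subset J$ forces a term-by-term inequality on the relevant length functions.

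\textbf{Main obstacle.} The most delicate step is the identification $c_i(\widetilde{I}, \widetilde{R}) = c_i(J, R)$. The subtlety is that $\widetilde{I}$ is not the extension of a single ideal from $R$: it mixes the $t$-free contribution $I\widetilde{R}$ with the $t$-twisted contribution $tg\widetilde{R}$. Making the permanence argument precise requires a careful analysis of the bigraded Hilbert polynomial of $\gr_{\mm_{\widetilde{R}}}\bigl(\gr_{\widetilde{I}}(\widetilde{R})\bigr)$, showing that its leading coefficients in the relevant $v^i n^{d-i}$ monomials agree with those of $\gr_\mm\bigl(\gr_J(R)\bigr)$---with the extra degree of freedom contributed by $t$ absorbed by the residue-field extension $\kappa \subset \kappa(t)$. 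The treatment of $c_0$ in the height-zero case is a secondary technical point that does not fit within \autoref{cor_lex_order} and must be argued directly.
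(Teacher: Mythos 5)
Your overall strategy — interpolate between $I$ and $J$ by a one-parameter family via an auxiliary variable $t$, apply \autoref{cor_lex_order} to the specialization $t=0$, and compare with $J$ after inverting $t$ — is the same as the paper's, but two of the steps you identify as "ingredients" hide gaps that the paper's proof is engineered to avoid.

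\textbf{The claimed equality $c_i(\widetilde{I},\widetilde{R})=c_i(J,R)$ is not what you need and is not justified.} You yourself flag this as the main obstacle. The trouble is that $\widetilde{R}$ is local of dimension $d+1$, so $c_i(\widetilde{I},\widetilde{R})$ is computed with respect to the closed point $(\mm,t)$, whereas the "inverting $t$" picture only tells you about $\widetilde{I}$ over the \emph{open} locus $t\neq 0$, which after localizing at $\mm\widetilde{R}$ recovers the $d$-dimensional local ring $R[t]_{\mm R[t]}$. Passing from $\widetilde{R}$ to this localization typically \emph{loses} Segre number contributions (components of the Segre cycles sitting at primes not contained in $\mm\widetilde{R}$), so the best you can expect is an inequality, not an equality. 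The paper's proof uses exactly this: after applying \autoref{cor_lex_order} to get $(c_1(I,R),\dots)\ge_{\rm lex}(c_1(H,S),\dots)$ with $S=R[t]_{(\mm,t)}$ and $H=IS+tJS$, it invokes the localization inequality $c_i(H,S)\ge c_i(H,S_{\mm S})$ (Proposition 2.7 of PTUV) and then the genuine flat-base-change equality $c_i(H,S_{\mm S})=c_i(J,R)$ (since $S_{\mm S}=R[t]_{\mm R[t]}$ and $HS_{\mm S}=JR[t]_{\mm R[t]}$). Without the intermediate localization step your chain is incomplete; you need $c_i(\widetilde{I},\widetilde{R})\ge c_i(J,R)$, which follows from Prop.\ 2.7 plus flat base change, not from a direct identification.

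\textbf{The $c_0$ case $\HT(I)=\HT(J)=0$ is not closed.} Your appeal to a "term-by-term inequality on the relevant length functions" only gives $\length$-comparison for each fixed $(v,n)$; it does not give a comparison of leading coefficients of the Hilbert polynomial of $\gr_\mm(\gr_I(R))$, which is precisely the kind of pitfall the paper's Theorem 3.2 explicitly warns against with the example $(x-y)^2$. The paper avoids this entirely by first replacing $R$ by $R[y]_{(\mm,y)}$ and $(I,J)$ by $((I,y),(J,y))$, which forces $\HT(I)\ge 1$, $\HT(J)\ge 1$, hence $c_0(I,R)=c_0(J,R)=0$ by \cite{PTUV}*{Corollary 2.2(d)}, after which \autoref{cor_lex_order} applies to the remaining slots. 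If you adopt that normalization you can drop your case analysis on heights altogether. (Your induction to a single generator $g$ is harmless but unnecessary; the paper's $H=IS+tJS$ treats all generators at once.)
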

	\begin{proof}
		By replacing $R$ by $R[y]_{(\mm, y)}$ (where $y$ is a new variable) and the ideals $I$ and $J$ by $(I, y)$ and $(J, y)$, we may assume that $\HT(I)\ge 1$ and $\HT(J) \ge 1$ and that $c_0(I, R) = 0$ and $c_0(J, R)=0$; see \cite[Corollary 2.2(d)]{PTUV}.
		
		Consider the local ring $S=R[t]_{(\mm, t)}$ (where $t$ is a new variable).
		Let $H = IS + tJS \subset S$.
		Notice that $\HT(H, 0:_SH^\infty, t) \ge \HT(IS, t) \ge 2$, $H\cdot S/tS = I$ and $HS_{\mm S} = JR[t]_{\mm R[t]}$.
		Then we obtain the following inequalities 
		\begin{alignat*}{3}
			\big(c_1(I, R), c_2(I, R),\ldots, c_d(I, R)\big) &\;=\; \big(c_1(H, S/tS), c_2(H, S/tS),\ldots, c_d(H, S/tS)\big) &  \\
			&\;\ge_{\rm lex}\; \big(c_1(H, S), c_2(H, S/tS),\ldots, c_d(H, S)\big) & \text{ by \autoref{cor_lex_order}} \\ 
			&\;\ge\; \big(c_1(H, S_{\mm S}), c_2(H, S_{\mm S}),\ldots, c_d(H, S_{\mm S})\big) & \text{ by \cite[Proposition 2.7]{PTUV}} \\
			&\;=\; \big(c_1(J, R), c_2(J, R),\ldots, c_d(J, R)\big) &
		\end{alignat*}
		that settle the claim of the corollary.
	\end{proof}
	
	We now present a principle of specialization of integral dependence that we enunciate similarly to the ones in \cite[Theorem 4.7]{GG} and \cite[Theorem 4.4]{PTUV}.
	
	\begin{theorem}[Principle of Specialization of Integral Dependence -- PSID]
		\label{thm_psid}
		Assume \autoref{setup_polar_Segre} and that $R$ is equidimensional and universally catenary. 
		Let $t \in \mm$ be such that $\dim(R/tR) = d-1$.
		Suppose that $\HT(I, t) \ge 2$ and that $c_i(I, R/tR) = c_i(I, R)$ for all $1\le i \le d-1$.
		Then, for any element $h \in R$, the following two conditions are equivalent:
		\begin{enumerate}[\rm (i)]
			\item $h \in \overline{I}$.
			\item $hR_\pp \in \overline{IR_\pp}$ for all primes $\pp \in \Spec(R)$ such that $t \not\in \pp$.
		\end{enumerate}
	\end{theorem}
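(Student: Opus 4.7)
The direction (i) $\Rightarrow$ (ii) is immediate since integral closure commutes with localization. For the converse, set $J := I + (h)$; then $I \subseteq J$, and the goal becomes $\overline{J} = \overline{I}$. I would invoke the main result of \cite{PTUV}---the fact that Segre numbers detect integral dependence---to reduce this to verifying $c_i(IR_\pp, R_\pp) = c_i(JR_\pp, R_\pp)$ for every prime $\pp$ and every index $i$. At primes $\pp$ with $t \notin \pp$, hypothesis (ii) immediately yields $JR_\pp \subseteq \overline{IR_\pp}$ and the equality is automatic; so the content lies at primes containing $t$. After localizing at such a prime and noting that equidimensionality and universal catenarity descend to $R_\pp$ (the former because in an equidimensional universally catenary local ring $R_\pp$ is equidimensional of dimension $d - \dim(R/\pp)$), we reduce to proving $c_i(I, R) = c_i(J, R)$ for every $0 \le i \le d$.

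The strategy is induction on $i$. The base case $i=0$ follows from $\HT(I) \ge 1$. For the inductive step, assume $c_j(I, R) = c_j(J, R)$ for $j < i$. Applying \autoref{cor_lex_upper_two_ideals} to $I \subseteq J$ and using the inductive hypothesis converts the lex-inequality to the pointwise bound $c_i(I, R) \ge c_i(J, R)$. For the reverse bound I use the length formula in \autoref{rem_formulas}(iv): fixing a general sequence $g_1, \ldots, g_{d-1}$ in $I$ as in \autoref{setup_segre_cycles}, the hypothesis $c_{i-1}(I, R/tR) = c_{i-1}(I, R)$ combined with \autoref{thm_specialization_polar}(iii) yields
\[
\HT\bigl(I,\; (g_1, \ldots, g_{i-1}) :_R I^\infty,\; t\bigr) \;\ge\; i+1,
\]
so every prime $\pp$ of dimension $d-i$ contributing to the sum for $c_i(I, R)$ must satisfy $t \notin \pp$. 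At each such $\pp$, hypothesis (ii) delivers $\overline{IR_\pp} = \overline{JR_\pp}$, hence equal multiplicities. Taking the same sequence $g_1, \ldots, g_{i-1} \in I \subseteq J$ to serve as a sequence of general elements for $J$ makes the two length formulas match term-by-term, closing the step. The top case $i = d$ requires only that the single prime $\mm$ be handled, which uses the strong height conclusion $(g_1, \ldots, g_{d-1}) :_R I^\infty + I + (t) = R$ from the hypothesis at $d-1$: this forces $t$ to act as a unit on the one-dimensional quotient where the remaining multiplicities live, reducing the computation to an application of (ii).

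The main obstacle I anticipate is justifying the compatibility of general elements across $I$ and $J$---that is, exhibiting a single sequence $g_1, \ldots, g_{i-1}$ which serves as sequentially general for both ideals in their respective polar-scheme constructions. The key observation is that at every relevant prime $\pp$ (which, by the height conclusion, automatically avoids $t$), the ideals $IR_\pp$ and $JR_\pp$ share reductions and therefore the same notion of general filter-regular sequence; a prime-avoidance argument over the finitely many such primes then produces a global sequence with the required generality on both sides. Once this compatibility is established, the summand-by-summand matching of the length formulas of \autoref{rem_formulas}(iv) yields $c_i(I, R) = c_i(J, R)$ and completes the proof via the PTUV criterion.
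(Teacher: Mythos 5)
Your proof takes a genuinely different route from the paper's. The paper argues directly: it establishes $\dim(G/tG) < d$ (where $G = \gr_I(R)$) by running the inductive height chain of \autoref{thm_specialization_polar} through all the Segre-number equalities to force $m_{d-1}^{d-1-i}(G/tG) = 0$ for all $i$, then invokes the equidimensionality of $G$ and McAdam's result that every associated prime of $\overline{I}$ lies in $L(I) = \{\pp \in V(I) : \dim R_\pp = \ell(IR_\pp)\}$ to conclude that no such prime contains $t$, giving the theorem immediately without ever introducing a second ideal. You instead set $J = I + (h)$ and aim to verify $c_i(I,R) = c_i(J,R)$ for all $i$ so that the PTUV criterion applies. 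The reduction and the easy inequality $c_i(I,R) \ge c_i(J,R)$ via \autoref{cor_lex_upper_two_ideals} are sound, but the converse inequality is where the argument breaks.

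The obstacle you flagged is not a technical inconvenience but a genuine gap, and the prime-avoidance remedy you sketch does not repair it. To get $c_i(I,R) \le c_i(J,R)$ by term-by-term matching you need the formula of \autoref{rem_formulas}(iv) to hold for $J$ \emph{with the same sequence} $g_1,\dots,g_{i-1}$ you used for $I$, which requires that sequence to be sequentially general in $J$. A sequence general in $I$ is not automatically general in $J$: the two generality conditions live on different parameter spaces (the coefficient space for generators of $I$ versus that of $J$), and there is no reason a priori that a dense open condition for $J$ pulls back nontrivially to the $I$-coefficient locus. If the $g_j$'s fail to be general in $J$, the length formula only \emph{overestimates} $c_i(J,R)$, and you recover $c_i(I,R) \ge c_i(J,R)$ --- the inequality you already have from lex, not the one you need. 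Worse, your height-chain argument controls only the primes contributing to $c_i(I,R)$; nothing in the hypotheses constrains the relevant primes for $c_i(J,R)$, which may well contain $t$, and at such primes hypothesis (ii) gives you no information about $\overline{IR_\pp}$ versus $\overline{JR_\pp}$. Your proposed fix --- choosing a sequence general at each of the finitely many relevant primes --- presupposes you already know those primes for $J$ avoid $t$, which is essentially the statement being proved. Compare with the paper's route, which sidesteps all of this by working only with $I$ and the structure of $G/tG$.
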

	\begin{proof}
		Assume momentarily that $\dim(G/tG)<d$.
		As $G$ is equidimensional of dimension $d$ (see \cite[Theorem 3.8]{RATLIFF_QUNMIXED_II}, \cite[Lemma 2.2]{SUV_MULT}), it follows that the contraction of a minimal prime of $G$ to $R$ does not contain $t$.
		This implies that no prime in the set $L(I):=\lbrace\pp \in V(I) \mid \dim(R_\pp) = \ell(IR_\pp) \rbrace$ contains $t$.
		Since every associated prime of $\overline{I}$ belongs to $L(I)$ (see \cite[3.9 and 4.1]{MCADAM}), the result of the theorem follows.
		Hence it remains to show $\dim(G/tG)<d$.
		
		Since $\HT(I) \ge 1$, it follows that $\dim(G/G_+) = \dim(R/I) \le d-1$ and so no minimal prime of $G$ contains $G_+$.
		Set $E' := \Proj(G/tG)$.
		Thus $\dim(E') < d-1$ if and only if $\dim(G/tG) < d$.
		From \cite[Proposition 2.10]{cidruiz2024polar}, we obtain $m_{d-1}^{d-1}(G/tG) = e_{d-1}(\HH^0(E', \OO_{E'}))$ and so it follows that $m_{d-1}^{d-1}(G/tG) = 0$ because $\Supp(\HH^0(E', \OO_{E'})) \subset V(I, t)$ and by assumption we have the inequality $\HT(I, t) \ge 2$.
		We now assume the notation and assumptions of \autoref{setup_segre_cycles}.
		From \cite[Proposition 2.10]{cidruiz2024polar} and \autoref{eq_specialization_E}, we get 
		$$
		m_{d-1}^{d-1-i}(G/tG) \;=\; m_{d-1-i}^{d-1-i}\left(G/(\ell_1,\ldots,\ell_i,t)G\right) \;=\; m_{d-1-i}^{d-1-i}(G_i/tG_i),
		$$
		where $G_i = \gr_{IR_{i}}(R_i)$ and $R_i = R / (g_1,\ldots,g_i)R$.
		Therefore, by utilizing \autoref{thm_polar_segre_cycles}(ii) and \autoref{thm_specialization_polar}, we can prove inductively that 
		$$
		m_{d-1}^{d-1-i}(G/tG) = 0 \quad \text{ for all } \quad 0 \le i \le d-1.
		$$
		Due to \cite[Lemma 2.3]{cidruiz2024polar}, we obtain $\dim(E') < d-1$, and so it follows that $\dim(G/tG) < d$.		
	\end{proof}

	The following theorem contains our new criteria for integral dependence in terms of the invariants $m_i(I, R)$ and $\nu_i(I, R)$. 
	
	\begin{theorem}
		\label{thm_new_crit_int_dep}
		Assume \autoref{setup_polar_Segre} and that $R$ is equidimensional and universally catenary. 
		Let $I \subset J$ be two $R$-ideals.
		Suppose the following two conditions: 
		\begin{enumerate}[\rm (a)]
			\item $o(I) = o(J)$.
			\item $I$ satisfies the $\sG$-parameter condition generically {\rm(}see \autoref{nota_G_param}{\rm)}.
		\end{enumerate}
		Then the following are equivalent: 
		\begin{enumerate}[\rm (i)]
			\item $J$ is integral over $I$.
			\item $m_i(I, R) = m_i(J,R)$ for all $0 \le i \le d-1$.
			\item $\nu_i(I, R) = \nu_i(J, R)$ for all $0 \le i \le d$.
		\end{enumerate}
	\end{theorem}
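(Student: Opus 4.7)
The plan is to establish the cycle of implications by showing (i) $\Rightarrow$ (ii), (iii) directly, and then arguing that each of (ii) and (iii), in the presence of (a) and (b), forces the full multiplicity sequences of $I$ and $J$ to coincide, so that the Flenner--Manaresi/PTUV integral-dependence criterion yields (i); the remaining equivalences then follow by composition through (i). For (i) $\Rightarrow$ (ii), (iii) I would invoke the classical preservation of polar multiplicities and Segre numbers under integral dependence: if $J \subset \overline{I}$, then $\Rees(I) \hookrightarrow \Rees(J)$ and $\gr_I(R) \hookrightarrow \gr_J(R)$ are finite extensions of generic degree one, and additivity of polar multiplicities under push-forward along the induced finite morphisms of $\Proj$ gives $m_i(I, R) = m_i(J, R)$ and $c_i(I, R) = c_i(J, R)$, hence also $\nu_i(I, R) = \nu_i(J, R)$.

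For the reverse direction, I set $\delta := o(I) = o(J)$ by (a). By \autoref{prop_decomp_div}(iii), hypothesis (b) upgrades the general inequality $\delta \cdot m_{i-1}(I, R) \le \nu_i(I, R)$ to the equality $\nu_i(I, R) = \delta \cdot m_{i-1}(I, R)$ for all $1 \le i \le d$, while for $J$ only the inequality $\nu_i(J, R) \ge \delta \cdot m_{i-1}(J, R)$ is available. Assuming (ii), I would compute
$$
\nu_i(I, R) \;=\; \delta \cdot m_{i-1}(I, R) \;=\; \delta \cdot m_{i-1}(J, R) \;\le\; \nu_i(J, R),
$$
and canceling $m_i(I, R) = m_i(J, R)$ from $\nu_i = m_i + c_i$ (together with $m_d \equiv 0$) gives the componentwise bound $c_i(I, R) \le c_i(J, R)$ for $1 \le i \le d$. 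Assuming (iii) instead, $\nu_i(I, R) = \nu_i(J, R)$ combined with the $J$-inequality yields $m_{i-1}(J, R) \le m_{i-1}(I, R)$ for $1 \le i \le d$; substituting this into $c_i(I, R) - c_i(J, R) = m_i(J, R) - m_i(I, R)$ produces the same componentwise bound.

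The next step is to upgrade this componentwise bound to equality. Hypothesis (b), in its non-trivial instance, forces $\dim(P) = d$ and hence $\HT(I) \ge 1$, so $c_0(I, R) = c_0(J, R) = 0$. The lexicographic upper semicontinuity \autoref{cor_lex_upper_two_ideals} supplies $(c_0(I, R),\ldots,c_d(I, R)) \ge_{\rm lex} (c_0(J, R),\ldots,c_d(J, R))$, and a short induction on $i$ combining this lex order with the pointwise upper bound forces $c_i(I, R) = c_i(J, R)$ for every $0 \le i \le d$. Applying the \cite{PTUV} criterion that coincidence of multiplicity sequences characterizes integral dependence then yields $J \subseteq \overline{I}$, which is (i). Both (ii) $\Leftrightarrow$ (iii) follow by composition through (i).

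The main obstacle is the compatibility of our two types of control on the Segre numbers---the componentwise bound $c_i(I, R) \le c_i(J, R)$ coming from (a), (b), and (ii) or (iii), and the merely lexicographic ordering from \autoref{cor_lex_upper_two_ideals}---which must interact precisely to force pointwise equality. The degenerate case $\dim(P) < d$, where (b) is vacuous and all polar multiplicities of $I$ vanish, would need to be handled separately, for instance by reducing to $R/(0:_RI^\infty)$ in which $I$ inherits positive height.
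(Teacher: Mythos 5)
Your proposal reproduces the paper's argument step for step. For (i) $\Rightarrow$ (ii), (iii) both you and the paper invoke the known preservation of polar multiplicities and Segre numbers under integral dependence (the paper cites \cite[Corollary 3.8]{Trung2001} and \cite[Theorem 4.2]{PTUV}); for the reverse implications both use \autoref{prop_decomp_div}(iii) under hypotheses (a) and (b) to convert the hypothesis (ii) or (iii) into the componentwise bound $c_i(I, R) \le c_i(J, R)$ (together with equality at the appropriate boundary index), and then conclude via the PTUV Segre-number criterion. The only difference is cosmetic: you make explicit the interaction between the componentwise upper bound and the lexicographic lower bound from \autoref{cor_lex_upper_two_ideals} in order to force pointwise equality of multiplicity sequences, whereas the paper compresses this into a direct citation of \cite[Theorem 4.2]{PTUV}; likewise, you flag the degenerate case $\dim(P) < d$ and the index-$0$ boundary ($c_0 = e(R) - m_0$), which the paper handles a bit more tersely, in particular writing out $c_0(I, R) = e(R) - m_0(I, R) = e(R) - m_0(J, R) = c_0(J, R)$ in the (ii) $\Rightarrow$ (i) step.
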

	\begin{proof}
		Let $\delta := o(I) = o(J)$ be the order of $I$ and $J$.
		
		(i) $\Rightarrow$ (ii) \& (iii):
		If $J$ is integral over $I$, then it is known that $m_i(I, R) = m_i(J, R)$ and $c_i(I, R) = c_i(J, R)$ (see \cite[Corollary 3.8]{Trung2001} and \cite[Theorem 4.2]{PTUV}). 
		This settles both implications (without assuming the conditions (a) and (b)). 
		
		(ii) $\Rightarrow$ (i): Suppose that $m_i(I, R) = m_i(J,R)$ for all $0 \le i \le d-1$.
		Then \autoref{prop_decomp_div} yields
		$$
		\nu_i(I, R) \;=\; \delta \cdot m_{i-1}(I, R) \;=\; \delta \cdot m_{i-1}(J, R) \;\le\; \nu_i(J, R) 
		$$
		for all $1 \le i \le d$.
		This implies that $c_i(I, R) = \nu_i(I, R) - m_i(I, R) \le \nu_i(J, R) - m_i(J, R) = c_i(J, R)$ for all $1 \le i \le d$.
		On the other hand, we have $c_0(I, R) = e(R) - m_0(I, R) = e(R) - m_0(J, R) = c_0(J, R)$.
		Therefore, $J$ is integral over $I$ due to \cite[Theorem 4.2]{PTUV}.
		
		(iii) $\Rightarrow$ (i):  Suppose that $\nu_i(I, R) = \nu_i(J,R)$ for all $0 \le i \le d$.
		From \autoref{prop_decomp_div}, we obtain
		$$
		\delta \cdot m_{i-1}(I, R) \;=\; \nu_i(I, R) \;=\; \nu_i(J, R) \;\ge\;   \delta \cdot m_{i-1}(J, R)
		$$
		for all $1 \le i \le d$.
		It follows that $c_i(I, R) = \nu_i(I, R) - m_i(I, R) \le \nu_i(J, R) - m_i(J, R) = c_i(J, R)$ for all $0 \le i \le d-1$.
		We also have $c_d(I, R) = \nu_d(I, R) = \nu_d(J, R) = c_d(J, R)$.
		Finally, $J$ is integral over $I$ by invoking \cite[Theorem 4.2]{PTUV} once again.
	\end{proof}
	
	The next remark complements \cite{Trung2001}*{Corollary 4.3}.
	\begin{remark}
		Let $R$ be a regular local ring of dimension at least two and let $I$ be an ideal of height one. Write $I=aH$ where $H$ is an ideal with $\HT (H)\ge 2$.
		Then $m_1(I, R)=o(H)$. 
	\end{remark}
	\begin{proof} We may assume that the residue field of $R$ is infinite. Let $g$ be a general element of $H$. According to \autoref{thm_polar_segre_cycles}(i), we have $m_1(I,R)=e(R/agR:_RI^{\infty}).$ 
		On the other hand, 
		$$
		agR:I^{\infty} \;=\; (agR:_RaH):_R (aH)^{\infty} \;=\; (gR:H):_R(aH)^{\infty} \;=\; (gR:_RH^{\infty}):_R(aR)^{\infty} \;=\; gR.
		$$
		Finally, $e(R/gR)=o(g)=o(H)$.
	\end{proof}
	
	The remark above shows that the polar multiplicities of  an ideal of height one in a two-dimensional regular local ring only depend on the order of the saturation of the ideal with respect to its unmixed part, and hence cannot detect integral dependence. 
	
	\begin{example}[Polar multiplicities do not detect integral dependence]
		\label{counter_polar}
		Let $R = \kk[x_0,x_1]$ be a polynomial ring over a field.
		Consider the ideals $I = (x_{0}^{2},\,x_{0}x_{1}^2) \,\subsetneq\, J = (x_{0}^{2},\,x_{0}x_{1})$. 
		We have the following table of values:
		\begin{equation*}
			\arraycolsep=8.2pt
			\begin{array}{l|r}
				\begin{array}{lll}
					c_0(I, R) = 0 & m_0(I, R) = 1 & \nu_0(I, R) = 1\\
					c_1(I, R) = 1 & m_1(I, R) = 1 & \nu_1(I, R) = 2 \\
					c_2(I, R) = 4 & m_2(I, R) =0 & \nu_2(I, R) = 4 \\
				\end{array}
				&
				\begin{array}{lll}
					c_0(J, R) = 0 & m_0(J, R) = 1 & \nu_0(J, R) = 1\\
					c_1(J, R) = 1 & m_1(J, R) = 1 & \nu_1(I, R) = 2 \\
					c_2(J, R) = 2 & m_2(J, R) =0 & \nu_2(I, R) = 2. \\
				\end{array}
			\end{array}			
		\end{equation*}
		Here the invariants $m_i$ fail to detect the fact that $\overline{I} \subsetneq \overline{J}$.
		However, both invariants $c_i$ and $\nu_i$ do detect this.
	\end{example}

	From the previous example, we see that the invariants $\nu_i$ seem to carry more information related to the integral closure of an ideal. 
	Nevertheless, we have the following more delicate example where the invariants $\nu_i$ fail to detect integral dependence. 
	
	\begin{example}[Polar-Segre multiplicities do not detect integral dependence]
		\label{counter_nu}
		Let $\PP_{\kk}^1 = \Proj(\kk[s,t]) \rightarrow \PP_{\kk}^3=\Proj(\kk[x_0,\ldots,x_3])$, $(s:t) \mapsto (s^4: s^3t :st^3:t^4)$ be the parametrization of the rational quartic $\mathcal{C} \subset \PP_{\kk}^3$.
		Let $R = \kk[x_0,\ldots,x_3] / \fP$ be the homogeneous coordinate ring of $\mathcal{C}$.
		Consider the ideals $I = (x_{0}^{2},\,x_{0}x_{2}x_{3},\,x_{1}^{2}x_{3}^{2})R \,\subsetneq\, J = (x_{0}^{2},\,x_{0}x_{2},\,x_{0}x_{3},\,x_{1}x_{3}^{2})R$.
		We get the following table of values:
		\begin{equation*}
			\arraycolsep=8.2pt
			\begin{array}{l|r}
				\begin{array}{lll}
					c_0(I, R) = 0 & m_0(I, R) = 4 & \nu_0(I, R) = 4\\
					c_1(I, R) = 5 & m_1(I, R) = 3 & \nu_1(I, R) = 8 \\
					c_2(I, R) = 14 & m_2(I, R) =0 & \nu_2(I, R) = 14 \\
				\end{array}
				&
				\begin{array}{lll}
					c_0(J, R) = 0 & m_0(J, R) = 4 & \nu_0(J, R) = 4\\
					c_1(J, R) = 3 & m_1(J, R) = 5 & \nu_1(I, R) = 8 \\
					c_2(J, R) = 14 & m_2(J, R) =0 & \nu_2(I, R) = 14. \\
				\end{array}
			\end{array}			
		\end{equation*}
		Therefore the invariants $\nu_i$ fail to detect the fact that $\overline{I} \subsetneq \overline{J}$.
	\end{example}

	\subsection{Lexicographic upper semicontinuity of Segre numbers}\hfill\\
	\label{subsect_lex}
	
	In this subsection, we show that Segre numbers have a lexicographic upper semicontinuous behavior in families. 
	This generalizes a result due to Gaffney and Gassler \cite{GG}.
	We now use the following setup. 
	
	\begin{setup}
		\label{setup_upper_semicont}
		Let $\iota : A \hookrightarrow R$ be a flat injective homomorphism of finite type of Noetherian rings.
		Suppose that the inclusion $\iota$ has a section; i.e., there is a homomorphism $\pi : R \surjects A$ such that $\pi \circ \iota = {\rm id}_A$.
		Consider the ideal $Q := \Ker(\pi) \subset R$.
	\end{setup}

	Notice that given prime $\pp \in \Spec(A)$, we always have that $QR(\pp)$ is a prime ideal in the fiber $R(\pp)$.
	Indeed, we get the inclusion $\iota_\pp : A/\pp \hookrightarrow R/\pp R$ with section $\pi_\pp : R/\pp R \surjects A/\pp$, and since $\Ker(\pi_\pp) = {Q\cdot R/\pp R}$, it follows that ${Q\cdot R/\pp R}$ is a prime ideal satisfying $\left(R/\pp R\right)_{Q\cdot R/\pp R} = R(\pp)_{QR(\pp)}$.
	Moreover ${QR(\pp)}$ is a maximal ideal in $R(\pp)$.
	Thus, although the fiber $R(\pp)$ might not be a local ring,  we do get a distinguished local ring $R(\pp)_{QR(\pp)}$.
	To simplify notation, we say that $S(\pp) := R(\pp)_{QR(\pp)}$ is the \emph{distinguished fiber} of $\pp$.
	
	Our main result regrading the behavior of Segre numbers in families is the following.	
	
	\begin{theorem}
		\label{thm_Segre_lex_upper}
		Assume \autoref{setup_upper_semicont} and let $I \subset R$ be an ideal. 
		Assume that for all $\pp \in \Spec(A)$, the fibers $R(\pp)$ are equidimensional of the same dimension $d$ and
		$\HT(I(\pp)) > 0$.
		Then the function 
		$$
		\pp \in \Spec(A) \;\mapsto\; \big(c_1\left(I, S(\pp)\right), c_2\left(I, S(\pp)\right), \ldots, c_d\left(I, S(\pp)\right)\big) \in \ZZ_{\ge 0}^d 
		$$
		is upper semicontinuous with respect to the lexicographic order.
	\end{theorem}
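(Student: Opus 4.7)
The plan is to invoke the topological Nagata criterion (\autoref{rem_top_Nagata}) on the sublevel sets
\[
U_v \;:=\; \Big\{\pp \in \Spec(A) \;\Big|\; \bigl(c_1(I, S(\pp)), \ldots, c_d(I, S(\pp))\bigr) <_{\rm lex} v\Big\}, \qquad v \in \ZZ_{\ge 0}^d,
\]
and show that each $U_v$ is open. Condition (i) (generic constancy) I handle as in the proofs of \autoref{thm_mixed_mult_fib} and \autoref{thm_specialization_rat_map}: fixing $\qqq \in \Spec(A)$, I replace $A$ by $A/\qqq$ and $R$ by $R/\qqq R$ to assume $A$ is a domain with $\qqq=0$, and then apply Grothendieck's Generic Freeness Lemma to the double associated graded algebra attached to the localization $N := R_{Q^*}$ at $Q^* := Q + \pp R$. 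This produces a single nonzero $a \in A$ such that the entire vector $\bigl(c_i(I, S(\pp))\bigr)_{i=1}^d$ is constant across $D(a) \subseteq V(\qqq)$.

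Condition (ii) asks that $\bigl(c_i(I, S(\qqq))\bigr)_{i=1}^d \le_{\rm lex} \bigl(c_i(I, S(\pp))\bigr)_{i=1}^d$ whenever $\qqq \subseteq \pp$. Localizing at $\pp$ and quotienting by $\qqq$, I assume $A$ is a local Noetherian domain with maximal ideal $\pp$ and $\qqq=0$. Iterating the inequality along a saturated chain of primes from $0$ to $\pp$, and then passing to a faithfully flat DVR extension of $A$ (base-changing $R$ correspondingly while preserving the equidimensionality of fibers and the positivity of $\HT(I(\bullet))$), I reduce to $A$ being a DVR with uniformizer $t$. The central object is then $N := R_{Q^*}$: a Noetherian local ring, flat over $A$, of dimension $d+1$, with
\[
N/tN \;\cong\; S(\pp) \qquad\text{and}\qquad N_{QN} \;\cong\; S\bigl((0)\bigr),
\]
where the second identification rests on the observation that $A \setminus \{0\}$ injects into the complement of $QN$ in $N$ (note $t \notin QN$ since $A \cap Q = 0$). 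Under the standing hypotheses, $N$ is equidimensional and catenary, $\dim(N/tN)=d$, and $\HT\bigl(IN,\, 0:_N (IN)^\infty,\, t\bigr) \ge 2$. Applying \autoref{cor_lex_order} to the triple $(N, IN, t)$ then gives
\[
\bigl(c_i(I, S(\pp))\bigr)_{i=1}^d \;=\; \bigl(c_i(IN, N/tN)\bigr)_{i=1}^d \;\ge_{\rm lex}\; \bigl(c_i(IN, N)\bigr)_{i=1}^d,
\]
while \cite[Proposition 2.7]{PTUV} applied to the localization $N \to N_{QN} = S((0))$---valid because the hypothesis $\HT(IS((0))) > 0$ forces $IN \subseteq QN$---yields the componentwise, hence lexicographic, inequality $c_i(IN, N) \ge c_i(I, S((0)))$. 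Chaining these delivers the desired bound.

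The main obstacle is entirely in condition (ii). Selecting $N = R_{Q^*}$ rather than the naive $R_Q$ is forced on us: because $A \setminus \{0\}$ does not lie in the complement of $Q$ once $\pp \ne 0$, only the enlargement $Q^* = Q + \pp R$ yields a local ring whose special fiber recovers $S(\pp)$ and whose localization at $QN$ recovers $S((0))$. The most delicate step is verifying the height inequality $\HT(IN, 0:_N (IN)^\infty, t) \ge 2$, which must be traced back to the positivity of $\HT(I(\pp'))$ in both the generic and the special fiber. Finally, one must confirm that equidimensionality, the catenary property, and these height hypotheses all survive the reduction to a DVR base---potentially via a completion or normalization---while ensuring the base-changed ring $R$ continues to satisfy the standing assumptions of the theorem.
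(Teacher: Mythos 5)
Your proposal follows the same overall strategy as the paper's proof: apply the topological Nagata criterion, handle generic constancy via Grothendieck's Generic Freeness Lemma, and handle specialization by introducing a DVR dominating the local base and then chaining \autoref{cor_lex_order} with \cite[Proposition 2.7]{PTUV}. The object you call $N = R_{Q^*}$ with $Q^* = Q + \pp R$ is, after the DVR reduction, precisely the paper's $T = (R_V)_{(t,Q)R_V}$, and the two-step inequality $c_i(I, N/tN) \ge_{\rm lex} c_i(I, N) \ge c_i(I, N_{QN})$ is exactly the paper's chain. So the mechanism is identified correctly.

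However, there is a genuine gap in how you justify the reduction to a DVR base. You assert that you "pass to a faithfully flat DVR extension of $A$" and then reduce to $A$ being a DVR. A discrete valuation ring $V$ of $K = \Quot(A)$ dominating a local Noetherian domain $A$ need \emph{not} be flat over $A$, let alone faithfully flat. (Already for a one-dimensional non-normal $A$ such as $\kk[[t^2,t^3]]$, the normalization $\kk[[t]]$ is not flat; and for $\dim A \ge 2$ even this fails more dramatically.) So you cannot descend along $A \to V$ by faithful flatness, and the chained intermediate claim $N/tN \cong S(\pp)$ is also literally false in general because the residue field jumps from $\kappa(\pp)$ to $\kappa(\nnn)$. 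The paper does not reduce to a DVR base: it keeps the original $A$, introduces the auxiliary $V$ and $R_V = R\otimes_A V$, and observes that
$$
\gr_Q\!\big(\gr_I(R_V(\nnn))\big) \;\cong\; \gr_Q\!\big(\gr_I(R(\pp))\big) \otimes_{\kappa(\pp)} \kappa(\nnn),
$$
and since Segre numbers are read off from Hilbert functions of these bigraded algebras, which are insensitive to field extension of the residue field, one obtains the numerical equality $c_i(I, T/tT) = c_i(I, S(\pp))$ without any isomorphism of local rings and without any descent. You should replace the faithful-flatness claim with this Hilbert-function argument. Likewise, your "iterating along a saturated chain of primes from $0$ to $\pp$" is unnecessary once you invoke a DVR dominating a local domain of arbitrary dimension, which exists by \cite[Exercise II.4.11]{HARTSHORNE}. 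Finally, in condition (i) the reference to a localization $N = R_{Q^*}$ at $Q^* = Q + \pp R$ is out of place: there is no fixed $\pp$ there, and Generic Freeness must be applied to the finitely generated $A$-algebra $\gr_Q(\gr_I(R))$ itself, not to a localization (which is no longer of finite type over $A$).
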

	\begin{proof}
		By the topological Nagata criterion (see \autoref{rem_top_Nagata}), it suffices to show the following two conditions: 
		\begin{enumerate}[\rm (i)]
			\item Under the assumption that $A$ is a domain, there is a dense open subset $U \subset \Spec(A)$ where the above function is constant. 			
			\item If $\pp \supset \qqq$, then $\big(c_1(I, S(\pp)),\ldots,S(\pp)\big) \ge_{\rm lex} \big(c_1(I, S(\qqq)),\ldots,c_d(I, S(\qqq))\big)$.
		\end{enumerate}
		
		First, we prove the claim (i).
		We assume $A$ is a domain, thus by Grothendieck's Generic Freeness Lemma (see, e.g., \cite[Theorem 24.1]{MATSUMURA}, \cite[Theorem 14.4]{EISEN_COMM}) there is an element $0 \neq a \in A$ such that the bigraded components of $\gr_{Q}(\gr_I(R)) \otimes_A A_a$ and the graded components of $\gr_I(R) \otimes_A A_a$ are free $A_a$-modules.
		Set $U := D(a) \subset \Spec(A)$.
		Then, for any $\pp \in U$, we have the isomorphisms 
		$$
		\gr_{Q}(\gr_I(R)) \otimes_A \kappa(\pp) \;\cong\; \gr_{Q}(\gr_I(R) \otimes_A \kappa(\pp)) \;\cong\; \gr_{Q}(\gr_I(R(\pp))).
		$$
		This shows shows that the above function is constant on $U$.
		
		Next, we prove the claim (ii).
		Notice that we can reduce modulo $\qqq$ and localize at $\pp$.
		Thus we assume $A$ is a local domain with maximal ideal $\pp$ and $\qqq = 0$.
		Let $K := \Quot(A)$.
		By \cite[Exercise II.4.11]{HARTSHORNE} or \cite[Proposition 7.1.7]{EGAII}, there is a discrete valuation ring $V$ of $K$ that dominates $A$; that is, $A \subset V$ and $\pp  = \mathfrak{n} \cap A$ where $\mathfrak{n} = (t) \subset V$ is the closed point of $\Spec(V)$. 
		Let $R_V := R \otimes_A V$.
		Notice that  $\iota' : V \hookrightarrow R_V$ is flat and has a section $\pi' : R_V \surjects V$ with $\Ker(\pi') = QR_V$.
		We get a field extension $\kappa(\pp) \hookrightarrow \kappa(\nnn)$ and the following isomorphisms
		$$
		R_V(\nnn) \;\cong\; (R \otimes_A V) \otimes_V \kappa(\nnn) \;\cong\; R \otimes_A \kappa(\nnn) \;\cong\; (R \otimes_A \kappa(\pp)) \otimes_{\kappa(\pp)} \kappa(\nnn) \;\cong\; R(\pp) \otimes_{\kappa(\pp)} \kappa(\nnn).
		$$
		This gives the isomorphisms $I^vR_V(\nnn) \cong  I^vR(\pp) \otimes_{\kappa(\pp)} \kappa(\nnn)$ and so it follows		
		\begin{equation}
			\label{eq_gr_base_change}
			\gr_Q\left(\gr_I\left(R_V(\nnn)\right)\right) \;\cong\; \gr_Q\left(\gr_I\left(R(\pp)\right)\right) \otimes_{\kappa(\pp)} \kappa(\nnn).
		\end{equation}

		Let $T:={(R_V)}_{(t, Q)R_V}$.
		From \autoref{eq_gr_base_change}, we have 
		\begin{equation}
			\label{eq_first_c_i}
			c_i(I, S(\pp)) \;=\; c_i(I, T/tT).
		\end{equation}
		Since $T_{QT} = R(\qqq)_{QR(\qqq)} = S(\qqq)$, we also obtain 
		\begin{equation}
			\label{eq_second_c_i}
			c_i(I, S(\qqq)) \;=\; c_i(I, T_{QT}).
		\end{equation}
		
		To use \autoref{cor_lex_order}, we now verify that the ideal $IT \subset T$ satisfies the required assumptions. 
		By applying \autoref{lem_properties_T}(i) to the ring $R(\pp)$, we conclude that the fibers of $R_V$ over $V$ are equidimensional of dimension $d$.
		Hence due to \autoref{lem_properties_T}(ii), $T$ is equidimensional of dimension $d+1$ and universally catenary and $\dim(T/tT) = \dim(T)-1$.
		Since $R(\pp) \hookrightarrow T/tT$ is a flat  homomorphism, $\HT((I,t)T/tT) \ge \HT(I(\pp)) \ge 1$, hence $\HT(t, IT) \ge 2$ because $t$ is regular on $T$.
		Now  \autoref{eq_first_c_i} and \autoref{cor_lex_order} yield
		$$
		\big(c_1(I, S(\pp)), \ldots, c_d(I, S(\pp))\big) \;=\; \big(c_1(I, T/tT), \ldots, c_d(I, T/tT)\big) \;\ge_{\rm lex}\; \big(c_1(I, T), \ldots, c_d(I, T)\big).
		$$
		
		Finally, from \cite[Proposition 2.7]{PTUV} and \autoref{eq_second_c_i}, we get 
		$$
		\big(c_1(I, T), \ldots, c_d(I, T)\big) \;\ge\; \big(c_1(I, T_{QT}), \ldots, c_d(I, T_{QT})\big) \;=\; \big(c_1(I, S(\qqq)), \ldots, c_d(I, S(\qqq))\big).
		$$
		This concludes the proof of the theorem.
	\end{proof}

	\begin{remark}
		A basic situation where the assumptions of \autoref{setup_upper_semicont} hold is the following.
		Assume $R=A[x_1,\ldots,x_d]$ is a positively graded polynomial ring over $A$, and let $\mm = (x_1,\ldots,x_d)$ be the graded irrelevant ideal.
		In this case, the natural projection $\pi : R \surjects A \cong R/\mm$ is a section of the inclusion $\iota : A \rightarrow R$. 
		Therefore, given a prime $\pp \in \Spec(A)$, the distinguished fiber of $\pp$ is the localization $R(\pp)_{\mm R(\pp)}$ of the fiber $R(\pp)\cong\kappa(\pp)[x_1,\ldots,x_d]$ at the graded irrelevant ideal $\mm R(\pp)$.
	\end{remark}

	The following technical lemma was used in the proof of \autoref{thm_Segre_lex_upper}.	
	
	\begin{lemma}
		\label{lem_properties_T}
		The following statements hold: 
		\begin{enumerate}[\rm (i)]
			\item Let $B$ be a finitely generated algebra over a field $\kk$.
			If $B$ is equidimensional, then $B \otimes_\kk L$ is equidimensional of the same dimension for any field extension $L$ of\, $\kk$.
			\item Let $V$ be a discrete valuation ring with uniformizing parameter $t$.
			Let $j : V \hookrightarrow B$ be a flat injective homomorphism of finite type that has a section $\tau : B \surjects V$, and let $\mathfrak{Q} = \Ker(\tau)$.
			Let $T = B_{(t, \mathfrak{Q})}$.
			Suppose that the fibers of $j$ are equidimensional of dimension $e$. 		
			Then $T$ is equidimensional of dimension $e+1$ and universally catenary, and $\dim(T/tT) = e$.
		\end{enumerate}
	\end{lemma}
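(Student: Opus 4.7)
The plan is to prove the two parts separately, noting that part (ii) uses part (i) only implicitly, through the hypothesis that the closed fiber of $j$ is equidimensional. For part (i), I would reduce to the domain case via minimal primes: every minimal prime of $B \otimes_\kk L$ contains the nilradical, hence contains $\mathfrak{p}_i(B \otimes_\kk L)$ for some minimal prime $\mathfrak{p}_i$ of $B$, and so arises as a minimal prime of $(B/\mathfrak{p}_i) \otimes_\kk L$. Since $B/\mathfrak{p}_i$ is a finitely generated $\kk$-algebra domain of the common dimension $d = \dim(B)$, Noether normalization yields a finite injection $\kk[y_1, \ldots, y_d] \hookrightarrow B/\mathfrak{p}_i$ that base-changes to a finite injection $L[y_1, \ldots, y_d] \hookrightarrow (B/\mathfrak{p}_i) \otimes_\kk L$. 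Going-down for this integral extension over the domain $L[y]$ forces each minimal prime $\mathfrak{q}$ of the target to contract to zero, so $L[y] \hookrightarrow ((B/\mathfrak{p}_i) \otimes_\kk L)/\mathfrak{q}$ is a finite extension of domains and hence $\dim(((B/\mathfrak{p}_i) \otimes_\kk L)/\mathfrak{q}) = d$.

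For part (ii) I would first dispatch the structural statements. Universal catenarity of $T$ is immediate: $V$ is regular and hence universally catenary, so the finite-type $V$-algebra $B$ and its localization $T$ are universally catenary. For $\dim(T/tT)$, observe that $T/tT = (B/tB)_{\mathfrak{Q}(B/tB)}$, where $B/tB$ is the closed fiber of $j$ (equidimensional of dimension $e$ over $\kappa = V/tV$ by hypothesis) and $\mathfrak{Q}(B/tB)$ is maximal because $(B/tB)/\mathfrak{Q}(B/tB) = V/tV = \kappa$ is a field; localizing an equidimensional finitely generated algebra over a field at a maximal ideal preserves the dimension (via Noether normalization applied to each minimal prime), so $\dim(T/tT) = e$. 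The flat local dimension formula applied to $V \to T$ then yields $\dim(T) = \dim(V) + \dim(T/tT) = 1 + e = e + 1$.

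The main obstacle is the equidimensionality of $T$, which requires controlling each minimal prime. Let $\mathfrak{p}T$ be a minimal prime of $T$, coming from a minimal prime $\mathfrak{p}$ of $B$ contained in $(t, \mathfrak{Q})$. Since $V \to B$ is flat and $V$ is a domain, going-down forces $\mathfrak{p} \cap V = 0$, so $B/\mathfrak{p}$ is a domain of finite type over $V$. The image of $\mathfrak{p}$ in $B \otimes_V K$ (with $K = \Quot(V)$) is a minimal prime because $t \notin \mathfrak{p}$, and since $B \otimes_V K$ is equidimensional of dimension $e$ by hypothesis, we get $\trdeg_K(\Quot(B/\mathfrak{p})) = \dim((B/\mathfrak{p}) \otimes_V K) = e$. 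As $\mathfrak{Q} \cap V = 0$ while $t \in (t, \mathfrak{Q}) \cap V$, the maximal ideal $(t, \mathfrak{Q})/\mathfrak{p}$ of $B/\mathfrak{p}$ contracts to $(t) \subset V$ with residue field $\kappa$. The altitude formula for the universally catenary domain $V$, applied to the finitely generated $V$-domain $B/\mathfrak{p}$, then gives $\HT((t, \mathfrak{Q})/\mathfrak{p}) = \HT((t)) + \trdeg_K(\Quot(B/\mathfrak{p})) - \trdeg_\kappa(\kappa) = 1 + e - 0 = e + 1$, so $\dim(T/\mathfrak{p}T) = e + 1$, yielding the desired equidimensionality.
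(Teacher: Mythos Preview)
Your argument for part (ii) is correct and is essentially the paper's proof: both identify the minimal primes of $T$ with minimal primes of the generic fiber $B \otimes_V K$ via flatness, and then apply the dimension/altitude formula to $V \hookrightarrow B/\mathfrak{p}$. The only cosmetic difference is the order in which $\dim T$ and $\dim(T/tT)$ are obtained: you compute $\dim(T/tT)=e$ directly from the closed fiber and then deduce $\dim T=e+1$ by the flat local dimension formula, whereas the paper first obtains equidimensionality of $T$ of dimension $e+1$ and then gets $\dim(T/tT)=e$ from $t$ being a nonzerodivisor.

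For part (i) your route differs from the paper's. The paper invokes the decomposition $\Ass(B\otimes_\kk L)=\bigcup_{\mathfrak{p}\in\Ass(B)}\Ass\big((B/\mathfrak{p})\otimes_\kk L\big)$ and then cites the standard fact that a field extension preserves the dimension of an integral $\kk$-scheme of finite type. Your Noether-normalization argument is a legitimate alternative and more self-contained, but the justification ``going-down for this integral extension over the domain $L[y]$'' is imprecise as stated: the Cohen--Seidenberg going-down theorem requires the base to be \emph{normal} (which $L[y]$ is) and the extension ring to be a domain, or at least torsion-free over the base, and $(B/\mathfrak{p}_i)\otimes_\kk L$ is typically not a domain. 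The fix is immediate: since $B/\mathfrak{p}_i$ is a domain finite over $\kk[y]$, it embeds in $\Quot(B/\mathfrak{p}_i)\otimes_{\kk(y)} L(y)$ after tensoring with $L$, so $(B/\mathfrak{p}_i)\otimes_\kk L$ is torsion-free over $L[y]$; hence nonzero elements of $L[y]$ are nonzerodivisors and cannot lie in any minimal prime. With this clarification your proof of (i) goes through.
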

	\begin{proof}
		(i) By \cite[Theorem 23.2]{MATSUMURA}, we have $\Ass\left(B\otimes_\kk L\right) = \bigcup_{\pp \in \Ass(B)} \Ass\left(B/\pp \otimes_\kk L\right)$.
		The result follows because each minimal prime of $B/\pp \otimes_\kk \LL$ has the same dimension as $B/\pp$ (see \cite[Exercise II.3.20(f)]{HARTSHORNE}).
		
		(ii)  
		Let $K = \Quot(V)$.
		Since $B$ is $V$-flat, it follows that $t$ is a nonzerodivisor on $B$.
		Let $\fP \in \Spec(B)$ be a minimal prime contained in $(t,\mathfrak{Q})$.
		Such $\fP$ does not contain $t$, hence it corresponds to a minimal in the fiber $B \otimes_V K$.
		By applying the dimension formula \cite[Theorems 15.5, 15.6]{MATSUMURA} to the inclusion $V \hookrightarrow B/\fP$, we obtain 
		$$
		\dim\left((B/\fP)_{(t, \mathfrak{Q})}\right) \;=\; \dim(V) + \trdeg_{V}(B/\fP) = 1 + e.
		$$	
		This implies that $T$ is equidimensional of dimension $e+1$.
		As $T$ is finitely generated over a discrete valuation ring, it is universally catenary.
		Observe that $\dim(T/tT) = e$ since $t$ is regular on $T$.
	\end{proof}

	\subsection{The case of equigenerated ideals}\hfill\\
	\label{subsect_equigen}
	
	This brief subsection is dedicated to the family of equigenerated ideals, where our results are particularly satisfying.

	\begin{corollary}
		\label{cor_equigen}
		Let $\kk$ be a field, $R$ be a finitely generated standard graded $\kk$-algebra of dimension $d$ and $\mm = R_+$ be the graded irrelevant ideal.
		Let $I \subset R$ be a homogeneous ideal generated by elements of the same degree $\delta\ge 1$.
		Then the following statements hold: 
		\begin{enumerate}[\rm (1)]
			\item $\delta \cdot  m_{i-1}(I, R) = \nu_i(I, R)$ for all $1 \le i \le d$. 
			\item For any homogeneous ideal $J \subset R$ containing $I$ such that $o(J)=\delta$, the following are equivalent: 
			\begin{enumerate}[\rm (i)]
				\item $J$ is integral over $I$.
				\item $m_i(I, R) = m_i(J,R)$ for all $0 \le i \le d-1$.
				\item $\nu_i(I, R) = \nu_i(J, R)$ for all $0 \le i \le d$.
			\end{enumerate}
		\end{enumerate} 
	\end{corollary}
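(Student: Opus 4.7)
The plan is to deduce both statements from the machinery already developed. For part (1), I would use the explicit formulas from \autoref{rem_formulas} combined with a standard Bézout-type multiplicity computation; for part (2), part (1) combined with \autoref{prop_decomp_div}(iii) reduces everything to \autoref{thm_new_crit_int_dep}. As a preliminary step, I would localize at $\mm$ and, if necessary, extend the residue field via a faithfully flat base change, so that the setup of \autoref{setup_polar_Segre} and \autoref{setup_segre_cycles} applies without loss of generality. The ring $R_\mm$ is universally catenary as the localization of a finitely generated $\kk$-algebra, and $o(IR_\mm) = \delta$ since $I$ is generated by elements of degree $\delta$.

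For part (1), by \autoref{rem_formulas}(ii) and (iii) we have $m_{i-1}(I, R) = e_{d-i+1}(R/L)$ and $\nu_i(I, R) = e_{d-i}(R/(L + g_iR))$, where $L := (g_1, \ldots, g_{i-1}) :_R I^\infty$ and $g_1, \ldots, g_d$ is a sequence of general elements in $I$. Because $I$ is equigenerated in degree $\delta$, I can take each $g_j$ to be a generic $\kk$-linear combination of the given generators of $I$, hence a homogeneous element of degree $\delta$; in particular $L$ is a homogeneous ideal. By the very definition of saturation, no associated prime of $R/L$ contains $I$, so the general element $g_i \in I$ avoids all associated primes of $R/L$ and is therefore a nonzerodivisor on $R/L$. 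The short exact sequence of graded modules
$$
0 \longrightarrow (R/L)(-\delta) \xrightarrow{\;\cdot g_i\;} R/L \longrightarrow R/(L + g_iR) \longrightarrow 0
$$
gives $\HS_{R/(L+g_iR)}(t) = (1 - t^\delta) \HS_{R/L}(t)$, and extracting leading coefficients yields $e_{d-i}(R/(L + g_iR)) = \delta \cdot e_{d-i+1}(R/L)$, that is, $\nu_i(I, R) = \delta \cdot m_{i-1}(I, R)$, as desired.

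For part (2), part (1) combined with the equality clause of \autoref{prop_decomp_div}(iii) immediately forces $I$ to satisfy the $\sG$-parameter condition generically. Since $o(I) = o(J) = \delta$ by assumption, both hypotheses of \autoref{thm_new_crit_int_dep} are satisfied, and the equivalence (i) $\Leftrightarrow$ (ii) $\Leftrightarrow$ (iii) follows immediately.

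The only conceptual step is verifying that $g_i$ is a nonzerodivisor on $R/L$; once that is in hand, the rest is Hilbert-series bookkeeping plus an invocation of existing theorems. I do not anticipate any genuine obstacle beyond this.
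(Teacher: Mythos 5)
Your proof is correct, and for part (1) it takes a genuinely different route from either of the paper's two proofs. The paper's first proof verifies the $\sG$-parameter condition directly by arguing on the bigraded algebra $\gr_\mm(\Rees(I)) \cong \Rees(I)$ and then deduces part (1) from \autoref{prop_decomp_div}(iii); its second proof works with the bigraded short exact sequence $0 \to \Rees(I)(1,-\delta) \to \Rees(I) \to \gr_I(R) \to 0$ and the bivariate Hilbert functions that define polar multiplicities. You instead stay entirely at the level of the singly graded ring $R$: you invoke the cycle formulas of \autoref{rem_formulas}, observe that $L = (g_1,\ldots,g_{i-1}):_R I^\infty$ has $\HH_I^0(R/L)=0$ so that a general degree-$\delta$ element $g_i \in I$ is a nonzerodivisor on $R/L$, and read off $\nu_i = \delta\, m_{i-1}$ from the Hilbert-series relation $\HS_{R/(L+g_iR)}(t) = (1-t^\delta)\HS_{R/L}(t)$. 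This is more elementary (no Rees-algebra bigrading) at the cost of being a little less structural: the paper's proofs make transparent \emph{why} the $\sG$-parameter condition holds, whereas your argument establishes the numerical identity first and only then infers the $\sG$-parameter condition from the equality clause of \autoref{prop_decomp_div}(iii). For part (2) your logic coincides with the paper's (condition (a) of \autoref{thm_new_crit_int_dep} from $o(I)=o(J)=\delta$, condition (b) from part (1) via \autoref{prop_decomp_div}(iii), then apply the theorem). One small point worth stating explicitly: you also need, when passing to the localization $R_\mm$, that the local Hilbert--Samuel multiplicity $e_{d-i}(\,\cdot\,)$ used in \autoref{rem_formulas} agrees with the graded multiplicity $h(1)$ you extract from the Hilbert series; this is standard but is the bridge between \autoref{rem_formulas} (stated in the local setting of \autoref{setup_polar_Segre}) and your graded computation.
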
	
	\begin{proof}[First proof.]
		We may assume that $\kk$ is infinite. 
		Notice that part (1) implies part (2) (indeed, by \autoref{prop_decomp_div}(iii), the statement (1) is equivalent to the condition (b) of \autoref{thm_new_crit_int_dep}).
		Since $I$ is equigenerated in degree $\delta$, we see the Rees algebra $\Rees(I)$ as a standard bigraded algebra with $\big[\Rees(I)\big]_{(v, n)} = \big[I^v\big]_{n+v\delta}$.
		Thus Nakayama's lemma yields the isomorphisms 
		$$
		\big[\gr_\mm\left(\Rees(I)\right)\big]_{(v, n)} \;=\; \frac{\mm^nI^v}{\mm^{n+1}I^v} \;\cong\; \big[I^v\big]_{n+v\delta} \;=\; \big[\Rees(I)\big]_{(v, n)}.
		$$
		Let $g$ be a general element of $I$.
		Since $\left[\left(0:_{\Rees(I)}g\right)\right]_{(v, *)}=0$ for $v \gg 0$ (see \autoref{rem_pullback_g}), it follows that $\left[\left(0:_{\gr_\mm(\Rees(I))}\iniTerm(g)\right)\right]_{(v, *)}=0$ for $v \gg 0$.
		The result follows from \autoref{prop_decomp_div}(iii).
	\end{proof}
	\begin{proof}[Second proof.]
		Again, we see $\Rees(I)$ as a standard bigraded algebra with $\big[\Rees(I)\big]_{(v, n)} = \big[I^v\big]_{n+v\delta}$.
		Therefore $\gr_I(I) \cong \Rees(I)/I\Rees(I)$ is also naturally a standard bigraded algebra.
		Notice that we have a short exact sequence 
		$$
		0 \rightarrow \Rees(I)(1,-\delta) \rightarrow \Rees(I) \rightarrow \gr_I(R) \rightarrow 0.
		$$
		Then a basic computation with the polynomial functions $P_{\Rees(I)}(v, n) = \sum_{k=0}^n\dim_{\kk}\left([\Rees(I)]_{(v,k)}\right)$ and $P_{\gr_I(R)}(v, n) = \sum_{k=0}^n\dim_{\kk}\left([\gr_I(R)]_{(v,k)}\right)$ gives the equality 
		$$
		\delta \cdot  m_{i-1}(I, R) \;=\; m_i(I, R) + c_i(I, R) \;=\; \nu_i(I, R)
		$$ 
		for all $1 \le i \le d$ (see \cite[Remark 2.9]{cidruiz2024polar}). 
	\end{proof}

	As a consequence, in the equigenerated case, we get an alternative proof of the lexicographic upper semicontinuity of Segre numbers (see \autoref{thm_Segre_lex_upper}).

	\begin{remark}
		Let $A$ be a Noetherian domain and $R = A[x_0,\ldots,x_r]$ be a standard graded polynomial ring. 
		Let $\FF :  \PP_A^r \dashrightarrow \PP_A^s$ be a rational map with representative $\mathbf{f} = (f_0:\cdots:f_s)$ such that $f_0,\ldots,f_s$ are homogeneous elements of degree $\delta > 0$.
		Let $I = (f_0,\ldots,f_s) \subset R$ and assume that $I(\pp) \neq 0$ for all $\pp \in \Spec(A)$.
		Notice that $d_i(\FF(\pp)) = m_{r-i}(I, R(\pp))$ for all $0 \le i \le r$ and $\pp \in \Spec(A)$.
		By \autoref{thm_specialization_rat_map}, each function $\pp \mapsto m_i(I, R(\pp))$ is lower semicontinuous. 
		Notice that we always have $m_0(I, R(\pp)) = 1$.
		Therefore, the equality $c_i(I, R(\pp)) = \delta\cdot m_{i-1}(I, R(\pp)) - m_i(I, R(\pp))$ implies that the function 
		$$
		\pp \in \Spec(A) \;\mapsto\; \big(c_1\left(I, R(\pp)\right), c_2\left(I, R(\pp)\right), \ldots, c_{r+1}\left(I, R(\pp)\right)\big) \in \ZZ_{\ge 0}^{r+1} 
		$$
		is upper semicontinuous with respect to the lexicographic order.
	\end{remark}

	\section*{Acknowledgments}
	
	The authors were partially supported by NSF grant DMS-1928930 and Alfred P. Sloan Foundation G-2021-16778 while in residence at SLMath.

	\bibliography{references}

\end{document}